 \newcommand{\changed}[1]{#1}
\newtheorem{teo}[equation]{Theorem}
\newtheorem{defin}[equation]{Definition}
\newtheorem{remark}[equation]{Remark}
\newtheorem{prop}[equation]{Proposition}
\newtheorem{cor}[equation]{Corollary}
\newtheorem{lemma}[equation]{Lemma}
\newtheorem{Example}[equation]{Example}
\newcommand{\Keler}             {K\"{a}hler }
\newcommand{\lds}{\ldots}
\newcommand{\cds}{\cdots}
\newcommand{\cd}{\cdot}
\renewcommand{\setminus}{-}
\newcommand{\om}{\omega}
\renewcommand{\phi}{\varphi}
\newcommand{\Tr}{\operatorname{Tr}}
\newcommand{\ra}{\rightarrow}
\newcommand{\C}{\mathbb{C}}
\newcommand{\R}{\mathbb{R}}
\newcommand{\Lie}{\operatorname{Lie}}
\newcommand{\SU} {\operatorname{SU}}
\newcommand{\su} {\mathfrak{su}}
\newcommand{\Sl}{\operatorname{SL}}
\newcommand{\Gl}{\operatorname{GL}}
\newcommand{\gl}{\operatorname{\mathfrak{gl}}}
\newcommand{\restr}[1]          {\vert_{#1}}
\newcommand{\Ad}{\operatorname{Ad}}
\newcommand{\ad}{{\operatorname{ad}}}
\newcommand{\ga}{\gamma}
\newcommand{\meno}{^{-1}}
\newcommand{\Zeta}{{\mathbb{Z}}}
\newcommand{\PP}{\mathbb{P}}
\newcommand{\enf}{\emph}
\newcommand{\desudt}[1] []      {\dfrac {\mathrm {d} #1 }{\mathrm {dt}}}
\newcommand{\desudtzero}        {\desudt \bigg \vert _{t=0} }
\renewcommand{\root}{\Delta}
\newcommand{\conv} {\operatorname{conv}}
\newcommand{\liu}{\mathfrak{u}}
\newcommand{\lieh}{\mathfrak{h}}
\newcommand{\liek}{\mathfrak{k}}
\newcommand{\liel}{\mathfrak{l}}
\newcommand{\lieg}{\mathfrak{g}}
\newcommand{\lieb}{\mathfrak{b}}
\newcommand{\liep}{\mathfrak{p}}
\newcommand{\liez}{\mathfrak{z}}
\newcommand{\lies}{\mathfrak{s}}
\newcommand{\liet}{\mathfrak{t}}
\newcommand{\grass}{\operatorname{\mathbb{G}}}
\newcommand{\chern}{\operatorname{c}}
\newcommand{\im}{\operatorname{Im}}
\newcommand{\la}{\lambda}
\newcommand{\alfa}{\alpha}
\newcommand{\vacuo}{\emptyset}
\newcommand{\OO}{\mathcal{O}}               
\renewcommand{\c}{{\widehat{\OO}}}          
\newcommand{\cp}{{\widehat{\OO'}}}          
\newcommand{\polp}{{P}}                     
\newcommand{\ml}{\operatorname{Max}}        
\newcommand{\ext}{\operatorname{ext}}       
\newcommand{\sx}{\langle}                   
\newcommand{\xs}{\rangle}
\newcommand{\relint}{\operatorname{relint}} 
\newcommand{\Weyl}{W}                       
\newcommand{\cchamber}{\overline{C^+}}
\newcommand{\chamber}{{C^+}}
\newcommand{\Crit}{\operatorname{Crit}}     
\newcommand{\roots}{\Delta}                 
\newcommand{\simple}{\Pi}                   
\newcommand{\mom}{\Phi}                     
\newcommand{\momt}{\Phi_T}                  
\newcommand{\faces}{\mathscr{F}(\c)}       
\newcommand{\facesp}{\mathscr{F}(\polp)}    
\newcommand{\E}{{\mathcal{E}_F}}               
\newcommand{\Fo}{\relint F}
\newcommand{\V}{V}
\newcommand{\spam}{\operatorname{span}}
\newcommand{\hilbo}{\mathcal{H}}
\newcommand{\CF}{C_F}
\newcommand{\aff}{  \operatorname{aff}}
\newcommand{\diag}{\operatorname{diag}}
\newcommand{\SB}{\mathcal{S}_B}
\begin{document}

\title{Coadjoint orbitopes}

\author{Leonardo Biliotti}

\author{Alessandro Ghigi}

\author{Peter Heinzner}


\begin{abstract}
  We study \emph{coadjoint orbitopes}, i.e. convex hulls of coadjoint
  orbits of compact Lie groups. We show that up to conjugation the
  faces are completely determined by the geomety of the faces of the
  convex hull of Weyl group orbits.  We also consider the geometry of
  the faces and show that they are themselves coadjoint orbitopes.
  From the complex geometric point of view the sets of extreme points
  of a face are realized as compact orbits of parabolic subgroups of
  the complexified group.
\end{abstract}

  \address{Universit\`{a} di Parma} \email{leonardo.biliotti@unipr.it}
  \address{Universit\`a di Milano Bicocca}
  \email{alessandro.ghigi@unimib.it}

  \address{Ruhr Universit\"at Bochum} \email{peter.heinzner@rub.de}

  \thanks{The first author was partially supported by GNSAGA of INdAM.
    The second author was partially supported by GNSAGA of INdAM and
    by PRIN 2009 MIUR ''Moduli, strutture geometriche e loro
    applicazioni''. The third author was partially supported by
    DFG-priority program SPP 1388 (Darstellungstheorie)}

  \subjclass[2000]{22E46; 
    53D20 
  }

  \maketitle

\tableofcontents{}

\section*{Introduction}

Let $K$ be a compact Lie group and let $K \ra \Gl(V)$ be a
finite-dimensional representation.  An \emph{orbitope} is by
definition the convex envelope of an orbit of $K$ in $V$ (see
\cite{sanyal-sottile-sturmfels-orbitopes}).  An interesting class of
orbitopes is given by the convex envelope of coadjoint orbits.  We
call these \emph{coadjoint orbitopes}. The case of an integral orbit
has been studied in \cite{biliotti-ghigi-2}, where it was realised
that a remarkable construction introduced by Bourguignon, Li and Yau
\cite{bourguignon-li-yau} in the case of complex projective space can
be generalized to arbitrary flag manifolds.  This allowed to show that
the convex envelope of an integral coadjoint orbit is equivariantly
homeomorphic to a Satake-Furstenberg compactification.  This
homeomorphism is constructed by integrating the momentum map, but
unfortunately it is not explicit and its nature is not yet
well-understood. On the other hand, the Satake-Furstenberg
compactifications admit a very precise combinatorial description going
back to Satake \cite{satake-compactifications}.

The aim of this paper is to give a precise description of
the boundary structure of coadjoint orbitopes without the integrality
assumption and without relying on the connection with
Satake-Furstenberg compactifications.

To a coadjoint orbit $\OO$ we associate its convex hull $\c$. The aim
is to describe the \emph{faces} of $\c$ and their extremal points in
the sense of convex geometry. If we fix a maximal torus $T$, there is
another convex set associated to $\OO$, namely the Kostant polytope
$P$, which is the convex hull of a Weyl group orbit in $\liet$.
Denote by $\faces$ the faces of $\OO$ and by $\facesp$ the faces of
$P$. $K$ acts on $\faces$ and the Weyl group $W$ acts on $\facesp$.
 In \S \ref{relazione-politopo} we show the following.
\begin{teo}
  \label{main}
  If \changed{$\sigma \in \facesp$} and $\sigma^\perp$ is the set of vectors in
  $\liet$ which are orthogonal to $\sigma$, then $Z_K(\sigma^\perp)\cd
  \sigma$ is a face of $\c$. Moreover the map $\sigma \mapsto
  Z_K(\sigma^\perp)\cd \sigma$ passes to the quotients and the
  resulting map $\facesp/W \ra \faces /K$  is a bijection.
\end{teo}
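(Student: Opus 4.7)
The plan is to parametrize faces of $\c$ by the linear functionals that expose them, to reduce to $\xi \in \liet$ by $K$-equivariance, and finally to match the exposed face with $Z_K(\sigma^\perp)\cd\sigma$ by applying Kostant's convexity theorem to a Levi subgroup of $K$.

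First, every face of $\c$ is exposed, i.e.\ of the form $F_\xi := \{x \in \c : \langle x,\xi\rangle = \max_\c\langle\cd,\xi\rangle\}$ for some $\xi \in \liek^* \cong \liek$ (via an invariant inner product)---a fact that should be available from earlier in the paper (and ultimately reflects the smoothness of $\OO = \ext\c$). The equivariance $k\cd F_\xi = F_{\Ad(k)\xi}$, together with $K\cd\liet = \liek$, means each $K$-orbit of faces contains $F_\xi$ for some $\xi \in \liet$.

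Fix such a $\xi \in \liet$. By Kostant's convexity, the orthogonal projection $\pi\colon \liek^* \to \liet^*$ sends $\OO$ onto $P$, and since $\ell_\xi$ factors through $\pi$, one has $\max_\c\ell_\xi = \max_P\ell_\xi$. Letting $S_\xi \subset W\cd\lam = \OO\cap\liet^*$ be the subset on which $\ell_\xi$ is maximal and $\sigma := \conv S_\xi$ (the face of $P$ exposed by $\xi$), moment-map theory for the one-parameter subgroup $\exp\R\xi$ identifies the maximum set of $\ell_\xi$ on $\OO$ with $Z_K(\xi)\cd S_\xi$, so $F_\xi = \conv(Z_K(\xi)\cd S_\xi)$. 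Now choose $\xi$ in the relative interior of the normal cone $N_\sigma\subset\sigma^\perp$ and generic there, so $Z_K(\xi) = Z_K(\sigma^\perp) =: L$. Since the Weyl group $W_L$ of $L$ equals the $W$-stabilizer of $\xi$, $S_\xi$ is a single $W_L$-orbit $W_L\cd\lam_0$ through the $\xi$-maximizer $\lam_0$, so $Z_K(\xi)\cd S_\xi = L\cd\lam_0$ is a single $L$-coadjoint orbit. Kostant's convexity applied to $L$ (with maximal torus $T$, coadjoint orbit $L\cd\lam_0$, and Kostant polytope $\sigma = \conv(W_L\cd\lam_0)$) gives $\conv(L\cd\lam_0) = L\cd\sigma$, and hence $F_\xi = L\cd\sigma = Z_K(\sigma^\perp)\cd\sigma$.

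For the bijection, lifting $w\in W$ to $\tilde w \in N_K(T)$ yields $(w\sigma)^\perp = w\cd\sigma^\perp$ and $Z_K((w\sigma)^\perp)\cd(w\sigma) = \tilde w\cd(Z_K(\sigma^\perp)\cd\sigma)$, so $W$-conjugate faces of $P$ map to $K$-conjugate faces of $\c$ and the construction descends to $\facesp/W \to \faces/K$. Surjectivity is immediate from the previous paragraph. Injectivity follows because $(Z_K(\sigma^\perp)\cd\sigma)\cap\liet^* = W_L\cd\sigma$ (again by Kostant applied to $L$), so $K$-conjugate faces of $\c$ cut out $W$-conjugate subsets of $P$, forcing the two faces $\sigma,\sigma'$ of $P$ into a single $W$-orbit. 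The main technical obstacles I foresee are (i) the generic-centralizer claim $Z_K(\xi) = Z_K(\sigma^\perp)$ on an open dense subset of $\relint N_\sigma$, which reduces to a root-system analysis inside $\liet$, and (ii) the moment-map identification of the maximum set of $\ell_\xi$ on $\OO$ as $Z_K(\xi)\cd S_\xi$, which is the crux of the reduction from $\c$ to $P$.
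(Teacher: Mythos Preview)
Your strategy is essentially the paper's: reduce to $\xi\in\liet$ by $K$-equivariance, relate $F_\xi(\c)$ to the face $\sigma=F_\xi(P)$ via Kostant's theorem, identify $\ext F_\xi$ as a Levi orbit, and match up the quotients. But there is a real gap at the step ``Now choose $\xi$ in the relative interior of the normal cone $N_\sigma$ and generic there.'' You began with an \emph{arbitrary} $\xi\in\liet$ (obtained by conjugating an exposing vector into $\liet$), and you need surjectivity for that given $F=F_\xi(\c)$. Replacing $\xi$ by a generic $\xi'\in N_\sigma$ produces a face $F_{\xi'}(\c)=Z_K(\sigma^\perp)\cd\sigma$, but you have not shown $F_\xi(\c)=F_{\xi'}(\c)$. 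This amounts to proving that a $T$-stable face of $\c$ is determined by its intersection with $\liet$, which is precisely the heart of the matter. In the paper this is established only after a structure theorem for faces (Theorem~\ref{facciona-orbita}, proved by induction on maximal chains of faces), which feeds into Lemma~\ref{medio-simplettico} and Corollary~\ref{interseco-monotono}. Your two ``technical obstacles'' (i) and (ii) are genuine but comparatively easy; the obstacle you did not list --- that $F_\xi$ depends only on $\sigma$ and not on the particular $\xi\in N_\sigma$ --- is the one that carries the weight.

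Two smaller points. Your parenthetical that exposedness ``ultimately reflects the smoothness of $\OO=\ext\c$'' is false: Carath\'eodory orbitopes (convex hulls of $S^1$-orbits in $\R^n$) have smooth extreme sets but non-exposed faces; in the paper the fact that every face of $\c$ is exposed (Theorem~\ref{tutte-esposte}) is one of the nontrivial results and again relies on Theorem~\ref{facciona-orbita}. And your injectivity argument jumps from ``$K$-conjugate faces of $\c$'' to ``$W$-conjugate subsets of $P$'' without justification; what is needed is that two $T$-stable faces which are $K$-conjugate are already $N_K(T)$-conjugate (because both $T$ and $kTk^{-1}$ sit as maximal tori inside the stabilizer $H_{F_2}$ and are therefore conjugate there --- this is Lemma~\ref{facce-1}).
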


During the proof of Theorem \ref{main} we show that every face is
\emph{exposed} (see Definition \ref{def-exposed}).  The extremal
points of an exposed face form a symplectic submanifold of $\OO$, that
has been studied since the important work of Duistermaat, Kolk,
Varadarajan and Heckman \cite{duistermaat-kolk-varadarajan,
  heckman-thesis}. In \S \ref{group} we reformulate their results to
describe the structure of exposed faces using the momentum map.  It
follows that every face is itself a coadjoint orbitope (Theorem
\ref{facciona-orbita}) and that it is stable under a maximal torus
(Theorem \ref{torus-preserves}).  For $K=\operatorname{SO}(n)$ a proof
of Theorem \ref{main} is given in \cite [\S3.2]
{sanyal-sottile-sturmfels-orbitopes}. Their proof relies on the
representation of these orbitopes as spectrahedra.

The second main result of the paper deals with the complex geometry of
$\OO$.  Consider the K\"{a}hler structure on $\OO$ and the holomorphic
action of $G = K^\C$ (see \S \ref {pre-orbits}).
\begin{teo}
  \label{main-2}
  If $F$ is a face of $\c$, then $\ext F \subset \OO$ is a closed
  orbit of a parabolic subgroup of $ G$. Conversely, if $P\subset G$
  is a parabolic subgroup, then it has a unique closed orbit
  $\OO'\subset \OO$ and there is a face $F$ such that $\ext F = \OO'$.
\end{teo}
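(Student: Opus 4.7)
The plan is to realize $\ext F$ as the maximum set of a suitable component of the momentum map and to identify this set with the unique closed orbit of the corresponding parabolic subgroup. Let $F$ be a face of $\c$. Since every face of $\c$ is exposed (established in the proof of Theorem \ref{main}), there exists $\xi \in \liek$ with
$$ F = \{x \in \c : \langle x, \xi\rangle = M\}, \qquad M := \max_{y \in \c}\langle y, \xi\rangle. $$
Using $\ext \c = \OO$, one gets $\ext F = F \cap \OO = \{x \in \OO : \mu^\xi(x) = M\}$, where $\mu^\xi(x) := \langle x, \xi\rangle$ is the $\xi$-component of the momentum map $\mu : \OO \hookrightarrow \liek^*$. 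Associate to $\xi$ the parabolic subgroup
$$ P_\xi := \left\{g \in G : \lim_{t\to+\infty}\exp(ti\xi)\, g\, \exp(-ti\xi) \text{ exists in } G\right\}, $$
with Levi decomposition $P_\xi = L_\xi U_\xi$, where $L_\xi = Z_G(\xi) = Z_K(\xi)^\C$ and $U_\xi$ is the unipotent radical.

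I claim that $\ext F$ is a single closed $P_\xi$-orbit. First, $L_\xi$ preserves $\mu^\xi$: the compact part $K_\xi := Z_K(\xi)$ does so by $K$-equivariance of $\mu$ together with $\Ad(k)\xi = \xi$, and the non-compact directions $\exp(i\eta)$ with $\eta \in \liek$, $[\eta,\xi]=0$ do so because $\{\mu^\eta, \mu^\xi\} = \mu^{[\eta,\xi]} = 0$; hence $L_\xi$ preserves $\ext F$. Second, $U_\xi$ fixes each $x \in \ext F$ pointwise: at such a critical point of $\mu^\xi$, negative-semidefiniteness of the Hessian forces the weights of $\ad(i\xi)$ on $T_x \OO \cong \lieg/\lieg_x$ to all have one sign, which implies $\liu_\xi \subset \lieg_x$ and hence $U_\xi\cdot x = \{x\}$. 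Third, Theorem \ref{facciona-orbita} identifies $\ext F$ with a coadjoint orbit of $K_\xi$, giving $K_\xi$-homogeneity and a fortiori $L_\xi$-homogeneity; combined with $U_\xi$ acting trivially, this yields $P_\xi \cdot x = L_\xi \cdot x = \ext F$ for every $x \in \ext F$. Closedness of $\ext F$ is immediate from compactness.

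For uniqueness and the converse: since $\OO$ is a flag manifold $G/Q$, the action of $P_\xi$ on it has finitely many orbits with a unique closed one (a classical consequence of the Bruhat decomposition); combined with the previous paragraph showing $\ext F$ is closed and $P_\xi$-homogeneous, $\ext F$ must be this unique closed orbit. For the converse statement, every parabolic $P \subset G$ is $K$-conjugate to $P_\xi$ for some $\xi \in \liek$ (by the classification of parabolics via faces of a closed Weyl chamber), and the face of $\c$ exposed by such $\xi$ then has extreme set equal to the unique closed $P$-orbit in $\OO$.

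The main obstacle is establishing the $L_\xi$-homogeneity of $\ext F$. A direct argument would proceed via Kirwan--Ness theory applied to the maximum set of $\mu^\xi$, but I plan to bypass this by invoking Theorem \ref{facciona-orbita}, which already identifies $\ext F$ as a coadjoint orbit of $K_\xi$. The Hessian/weight argument for $U_\xi$ acting trivially requires care with the Kähler sign convention but is otherwise routine.
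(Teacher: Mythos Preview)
Your overall strategy is close to the paper's: in Theorem~\ref{parabolique} the authors prove the equivalence of four conditions, passing through the root-theoretic description of faces (Theorem~\ref{satakone}) and using Lemma~\ref{Borel} to see that the unipotent radical fixes the point $x\in\cchamber$. Your route is somewhat more direct in that it bypasses the $x$-connected combinatorics and works straight with the exposing vector $\xi$; this is a legitimate shortcut since the ingredients you invoke (Theorem~\ref{facciona-orbita}, Lemma~\ref{hesso}, the Hessian computation of Lemma~\ref{hesso-2}) are already available.

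There is, however, a genuine gap in your argument that $L_\xi$ preserves $\mu^\xi$. The Poisson identity $\{\mu^\eta,\mu^\xi\}=\mu^{[\eta,\xi]}=0$ tells you that the \emph{Hamiltonian} flow $\exp(t\eta)$ preserves $\mu^\xi$; it says nothing about $\exp(ti\eta)$, whose infinitesimal generator is $J\eta_\OO$, the \emph{gradient} of $\mu^\eta$. A direct computation gives
\[
\frac{d}{dt}\Big|_{t=0}\mu^\xi\bigl(\exp(ti\eta)\cdot x\bigr)=d\mu^\xi(J\eta_\OO)=\om(\xi_\OO,J\eta_\OO)=g(\xi_\OO,\eta_\OO)(x),
\]
which has no reason to vanish (take $\eta=\xi$: you recover the gradient flow of $\mu^\xi$ itself, which certainly changes $\mu^\xi$). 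So $L_\xi$ does \emph{not} preserve $\mu^\xi$ globally, and your argument for $L_\xi$-invariance of $\ext F$ fails as written.

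The fix is exactly what the paper uses in the implication (d)$\Rightarrow$(a) of Theorem~\ref{parabolique}: observe that $\ext F=\ml(\mom_\xi)$ is a connected component of the fixed-point set of the torus $\overline{\exp(\R\xi)}$, hence a \emph{complex} submanifold of $\OO$. Since $K_\xi$ preserves $\ext F$ and the $G$-action is holomorphic, the complexification $L_\xi=K_\xi^\C$ preserves $\ext F$ as well (the stabilizer of a closed complex submanifold is a closed complex subgroup containing $K_\xi$, hence all of $L_\xi$). With this in place, your argument that $U_\xi$ fixes each point of $\ext F$ (via the Hessian / weight-sign analysis, which is the content of Lemmas~\ref{hesso-2} and~\ref{Borel}) and the invocation of Bruhat for uniqueness go through. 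Do watch the sign in your definition of $P_\xi$: with the paper's convention ($x\in\cchamber$ forces $B_-\subset G_x$), the parabolic whose unipotent radical lies in $G_x$ is the one built from the \emph{negative} root spaces, so you may need $t\to -\infty$ rather than $t\to +\infty$.
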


In \S \ref{strata} we show that there is a finite stratification of
the boundary of $\c$ in terms of face types, where the strata are
smooth fibre bundles over flag manifolds.  In \S \ref{satake-section}
we give a description of the faces in terms of root data, using the
formalism of $x$-connected subset of simple roots developed by Satake
\cite{satake-compactifications}.  In the last section we prove that if
$\OO$ is an integral orbit (i.e. it corresponds to a representation),
the same holds for $\ext F$ for any faces $F\subset \c$.

\medskip

We think that many other aspects of these orbitopes are worth
studying.  It would be interesting to find explicity formulae for the
volume, the surface area and the Quermassintegrals.  Also, in a future
paper we plan to study the following class of orbitopes: $G$ is a real
semisimple Lie group with Cartan decomposition $\lieg=\liek\oplus
\liep$, $\OO$ is a $K$-orbit in $\liep$ and $\c$ is the convex hull of
$\OO$.  Coadjoint orbitopes correspond to the special case where
$G=K^\C$ and $\liep = i \liek$.

{\bfseries \noindent{Acknowledgements.}}  The first two authors wish
to thank Laura Geatti for interesting discussions and the Fakultät für
Mathematik of Ruhr-Universität Bochum for hospitality.

\section{Preliminaries from convex geometry}
\label{pre-convex}

It is useful to recall a few definitions and results regarding convex
sets (see e.g. \cite{schneider-convex-bodies}).  Let $V$ be a real
vector space and $E\subset V$ a convex subset.  The \emph{relative
  interior} of $E$, denoted $\relint E$, is the interior of $E$ in its
affine hull.  \changed{ If $x, y\in E$, then $[x,y]$ denotes the
  closed segment from $x$ to $y$, i.e. $[x,y]:= \{ (1-t)x + t y : t\in
  [0,1]\}$. } A face $F$ of $E$ is a convex subset $F\subset E$ with
the following property: if $x,y\in E$ and $\relint[x,y]\cap F\neq
\vacuo$, then $[x,y]\subset F$.  The \emph{extreme points} of $E$ are
the points $x\in E$ such that $\{x\}$ is a face.  If $E$ is compact
the faces are closed \cite[p. 62]{schneider-convex-bodies}.  If $F$ is
a face of $E$ we say that $\relint F$ is an \emph{open face} of $E$.
A face distinct from $E$ and $\vacuo$ will be called a \enf{proper
  face}.

Assume for simplicity that a scalar product $\sx \ , \, \xs$ is fixed
on $V$ and that $E \subset V$ is a \enf{compact} convex subset with
nonempty interior.

\begin{defin}\label{def-exposed}
  The \enf{support function} of $E$ is the function
  \begin{gather}
    \label{def-support}
    h_E : V \ra \R \qquad h_E(u) = \max_{x \in E} \sx x, u \xs.
  \end{gather}
  If $ u \neq 0$, the hyperplane $H(E, u) : = \{ x\in V : \sx x, u \xs
  = h_E(u)\}$ is called the \enf{supporting hyperplane} of $E$ for
  $u$. The set
  \begin{gather*}
    F_u (E) : = E \cap H(E,u)
  \end{gather*}
  is a face and it is called the \enf{exposed face} of $E$ defined by
  $u$ or also the \emph{support set} of $E$ for $u$.
\end{defin}
In using the notation $F_u(E)$ we will tacitly assume that the affine
span of $E$ is $V$. Hence by definition an exposed face is proper.  We
notice that in general not all faces of a convex subsets are exposed.
A simple example is given by the convex hull of a closed disc and a
point outside the disc: the resulting convex set is the union of the
disc and a triangle. The two vertices of the triangle that lie on the
boundary of the disc are non-exposed 0-faces.

\begin{lemma}\label{ext-facce}
  If $F$ is a face of a convex set $E$, then $\ext F = F \cap \ext E$.
\end{lemma}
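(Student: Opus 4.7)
The plan is to prove both inclusions directly from the definition of a face and the fact that a point $x$ is extreme in a convex set $C$ if and only if $\{x\}$ is a face of $C$. The whole argument is really just transitivity of the face relation: a face of a face of $E$ is a face of $E$.

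For the inclusion $\ext F \subseteq F \cap \ext E$, I would take $x \in \ext F$, so $\{x\}$ is a face of $F$, and show that $\{x\}$ is also a face of $E$. Given $y,z\in E$ with $x\in\relint[y,z]$, the face property of $F$ in $E$, applied to the fact that $x\in F$ lies in $\relint[y,z]$, forces $[y,z]\subseteq F$. Then the face property of $\{x\}$ inside $F$, applied to $y,z\in F$ with $x\in\relint[y,z]$, forces $[y,z]\subseteq\{x\}$, i.e.\ $y=z=x$.

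For the reverse inclusion $F\cap\ext E\subseteq\ext F$, I would take $x\in F\cap\ext E$ and verify directly that $\{x\}$ is a face of $F$. If $y,z\in F$ satisfy $x\in\relint[y,z]$, then in particular $y,z\in E$, so the face property of $\{x\}$ in $E$ yields $[y,z]\subseteq\{x\}$, showing $y=z=x$, as required.

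There is essentially no obstacle here; the only step that requires a moment of thought is the first inclusion, where one must use the face property of $F\subseteq E$ once to bring the segment $[y,z]$ into $F$ before applying the face property of $\{x\}\subseteq F$. The statement is purely formal and follows from unwinding the definition given earlier in Section \ref{pre-convex}.
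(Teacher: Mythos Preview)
Your proof is correct and is precisely the argument the paper has in mind: the paper's own proof is a two-line sketch saying that $F\cap\ext E\subset\ext F$ is immediate and that the converse follows from the definition of a face, and you have simply spelled out both steps explicitly, correctly using the face property of $F$ in $E$ to pull the segment $[y,z]$ into $F$ for the harder inclusion.
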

\begin{proof}
  It is immediate that $F \cap \ext E \subset \ext F$. The converse
  follows from the definition of a face.
\end{proof}

\begin{lemma}
  \label{convex-orbit}
  If $G$ is a compact group, $V$ is a representation space of $G$ and
  $G\cd x$ is an orbit of $G$, then $\conv ( G\cd x)$ contains a fixed
  point of $G$. Moreover any fixed point contained in $\conv (G\cd x)$
  lies in $\relint \conv (G\cd x)$.
\end{lemma}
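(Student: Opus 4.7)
The plan for the first part is to use Haar averaging. I would let $\mu$ denote the normalized Haar measure on $G$ and set $\bar x := \int_G g \cdot x\, d\mu(g)$. Left-invariance of $\mu$ gives $h \cdot \bar x = \bar x$ for every $h \in G$, so $\bar x$ is fixed; and $\bar x \in \conv(G \cdot x)$ because the convex hull of the compact orbit $G \cdot x$ is a compact convex subset of $V$ containing the image of the integrand, and Hahn--Banach shows that Bochner/Riemann integrals against a probability measure land in any closed convex set containing the integrand.

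For the second assertion, I would fix a $G$-invariant inner product on $V$ (available since $G$ is compact), write $C := \conv(G \cdot x)$, and consider an arbitrary $G$-fixed point $v \in C$. The goal is to show $v \in \relint C$. I would form the smallest face $F_v$ of $C$ containing $v$; this exists because arbitrary intersections of faces are faces. Since $G$ acts on $V$ by linear (hence affine) isomorphisms, $g \cdot F_v$ is again a face of $C$ and still contains $v = g \cdot v$, so by minimality $g \cdot F_v = F_v$ for every $g \in G$. Thus $F_v$ is a $G$-invariant closed face of $C$.

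The crucial step is to identify $\ext C$. Since $G \cdot x$ is compact, Milman's converse to the Krein--Milman theorem gives $\ext C \subset G \cdot x$. Because $\ext C$ is $G$-invariant (extreme points are preserved by affine automorphisms) and $G \cdot x$ is a single orbit, the only options are $\ext C = \vacuo$ or $\ext C = G \cdot x$; the former is ruled out by Krein--Milman, so $\ext C = G \cdot x$. Applying Lemma \ref{ext-facce} to $F_v$ yields $\ext F_v = F_v \cap G \cdot x$, which is again $G$-invariant, hence again either empty or all of $G \cdot x$. Emptiness contradicts Krein--Milman for the nonempty compact convex set $F_v$, so $G \cdot x \subset F_v$; taking convex hulls gives $C \subset F_v$ and hence $F_v = C$, so $v \in \relint F_v = \relint C$.

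I expect the main obstacle to be conceptual rather than computational: one has to recognize that the combination of $G$-invariance of $\ext C$ with the single-orbit hypothesis on $G \cdot x$ forces a sharp dichotomy, which via Lemma \ref{ext-facce} propagates to every nonempty $G$-invariant face and thereby identifies the minimal face $F_v$ through $v$ with all of $C$.
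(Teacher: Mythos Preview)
Your proof is correct and follows essentially the same route as the paper: Haar averaging for existence, then for the second part identifying a $G$-invariant face through the fixed point, observing that its extreme set is a $G$-invariant subset of the single orbit $G\cdot x$, and concluding the face must be all of $C$. The only cosmetic differences are that the paper invokes Theorem~\ref{schneider-facce} directly to get the unique face $F$ with $y\in\relint F$ (rather than your ``smallest face'' description, which yields the same face), and the $G$-invariant inner product you introduce is never actually used.
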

\begin{proof}
 \changed{ Set}
  \begin{gather*}
    \bar{x}:= \int_G g\cd x \, dg
  \end{gather*}
  where $dg$ is the normalized Haar measure. Then $\bar{x}$ is
  $G$-invariant and belongs \changed{$\conv (G \cd x)$}.  Now let $y$ be any fixed point of
  $G$ that lies in \changed{$\conv (G \cd x)$}.  By Theorem \ref{schneider-facce} there is a
  unique face \changed{$F \subset \conv (G \cd x)$} such that $y$ belongs to $\relint
  F$. Since \changed{$\conv(G \cd x)$} is $G$-invariant and $ y $ is fixed by \changed {$G$}, it
  follows that $a\cdot F =F$ for any $a\in G$. So $F$ is
  $G$-invariant, and hence also $\ext F$ is $G$-invariant. Since $\ext
  F \subset \ext (\conv (G\cd x) ) \subset G\cd x$, it follows that
  $\ext F = G\cd x$ and hence that \changed{$F = \conv (G\cd x)$}.
\end{proof}

\begin{prop}
  \label{u-cono}
  If $F \subset E$ is an exposed face, the set $\CF : = \{ u\in V:
  F=F_u(E) \}$ is a convex cone. If $G$ is a compact subgroup of
  $O(V)$ that preserves both $E$ and $F$, then $\CF$ contains a fixed
  point of $G$.
\end{prop}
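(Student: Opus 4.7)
The claim splits into two parts: first that $\CF$ is a convex cone, and second that averaging produces an element of $\CF$ fixed by $G$. Both parts are elementary manipulations with the support function, but the second requires some care to ensure the averaged vector lies in $\CF$ itself rather than in its closure.

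For the cone property, I would observe that $h_E$ is positively homogeneous and $H(E,\lambda u) = H(E,u)$ for $\lambda > 0$, so $\CF$ is closed under positive scaling. For convexity, take $u_1, u_2 \in \CF$, $t\in[0,1]$, and set $u := (1-t)u_1 + tu_2$. For $y \in E$, linearity gives
\[
\sx y, u\xs \le (1-t)h_E(u_1) + t\,h_E(u_2),
\]
with equality iff $y \in H(E,u_1)\cap H(E,u_2) = F\cap F = F$. Hence $h_E(u) = (1-t)h_E(u_1)+t\,h_E(u_2)$, the maximum is attained exactly on $F$, and thus $u \in \CF$.

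For the $G$-fixed point, I would first check that $g\cd u \in \CF$ whenever $u \in \CF$ and $g \in G$: since $g$ preserves $E$ and the scalar product, $h_E(g\cd u) = h_E(u)$ and $H(E,g\cd u) = g\cd H(E,u)$, so $F_{g\cd u}(E) = g\cd F = F$. Then, following the proof of Lemma~\ref{convex-orbit}, I would set
\[
\bar u := \int_G g\cd u \, dg,
\]
which is manifestly $G$-invariant.

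The main (mild) obstacle is to show $\bar u \in \CF$, not merely in $\overline{\CF}$. For this I would compute, for $x \in E$,
\[
\sx x, \bar u \xs = \int_G \sx x, g\cd u \xs\, dg \le \int_G h_E(g\cd u)\, dg = h_E(u),
\]
with equality iff $\sx x, g\cd u\xs = h_E(g\cd u)$ for every $g$, equivalently iff $x \in \bigcap_{g\in G} F_{g\cd u}(E) = F$. This shows $h_E(\bar u) = h_E(u)$ and that the maximum is attained exactly on $F$, so $F_{\bar u}(E) = F$. It also forces $\bar u \neq 0$: otherwise the maximum $0$ would be attained on all of $E$, contradicting properness of the exposed face $F \subsetneq E$. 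Therefore $\bar u \in \CF$ is the required fixed point.
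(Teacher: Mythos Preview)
Your proof is correct and follows essentially the paper's approach. For the cone property the argument is identical (modulo the harmless shorthand ``$H(E,u_1)\cap H(E,u_2)=F\cap F$'', where you of course mean the intersection with $E$). For the fixed point, the paper simply observes that $G\cd u\subset \CF$ and invokes Lemma~\ref{convex-orbit} together with the already established convexity of $\CF$ to place $\bar u\in\conv(G\cd u)\subset\CF$; your direct integral computation of $\sx x,\bar u\xs$ is a self-contained variant of the same idea, effectively reproving the convexity in integral form, with the bonus that it makes the nonvanishing of $\bar u$ explicit.
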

\begin{proof}
  Let $u_1, u_2 \in \CF$ and $\la_1 , \la_2 \geq 0$ and set $u = \la_1
  u_1 + \la_2 u_2$. We need to prove that if at least one of $\la_1,
  \la_2$ is strictly positive, then $F=F_u(E)$.  Assume for example
  that $\la_1 >0$. It is clear that $h_E (u) \leq \la_1 h_E(u_1 ) +
  \la_2 h_E(u_2)$.  If $x\in F$, then
  \begin{gather*}
    \sx x, u\xs = \la_1 \sx x, u_1 \xs + \la_2 \sx x, u_2\xs = \la_1
    h_E(u_1 ) + \la_2 h_E(u_2).
  \end{gather*}
  Hence $h_E (u) = \la_1 h_E(u_1 ) + \la_2 h_E(u_2)$ and $F\subset
  F_u(E)$. Conversely, if $x\in F_u (E)$, then
  \begin{gather*}
    0= h_E(u) -\sx x, u\xs = \la_1 ( h_E(u_1) - \sx x, u_1 \xs ) +
    \la_2 ( h_E(u_2) - \sx x, u_2\xs).
  \end{gather*}
  Since $\la_1 >0$ we get $h_E(u_1) - \sx x, u_1 \xs = 0$, so $x\in
  F_{u_1}(E) = F$.  Thus $F= F_u(E)$. This proves the first fact.  To
  prove the second, pick any vector $u\in \CF$ and apply the previous
  lemma to the orbit $G\cd u \subset \CF$: this yields a $G$-invariant
  $\bar{u} \in \CF$.
\end{proof}

\begin{teo} [\protect{\cite[p. 62]{schneider-convex-bodies}}]
  \label{schneider-facce} If $E$ is a compact convex set and $F_1,F_2$
  are distinct faces of $E$ then $\relint F_1 \cap \relint
  F_2=\vacuo$. If $G$ is a nonempty convex subset of $ E$ which is
  open in its affine hull, then $G \subset\relint F$ for some face $F$
  of $E$. Therefore $E$ is the disjoint union of its open faces.
\end{teo}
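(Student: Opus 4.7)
The plan is to establish the three assertions in sequence; the third follows immediately from the first two by applying the second part to the singletons $G=\{x\}$, $x \in E$, which are trivially open in their affine hull (a single point).

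For the first assertion, pick $x \in \relint F_1 \cap \relint F_2$. It suffices to prove $F_1 \subset F_2$, the reverse inclusion following by symmetry. Given $y \in F_1$, I would use that $x \in \relint F_1$ to extend the segment from $y$ through $x$ inside $F_1$: the line through $x$ and $y$ lies in $\aff F_1$, and $\relint F_1$ is open in $\aff F_1$, so there exists $z \in F_1 \subset E$ with $x \in \relint[y,z]$. Since $x \in F_2$ and $F_2$ is a face, this forces $[y,z] \subset F_2$, in particular $y \in F_2$.

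For the second assertion, first note that any intersection of faces of $E$ is again a face (immediate from the defining property, applied in a single step). Fix $x \in G$ and let $F$ be the intersection of all faces of $E$ containing $x$, so $F$ is the smallest face of $E$ through $x$. A supporting-hyperplane argument inside $\aff F$ then shows $x \in \relint F$: otherwise a supporting hyperplane of the compact convex set $F$ in $\aff F$ passing through $x$ would cut out a proper face of $F$ containing $x$, and since a face of a face of $E$ is itself a face of $E$ (directly from the definition), this would contradict minimality of $F$. With $F$ in hand, the segment-extension trick of the first part, now using that $G$ is open in $\aff G$ in place of $F_1$, yields $G \subset F$: for $y \in G$ produce $z \in G$ with $x \in \relint[y,z]$ and apply the face property of $F$. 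To upgrade this to $G \subset \relint F$, let $F_y$ be the smallest face of $E$ containing $y$; the same argument with $y$ in place of $x$ gives $F_y \ni x$ and $y \in \relint F_y$, and minimality of $F$ then forces $F \subset F_y \subset F$, so $y \in \relint F$.

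The third assertion is then immediate from these two. The delicate step throughout is the interplay between minimality and relative interior: one must verify that the smallest face containing a given point actually places that point in its relative interior, which is exactly the supporting-hyperplane argument above and relies on faces of $E$ being compact convex sets, as already recorded in the excerpt.
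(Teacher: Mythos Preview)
The paper does not give its own proof of this theorem; it simply records the statement and cites Schneider's book. So there is no ``paper's approach'' to compare against. Your argument is correct and is essentially the standard proof one finds in convex-geometry texts: the segment-extension trick for the first assertion, and the minimal-face construction together with the supporting-hyperplane characterisation of the relative boundary for the second. The only things worth making explicit are already implicit in what you wrote: that $E$ itself is a face (so the family of faces containing $x$ is nonempty), that faces of a compact $E$ are compact (needed for the supporting-hyperplane step, and recorded just before the theorem), and that in the degenerate case $y=x$ the segment argument is vacuous. With those remarks your proof is complete.
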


\begin{lemma}
  If $E$ is a compact convex set and $F\subsetneq E$ is a face, then
  $\dim F < \dim E$.
\end{lemma}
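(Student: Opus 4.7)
The plan is to argue by contradiction: assume $\dim F = \dim E$ and derive that $F=E$, which contradicts $F \subsetneq E$. The slogan is that a proper face cannot contain an interior point of $E$, and equality of dimensions would force exactly that.

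First I would compare the affine hulls. Since $F \subset E$, we have $\aff F \subset \aff E$; two affine subspaces of the same (finite) dimension, one inside the other, must coincide. Hence $\aff F = \aff E$, and consequently $\relint F$, being open in $\aff F$, is open in $\aff E$. Any point $x \in \relint F$ therefore lies in the interior of $E$ relative to $\aff E$, i.e.\ $x \in \relint E$. (Note $\relint F \neq \vacuo$ because $F$ is a nonempty convex set, which I would also quote from the standard convex-geometry facts in \S\ref{pre-convex}.)

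Now I would use this interior point to absorb all of $E$ into $F$. Fix an arbitrary $y \in E$. Because $x \in \relint E$, I can travel along the line through $y$ and $x$ past the point $x$ and stay inside $E$ for a short while; more precisely, there exists $\eps > 0$ such that
\begin{gather*}
  y' := x + \eps ( x - y ) \in E,
\end{gather*}
and by construction $x = \frac{\eps}{1+\eps}\, y + \frac{1}{1+\eps}\, y' \in \relint [y, y']$. Since $x \in F$ and $\relint [y,y'] \cap F \neq \vacuo$, the face property forces $[y,y'] \subset F$, so in particular $y \in F$. As $y \in E$ was arbitrary, $E \subset F$ and hence $E=F$, the desired contradiction.

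I do not expect a real obstacle here: the entire argument rests on the definition of a face and the elementary observation that the relative interior of a full-dimensional convex subset is contained in the relative interior of the ambient convex set. The only thing to be slightly careful about is ensuring $\relint F$ is nonempty (so that an $x$ can be chosen), which is a standard fact for nonempty convex sets in finite-dimensional vector spaces.
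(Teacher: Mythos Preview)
Your proof is correct and follows essentially the same route as the paper: both argue by contradiction, observe that $\dim F = \dim E$ forces $\relint F$ to be open in $\aff E$ and hence $\relint F \subset \relint E$, and then conclude $F=E$. The only difference is cosmetic: where you unpack the final step with an explicit segment argument from the face definition, the paper simply invokes Theorem~\ref{schneider-facce} (disjointness of distinct open faces) applied to the faces $F$ and $E$.
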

\begin{proof}
  If $\dim F = \dim E$, then $\relint F$ is open in the affine span of
  $E$, so $\relint F \subset \relint E$. By the previous theorem this
  implies that $F=E$.
\end{proof}

\begin{lemma}
  \label{face-chain}
  If $E$ is a compact convex set and $F\subset E$ is a face, then
  there is a chain of faces
  \begin{gather*}
    F_0=F \subsetneq F_1 \subsetneq \cds \subsetneq F_k=E
  \end{gather*}
  which is maximal, in the sense that for any $i$ there is no face of
  $E$ strictly contained between $F_{i-1}$ and $F_i$.
\end{lemma}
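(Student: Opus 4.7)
The plan is to induct on the codimension $\dim E - \dim F$, building the chain one step at a time by adding a face that ``covers'' the previous one.

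For the base case, when $\dim F = \dim E$, the preceding lemma forces $F = E$ and the trivial chain with $k=0$ works. For the inductive step, assume $F \subsetneq E$. The idea is to choose $F_1$ among all faces of $E$ strictly containing $F$ to have minimal dimension; such a choice exists because $\dim$ takes values in a finite set of non-negative integers, and the collection is nonempty since it contains $E$ itself. I would then verify that no face $G$ of $E$ satisfies $F \subsetneq G \subsetneq F_1$.

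The verification rests on the standard fact that a face of a face is a face: if $G$ is a face of $E$ contained in the face $F_1$, and if $x,y \in F_1$ satisfy $\relint[x,y] \cap G \neq \vacuo$, then $x,y \in E$ and the face property of $G \subset E$ gives $[x,y] \subset G$, showing $G$ is a face of $F_1$. Hence by the previous lemma applied inside $F_1$, the strict inclusion $G \subsetneq F_1$ would force $\dim G < \dim F_1$, contradicting the minimal choice of $\dim F_1$ among faces strictly containing $F$.

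Having produced $F_1$ with $F \subsetneq F_1 \subseteq E$ and no face strictly between $F$ and $F_1$, the previous lemma gives $\dim F_1 > \dim F$, so $\dim E - \dim F_1 < \dim E - \dim F$ and the inductive hypothesis yields a maximal chain
\[
F_1 \subsetneq F_2 \subsetneq \cdots \subsetneq F_k = E.
\]
Prepending $F_0 := F$ produces the desired maximal chain. No step here is particularly delicate; the only point that requires a moment's care is the transitivity of the face relation used to compare $G$ with $F_1$ inside $E$, and this is the main (small) obstacle worth writing out explicitly.
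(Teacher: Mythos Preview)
Your proof is correct. The paper's argument is organised slightly differently: rather than inducting on $\dim E - \dim F$ and picking at each step a face $F_1 \supsetneq F$ of \emph{minimal} dimension, the paper starts with the trivial chain $F \subsetneq E$ and repeatedly refines it by inserting any intermediate face wherever the chain fails to be maximal; termination follows because $\dim F_{i-1} < \dim F_i$ bounds the chain length by $\dim E$. Your approach is a bit more explicit and requires the extra observation that a face of $E$ contained in $F_1$ is automatically a face of $F_1$ (which you verify), whereas the paper's refinement argument avoids this by never needing to compare a candidate $G$ with $F_1$. Both are elementary and rest on the same dimension lemma; yours is more constructive, the paper's is terser.
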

\begin{proof}
  If $F=E$ there is nothing to prove. Otherwise put $F_0:=F$. If there
  is no face strictly contained between $F_0$ and $E$, just set
  $F_1=E$. Otherwise we find a chain $F_0 \subsetneq F_1 \subsetneq
  F_2=E$. If this is not maximal, we can refine it. Repeting this step
  we get a chain with $k+1$ elements. Since $\dim F_{i-1} < \dim F_i$,
  $k\leq n$. Therefore the chain gotten after at most $n$ steps is
  maximal.
\end{proof}

\begin{lemma} \label{micro-convesso} If $E$ is a convex subset of
  $\R^n$, $M\subset \R^n$ is an affine subspace and $F\subset E$ is a
  face, then $F\cap M $ is a face of $E\cap M$.
\end{lemma}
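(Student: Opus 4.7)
The plan is essentially a direct unpacking of the definition of a face, using the key fact that affine subspaces are preserved under taking segments. First I would verify the easy structural points: $F \cap M$ is convex (intersection of two convex sets) and clearly contained in $E \cap M$.

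Then I would check the defining property of a face. Let $x, y \in E \cap M$ and suppose $\relint[x,y] \cap (F \cap M) \neq \vacuo$; the goal is to show $[x,y] \subset F \cap M$. Pick a witness $z \in \relint[x,y] \cap F \cap M$. Because $M$ is an affine subspace and $x, y \in M$, the entire segment $[x,y]$ is contained in $M$ (affine subspaces are closed under affine combinations, in particular under convex combinations of two points). On the other hand, we have $x, y \in E$ and $z \in \relint[x,y] \cap F$, so the face property of $F \subset E$ gives $[x,y] \subset F$. Combining the two inclusions, $[x,y] \subset F \cap M$, which is exactly the face property for $F \cap M \subset E \cap M$.

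There is no real obstacle here; the statement reduces to the observation that the hypothesis ``$\relint[x,y] \cap (F \cap M) \neq \vacuo$'' automatically supplies the hypothesis needed to invoke the face property of $F$ in $E$, while the affineness of $M$ takes care of the membership in $M$ for free. The only minor point to be careful about is the trivial case when $F \cap M = \vacuo$ or is a singleton: the empty set is conventionally a face, and a single point trivially satisfies the face condition, so these degenerate cases cause no trouble.
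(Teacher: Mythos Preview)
Your proof is correct and follows exactly the same argument as the paper: invoke the face property of $F$ in $E$ to get $[x,y]\subset F$, and use affineness of $M$ to get $[x,y]\subset M$. The paper's proof is just a terser version of what you wrote, omitting the explicit check of convexity and the remark on degenerate cases.
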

\begin{proof}
  If $x,y \in E\cap M$ and $\relint [x,y] \cap F\cap M \neq \vacuo$
  then $[x, y] \subset F$ since $F$ is a face, but $[x,y]$ is also
  contained in $M$ since $M$ is affine. So $[x,y] \subset F\cap M$ as
  desired.
\end{proof}

\section{Coadjoint orbits}
\label{pre-orbits}

Throught the paper we will use the following notation.  $K$ denotes a
compact connected semisimple Lie group with Lie algebra $\liek$.  If
$T\subset K$ is a maximal torus and $\simple \subset \root(\liek^\C,
\liet^C)$ is a set of simple roots, the Weyl chamber of $\liet$
corresponding to $\simple$ is defined by
\begin{gather*}
  \chamber :=\{v\in \liet : -i\alfa(v) >0 \text { for any } \alfa \in
  \root_+\}.
\end{gather*}
$B$ is the Killing form of $\liek^\C$ and $\sx\, , \xs =
-B\restr{\liek \times \liek}$ is a scalar product on $\liek$.  By
means of $\sx \, , \xs$ we identify $\liek $ with $\liek^*$.

\begin{lemma} \label{knappone} Let $T\subset K$ be a maximal torus,
  let $\roots$ be the root system of $( \liek^\C, \liet^C)$ and let
  $\simple \subset \root$ be a base.  Define $H_\alfa \in \liet^\C$ by
  the formula $ B(H_\alfa , \cd ) = \alfa(\cd )$ and choose a nonzero
  vector $X_\alfa \in \lieg_\alfa$ for any $\alfa \in \root$.  For $
  \alfa \in \root_+$ set
  \begin{gather*}
    \label{base-compatta}
    u_\alfa:=\frac{1}{\sqrt{2}} (X_\alfa - X_{-\alfa}) \qquad v_\alfa:
    = \frac{i}{\sqrt{2}} (X_\alfa + X_{-\alfa}).
  \end{gather*}
  Then it is possible to choose the vectors $X_\alfa$ in such a way
  that $ [X_\alfa , X_{-\alfa} ] = H_\alfa$ and so that the set
  $\{u_\alfa, v_\alfa | \alfa \in \root_+\}$ be orthonormal with
  respect to $\sx \ ,\ \xs = - B$.  Moreover for $y\in \liet$
  \begin{gather*}
    [y, u_\alfa ] = - i \alfa(y) v_\alfa \qquad [y, v_\alfa ] =
    i\alfa(y) u_\alfa\qquad [u_\alfa,v_\alfa ] = i H_\alfa.
  \end{gather*}
\end{lemma}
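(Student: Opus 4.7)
The plan is to exploit the compact real form structure on $\liek^\C$. Let $\tau \colon \liek^\C \to \liek^\C$ denote the complex conjugation with respect to $\liek$. Two standard facts will do the real work: since the roots are purely imaginary on $\liet$, the map $\tau$ sends $\lieg_\alfa$ to $\lieg_{-\alfa}$; and the Hermitian form $(X,Y) \mapsto -B(X, \tau Y)$ is positive definite on $\liek^\C$.

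My first step would be the reality normalization. For each $\alfa \in \root_+$ I would pick an arbitrary nonzero $X_\alfa \in \lieg_\alfa$ and then \emph{define} $X_{-\alfa} := -\tau(X_\alfa) \in \lieg_{-\alfa}$. Since $\tau$ is an involutive antilinear automorphism, a direct check gives $\tau(u_\alfa)=u_\alfa$ and $\tau(v_\alfa)=v_\alfa$, so the vectors $u_\alfa, v_\alfa$ in the statement automatically lie in $\liek$.

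Next I would fix the scale of $X_\alfa$. The $\ad$-invariance of $B$ gives the standard identity $[X_\alfa, X_{-\alfa}] = B(X_\alfa, X_{-\alfa})\, H_\alfa$, and positivity yields $B(X_\alfa, X_{-\alfa}) = -B(X_\alfa, \tau X_\alfa) > 0$. So I may rescale $X_\alfa$ by the positive real number $B(X_\alfa, X_{-\alfa})^{-1/2}$; because this factor is real, the relation $X_{-\alfa} = -\tau X_\alfa$ persists, and after the rescaling $B(X_\alfa, X_{-\alfa}) = 1$ and $[X_\alfa, X_{-\alfa}] = H_\alfa$.

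Finally, the orthonormality of $\{u_\alfa, v_\alfa : \alfa \in \root_+\}$ with respect to $-B$ and the three bracket identities all follow by direct expansion, using that $B(\lieg_\alfa, \lieg_\beta) = 0$ whenever $\alfa + \beta \neq 0$ and that $[y, X_\alfa] = \alfa(y) X_\alfa$ for $y \in \liet$; the factors $\pm i$ in the bracket formulas arise because $\alfa(y)$ is purely imaginary on $\liet$. The one subtle point, and the only place where the compactness of $K$ enters critically, is ensuring that the normalization step does not destroy the reality condition: this is possible precisely because $-B(\cdot, \tau\cdot)$ is positive definite, so the required rescaling factor is real and positive.
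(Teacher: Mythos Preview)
Your argument is correct and is essentially the standard construction (the one carried out in Knapp, pp.~353--354). The paper itself does not give a proof of this lemma but simply cites that reference, so there is nothing further to compare.
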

For a proof see e.g. \cite[pp. 353-354]{knapp-beyond}.  Set
\begin{gather}
  \label{Zalfa}
  Z_\alfa = \R u_\alfa \oplus \R v_\alfa.
\end{gather}
Then
\begin{gather*}
  \liek = \liet \oplus \bigoplus_{\alfa \in \roots_+} Z_\alfa .
\end{gather*}

If $\OO$ is an adjoint orbit of $K$ and $x\in \OO$, then
\begin{gather*}
  \qquad T_{x} \OO = \im \ad \ x = \bigoplus_{ \alfa \in E} Z_\alfa
\end{gather*}
where $ E : = \{\alfa \in \root_+ : \alfa(x) \neq 0\}$.  Denote by
$v_\OO$ the vector field on $\OO$ defined by $v\in \liek$.
Explicitely $v_\OO (x) = [v,x]$.  Since we identify $\liek \cong
\liek^*$ we may regard $\OO$ as a coadjoint orbit. As such it is
equipped with a $K$-invariant symplectic form $\om$, named after
Kostant, Kirillov and Souriau, and defined by the following rule.  For
$u, v \in \liek$
\begin{gather*}
  \om_x \left ( u_\OO \left (x \right ), v_\OO \left (x \right)\right) := \sx x, [u,v]\xs.
\end{gather*}
See e.g. \cite[p. 5] {kirillov-lectures}.  $\om$ is a $K$-invariant
symplectic form on $\OO$ and the inclusion $\OO \hookrightarrow \liek$
is the momentum map.

\label{momentum-notation-page}
If $T\subset K$ is a maximal torus, we denote by $W(K,T)$ or simply by
$W$ the Weyl group of $(K,T)$. We let $\pi: \liek \ra \liet$ denote
the orthogonal projection with respect to the scalar product $\sx \ ,
\ \xs= - B$. Its restriction to $\OO$ is denoted by $\momt : \OO \ra
\liet$; it is the momentum map for the $T$-action on $\OO$.  $\polp: =
\momt(\OO)$ is the momentum polytope.  The following convexity theorem
of Kostant \cite{kostant-convexity} is the basic ingredient in the
whole theory.
\begin{teo}[Kostant] \label{Kostant} Let $K$ be a compact connected
  Lie group, let $T \subset K$ be a maximal torus and let $\OO$ be a
  coadjoint orbit. Then $ \polp$ is a convex polytope, $\ext \polp =
  \OO \cap \liet$ and $\ext \polp$ is a unique $W$-orbit.
\end{teo}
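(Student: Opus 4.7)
The plan is to deduce Kostant's theorem from the Atiyah--Guillemin--Sternberg (AGS) convexity theorem applied to the $T$-action on the compact connected symplectic manifold $(\OO,\om)$, for which $\momt$ has already been identified above as the momentum map. The first step is to compute the $T$-fixed point set $\OO^T$: since $\Ad(t)x = x$ for every $t\in T$ is equivalent to $[\liet,x]=0$, and $T$ is a maximal torus, one has $\OO^T = \OO\cap \liet$. Applying AGS then yields that $\polp = \momt(\OO)$ is a convex polytope and
$$ \polp = \conv\bigl(\momt(\OO^T)\bigr) = \conv(\OO\cap \liet), $$
the last equality using that $\momt$ restricts to the identity on $\liet$. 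This proves the first assertion together with the inclusion $\ext\polp \subset \OO\cap\liet$.

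Next, to show that $\OO\cap \liet$ is a single $W$-orbit, take $x,y\in \OO\cap \liet$ with $y=\Ad(k)x$. Since $y\in\liet$ one has $T\subset Z_K(y)^0$, and since $\Ad(k)$ intertwines centralizers also $\Ad(k)T \subset Z_K(y)^0$. Both $T$ and $\Ad(k)T$ are maximal tori of $Z_K(y)^0$ (maximality in $K$ forces maximality in any subgroup containing them), so there exists $h\in Z_K(y)^0$ with $\Ad(h)\Ad(k)T = T$. The element $n := hk$ then lies in $N_K(T)$ and satisfies $\Ad(n)x = \Ad(h)y = y$, so $y\in W\cd x$. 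This gives the third assertion and, in particular, the finiteness of $\OO\cap\liet$.

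For the remaining inclusion $\OO\cap \liet \subset \ext\polp$, fix $x_0 \in \OO\cap \cchamber$ and pick any $\xi \in \chamber$. The standard strict Weyl-chamber inequality $\sx \xi, w x_0\xs < \sx \xi, x_0\xs$ for every $w\in W\setminus\{1\}$ shows that $x_0$ is the unique maximizer of $\sx \,\cd\,,\xi\xs$ on the finite set $\OO\cap \liet$, hence is an exposed, and in particular extreme, point of $\polp$. By $W$-equivariance of $\polp$ combined with the transitivity established above, every element of $\OO\cap\liet$ is extreme, completing the second assertion. The principal obstacle is the invocation of AGS, which is the substantial symplectic input; modulo that, the centralizer argument for $W$-transitivity and the strict chamber inequality are standard facts from compact Lie theory.
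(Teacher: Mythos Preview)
The paper does not supply a proof of this theorem: it is quoted as a known result of Kostant and cited from \cite{kostant-convexity}. Your derivation via the Atiyah--Guillemin--Sternberg theorem is correct and is precisely the approach the paper itself endorses later, in the opening sentence of the proof of Lemma~\ref{proiezione-intersezione} (``We prove this lemma in the same way as Kostant theorem is deduced from the Atiyah--Guillemin--Sternberg theorem''). Historically Kostant's original argument predates AGS and is more elementary, but the AGS route is now the standard one and fits naturally with the symplectic framework set up in the paper.

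One small correction in your last paragraph: the strict inequality $\sx \xi, w x_0\xs < \sx \xi, x_0\xs$ holds whenever $w x_0 \neq x_0$, not for all $w\in W\setminus\{1\}$. Since $x_0$ lies only in the \emph{closed} chamber $\cchamber$, it may sit on a wall and have nontrivial $W$-stabilizer, in which case some $w\neq 1$ give equality. Your intended conclusion, that $x_0$ is the unique maximizer of $\sx\,\cd\,,\xi\xs$ on the finite set $\OO\cap\liet = W\cd x_0$, is unaffected by this and remains valid.
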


There is a unique $K$-invariant complex structure $J$ on $\OO$ such
that $\om$ be a \Keler form. It can be described as follows (see
\cite[p. 113]{huckleberry-DMV} for more information).  Fix a maximal
torus $T$ and a system of positive roots in such that a way $x$
belongs to the closure of the positive Weyl chamber.  Then the complex
structure on $T_x\OO$ is given by the formula
\begin{gather*}
  J u_\alfa =  v_\alfa.
\end{gather*}
Set $G=K^\C$.  The action of $K$ on $\OO$ extends to an action $ G
\times \OO \ra \OO$ which is holomorphic. If $v_\OO$ denotes the
fundamental vector field induced by $v\in \lieg = \liek^\C$, this
implies that
\begin{gather*}
  (iv)_\OO = J v_\OO.
\end{gather*}
Let
\begin{gather*}
  \lieb_- := \liet^\C \oplus \bigoplus_{\alfa \in \roots_+ } \lieg_{-\alfa}
\end{gather*}
denote the negative Borel subalgebra and let $B_-$ be the
corresponding Borel subgroup.   The following lemma is well-known.
\begin{lemma}
  \label{Borel}
  Let $T\subset K$ be a maximal torus and let $\root_+$ be a set of
  positive roots. If $x\in \OO\cap \liet$, then $x\in \cchamber$ if
  and only if  $B_-$ is contained in the
  stabilizer $G_x$.
\end{lemma}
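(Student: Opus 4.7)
The plan is to reduce the statement to a Lie-algebra condition and then test the inclusion root-by-root. Since $B_-$ is connected (being a Borel subgroup of the connected complex group $G$), the inclusion $B_-\subset G_x$ is equivalent to the inclusion $\lieb_-\subset \lieg_x$, where $\lieg_x = \{v\in \lieg : v_\OO(x)=0\}$ is the Lie algebra of the stabilizer for the holomorphic $G$-action on $\OO$. I decompose $\lieb_- = \liet^\C \oplus \bigoplus_{\alfa\in\roots_+} \C X_{-\alfa}$ and check the two types of summand separately. The Cartan piece $\liet^\C$ is automatically in $\lieg_x$: for $v\in\liet$ one has $v_\OO(x)=[v,x]=0$ since $x\in\liet$, and then the relation $(iw)_\OO=Jw_\OO$ gives $(iv)_\OO(x) = 0$ as well. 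So the question reduces to deciding, for each $\alfa\in \roots_+$, whether $(X_{-\alfa})_\OO(x)=0$.

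For this I would write $X_{-\alfa}=-\tfrac{1}{\sqrt 2}(u_\alfa+iv_\alfa)$ in terms of the real vectors of Lemma \ref{knappone} and, using $(iw)_\OO=Jw_\OO$, compute
\[
(X_{-\alfa})_\OO(x) \;=\; -\tfrac{1}{\sqrt 2}\bigl([u_\alfa,x]+J[v_\alfa,x]\bigr) \;=\; \tfrac{\lambda_\alfa}{\sqrt 2}\bigl(v_\alfa-Ju_\alfa\bigr),
\]
where $\lambda_\alfa:=-i\alfa(x)\in \R$ and the second equality uses the bracket formulas $[u_\alfa,x]=-\lambda_\alfa v_\alfa$ and $[v_\alfa,x]=\lambda_\alfa u_\alfa$ from Lemma \ref{knappone}. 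Writing $Ju_\alfa = \epsilon_\alfa v_\alfa$ for a sign $\epsilon_\alfa\in\{\pm 1\}$ (well-defined whenever $\lambda_\alfa\neq 0$, so that $Z_\alfa\subset T_x\OO$), this becomes $(X_{-\alfa})_\OO(x) = \tfrac{\lambda_\alfa}{\sqrt 2}(1-\epsilon_\alfa) v_\alfa$, which vanishes iff $\lambda_\alfa=0$ or $\epsilon_\alfa=+1$.

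The remaining ingredient is the determination of $\epsilon_\alfa$. Evaluating the Kirillov-Kostant-Souriau form gives $\om_x\bigl((u_\alfa)_\OO,(v_\alfa)_\OO\bigr) = \sx x,[u_\alfa,v_\alfa]\xs = \sx x,iH_\alfa\xs = \lambda_\alfa$; combined with the K\"{a}hler positivity $\om(\cdot,J\cdot)>0$, this forces $\epsilon_\alfa = \operatorname{sgn}(\lambda_\alfa)$ whenever $\lambda_\alfa\neq 0$. This recovers the text's formula $Ju_\alfa=v_\alfa$ on those root planes where $\lambda_\alfa>0$ (which covers everything when $x\in\cchamber$) and flips the sign to $Ju_\alfa=-v_\alfa$ on those root planes where $\lambda_\alfa<0$. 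Consequently $(X_{-\alfa})_\OO(x)=0$ iff $\lambda_\alfa\geq 0$, and so $\lieb_-\subset \lieg_x$ iff $-i\alfa(x)\geq 0$ for every $\alfa\in\roots_+$, i.e.\ iff $x\in\cchamber$, as required. The only non-routine point is pinning down the sign $\epsilon_\alfa$ at a point not lying in $\cchamber$ (the text states the formula $Ju_\alfa=v_\alfa$ only for $x\in\cchamber$); this is forced by K\"{a}hler positivity, so it is not a genuine obstacle and both implications fall out of the same computation.
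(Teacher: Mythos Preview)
Your proof is correct. The paper itself does not prove this lemma at all; it simply declares it ``well-known'' and moves on, so there is no approach to compare against. Your computation is the standard one and is carried out carefully, including the point you flag yourself: the paper only states $Ju_\alfa=v_\alfa$ at a point of $\cchamber$, whereas the ``only if'' direction requires knowing $J$ at an arbitrary $x\in\OO\cap\liet$, and you correctly recover the sign $\epsilon_\alfa=\operatorname{sgn}(-i\alfa(x))$ from K\"ahler positivity.

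One small step you use without comment is that $J$ preserves each plane $Z_\alfa\subset T_x\OO$ and hence sends $u_\alfa$ to $\pm v_\alfa$ rather than to some other combination. This holds because $T$ fixes $x$ and acts holomorphically, so $J$ is $T$-equivariant on $T_x\OO$; the $Z_\alfa$ are the isotypic pieces of the $T$-action (distinct positive roots give distinct real $T$-characters), and on each $2$-dimensional $Z_\alfa$ the only $T$-equivariant operators squaring to $-\mathrm{Id}$ are rotation by $\pm 90^\circ$. With that in hand your argument is complete.
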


\section{Group theoretical description of the faces}

\label{group}

In this section we prove that all the faces of a coadjoint orbitope
are coadjoint orbitopes and are exposed. These facts will be used
throughout the rest of the paper.

Let $\OO \subset \liek$ be a coadjoint orbit of $K$.  The
\emph{orbitope} $\c$ is by definition the convex hull of $\OO$.

\begin{lemma} \label{extO=O} $\ext \c = \OO$. Moreover for any face
  $F\subset \c$, $\ext F = F \cap \OO$.
\end{lemma}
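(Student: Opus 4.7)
The plan is to first establish $\ext \c = \OO$ and then deduce the face statement from Lemma \ref{ext-facce}. The inclusion $\ext \c \subset \OO$ is a general fact: since $\OO$ is compact and $\c = \conv (\OO)$ by definition, any point of $\c \setminus \OO$ is a nontrivial convex combination of points of $\OO$ and hence not extreme.

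For the reverse inclusion $\OO \subset \ext \c$, the crucial observation is that $K$-invariance of $\sx \ , \ \xs = -B$ forces all points of $\OO$ to have the same norm, say $r$. Hence $\OO$ lies on the sphere of radius $r$ centered at the origin, and $\c$ lies in the corresponding closed ball. Since the norm coming from an inner product is strictly convex, every point on this sphere is an extreme point of the ball, and a fortiori an extreme point of $\c$. Thus every $x \in \OO$ is extreme in $\c$, completing the first assertion.

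Given any face $F \subset \c$, the second assertion is then immediate from Lemma \ref{ext-facce}: $\ext F = F \cap \ext \c = F \cap \OO$. The only conceptual point in the whole argument is recognising that $K$-invariance of the Killing form confines $\OO$ to a sphere; everything else is elementary convex geometry, so I do not anticipate any real obstacle.
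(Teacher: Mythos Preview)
Your proof is correct and follows essentially the same line as the paper's: $\ext \c \subset \OO$ because extreme points of a convex hull of a compact set lie in that set, $\OO \subset \ext \c$ because the $K$-invariant inner product places $\OO$ on a sphere, and the second assertion follows from Lemma~\ref{ext-facce}. The only minor difference is that the paper remarks in passing that the points of $\OO$ are in fact \emph{exposed} extreme points, but this plays no role in the lemma itself.
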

\begin{proof}
  This fact is common to all orbitopes \cite
  [Prop. 2.2]{sanyal-sottile-sturmfels-orbitopes}.  By construction
  $\ext \c \subset \OO$. On the other hand $\OO$ lies on a sphere,
  hence all points of $\OO$ are exposed extreme points.  This proves
  the first assertion. The second follows from the first and from
  Lemma \ref {ext-facce}.
\end{proof}

A submanifold $M \subset \R^n$ is called \emph{full} if it is not
contained in any proper affine subspace of $\R^n$.

\begin{lemma}
  \label{full}
  Let $K$ be a compact connected semisimple Lie group and let $\OO
  \subset \liek$ be a coadjoint orbit.  The orbit $\OO$ is full if and
  only if every simple factor of $K$ acts nontrivially on $\OO$.
\end{lemma}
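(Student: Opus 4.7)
The plan is to decompose $K$ into simple factors, reduce both conditions (fullness of $\OO$ and nontriviality of each factor's action) to statements about each factor separately, and then settle the simple case via irreducibility of the adjoint representation.

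First, write $\liek = \liek_1 \oplus \cdots \oplus \liek_s$ as a direct sum of simple ideals, let $K_i \subset K$ denote the connected subgroup with Lie algebra $\liek_i$, and recall that $[\liek_i, \liek_j] = 0$ for $i\neq j$. Fixing $x \in \OO$ and writing $x = x_1 + \cdots + x_s$ with $x_i \in \liek_i$, the orbit then splits as
\begin{gather*}
  \OO \;=\; \OO_1 \times \cdots \times \OO_s, \qquad \OO_i := K_i \cd x_i \subset \liek_i.
\end{gather*}
Under this identification the factor $K_i$ acts trivially on $\OO$ iff $[\liek_i, x] = 0$ iff $x_i$ lies in the centre of $\liek_i$; since $\liek_i$ is simple its centre is trivial, so this is equivalent to $x_i = 0$.

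Next, the elementary identity $\aff(A \times B) = \aff(A) \times \aff(B)$ for nonempty $A, B$ yields $\aff(\OO) = \aff(\OO_1) \times \cdots \times \aff(\OO_s)$, so $\OO$ is full in $\liek$ iff each $\OO_i$ is full in $\liek_i$. This reduces the lemma to the case in which $K$ is simple. In that case $\aff(\OO)$ is $K$-invariant (since $\OO$ is), and taking $x \in \OO$ as base point, the direction subspace $V_0 := \aff(\OO) - x$ is a $K$-invariant linear subspace of $\liek$, because $k \cd x - x \in V_0$ for every $k \in K$ gives $k V_0 = V_0 + (k\cd x - x) = V_0$. Irreducibility of the adjoint representation of a compact simple Lie group then forces $V_0 \in \{0, \liek\}$; the case $V_0 = 0$ means $\OO = \{x\}$, so $x$ is central and hence $x = 0$.

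The main subtlety is the bookkeeping around the product decomposition, in particular the verification of $\aff(A \times B) = \aff(A) \times \aff(B)$ and the fact that the direction $V_0$ (rather than $\aff(\OO)$ itself) is $K$-invariant. Both are short direct checks but easy to overlook.
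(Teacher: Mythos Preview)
Your argument is correct, but it follows a different route from the paper's. You split $\OO$ as a product $\OO_1 \times \cdots \times \OO_s$ along the simple factors, reduce fullness to each factor via $\aff(A\times B)=\aff(A)\times\aff(B)$, and then in the simple case invoke irreducibility of the adjoint representation to force the direction space $V_0$ to be $0$ or $\liek_i$. The paper instead works globally: it shows directly that the affine hull $M=x+V$ of $\OO$ actually passes through the origin (by writing $x=x_0+x_1$ with $x_1\in V^\perp$ and observing that $x_1$ is $\Ad$-fixed, hence zero since $K$ is semisimple), so that $M=V$ is a $K$-invariant linear subspace, i.e.\ an ideal, and is therefore a sum $\bigoplus_{i\in I}\liek_i$ of simple ideals; one then checks that $j\in I$ exactly when $K_j$ acts nontrivially. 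The paper's approach avoids the product-of-affine-hulls bookkeeping and yields the slightly stronger intermediate fact that $\aff(\OO)$ is an ideal (not merely an affine subspace), which is convenient later. Your approach is more modular and makes the role of each simple factor completely explicit, at the cost of the extra product lemma; the key representation-theoretic input (that a semisimple compact group has no nonzero $\Ad$-fixed vector, and that the adjoint representation of a simple compact group is irreducible) is essentially the same in both.
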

\begin{proof}
  Fix $x \in \OO$.  Let $M$ denote the affine hull of $\OO$ in $\liek$
  and let $V$ be the associated linear subspace, i.e. $M = x + V$.  We
  claim that $M$ contains the origin.  Since $\OO$ is $ K$-invariant,
  so are $M$ and $V$. Hence $V$ is an ideal and $V^\perp$ is an ideal
  as well. Write $x= x_0 +x _1$, with $x_0 \in V$ and $x_1 \in
  V^\perp$.  For any $g\in K$, $g x -x \in V$, $gx_0 - x_0 \in V$ and
  $gx_1 - x_1 \in V^\perp$. So $gx_1 - x_1 \in V\cap V^\perp$,
  i.e. $gx_1 =x_1$. This means that $x_1$ is a fixed point of the
  adjoint action. Since $K$ is semisimple, $x_1=0$, $x\in V$ and $M=V$
  as desired.  Let $K_i$, $i=1, \lds, r$ be the simple factors of
  $K$. Since $V$ is an ideal, $V = \bigoplus_{i\in I} \liek_i$ for
  some subset $I $ of $ \{1, \lds, r\}$.  It is clear that \changed{
    $\liek_j\cap V = \{0\}$} if and only if $[\liek_j, V]=0$ if and
  only if $K_{\changed{j}}$ acts trivially on $\OO$. This proves the
  first statement.
\end{proof}

Let $H$ be a compact connected Lie group (not necessarily semisimple)
and let $\OO \subset \lieh$ be an orbit.  There is a splitting of the
algebra
\begin{gather}
  \label{scomposizione-h}
  \lieh =\liez \oplus \liek_1 \oplus \cds \oplus \liek_r
\end{gather}
where $\liez$ is the center of $\lieh$ and $\liek_i$ are simple
ideals. Let $K_i$ be the closed connected subgroups of $H$ with $\Lie
K_i = \liek_i$. So $H=Z \cdot K_1 \cds K_r$, where $Z$ is the
connected component of the identity in the center of $H$. Any two of
these subgroups have finite intersection. We can reorder the factors
in such a way that $K_i$ acts nontrivially on $\OO$ if and only if $
1\leq i \leq q$ for some $q$ between $1$ and $r$.  Set
\begin{gather*}
  L := K_1 \cds K_q \qquad L' := K_{q+1} \cds K_r .
\end{gather*}
By construction there is a decomposition
\begin{gather}
  \label{eq:H-dec}
  H = Z \cd L \cd L'.
\end{gather}
Any two factors in this decomposition have finite intersection.

\begin{lemma}\label{lemma-z0}
  For any $x\in \OO $, there is a unique decomposition $x=x_0 + x_1$
  with $x_0 \in \liez$ and $x_1 \in \liel$. Moreover
  \begin{gather*}
    \OO = H \cdot x = x _0 + L \cd x_1,
  \end{gather*}
  the affine span of $\OO$ is $x_0 + \liel$ and $x_0$ belongs to
  $\relint \c$.
\end{lemma}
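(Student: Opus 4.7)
The plan is to carry out each claim in turn, using the decomposition \eqref{eq:H-dec} together with Lemmas \ref{full} and \ref{convex-orbit}.

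First I would verify the existence and uniqueness of the decomposition $x = x_0 + x_1$ with $x_0 \in \liez$ and $x_1 \in \liel$. Starting from \eqref{scomposizione-h}, write $x = z + y_1 + \cdots + y_r$ with $z \in \liez$ and $y_i \in \liek_i$. For $i > q$ the group $K_i$ acts trivially on $\OO$, so $[\liek_i, x] = 0$. Since the $\liek_i$ commute pairwise and $[\liek_i, \liez] = 0$, this forces $[\liek_i, y_i] = 0$, i.e.\ $y_i$ lies in the center of the simple algebra $\liek_i$, hence $y_i = 0$. Thus $x = x_0 + x_1$ with $x_0 := z$ and $x_1 := y_1 + \cdots + y_q \in \liel$, and uniqueness is immediate from the directness of \eqref{scomposizione-h}.

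Next I would establish the formula $\OO = x_0 + L \cd x_1$. Using the decomposition $H = Z \cd L \cd L'$ from \eqref{eq:H-dec}, note that $Z$ acts trivially on $\lieh$ since it is central in $H$, and $L'$ acts trivially on $\OO$ by definition of $q$. Therefore $\OO = L \cd x$. Because $\liez \subset \lieh$ is $H$-invariant and $L$ fixes $\liez$ pointwise (again by centrality), $L \cd (x_0 + x_1) = x_0 + L \cd x_1$, giving the claim. This immediately yields $\OO - x_0 = L \cd x_1 \subset \liel$, so the affine hull of $\OO$ is contained in $x_0 + \liel$. For the reverse inclusion I would apply Lemma \ref{full} to the semisimple group $L$ acting on $\liel$: each simple factor $K_i$ ($1 \le i \le q$) acts nontrivially on $\OO$, and hence on $L \cd x_1$ (which differs from $\OO$ only by the translation $x_0$), so $L \cd x_1$ is full in $\liel$ and its affine span is all of $\liel$.

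Finally, to prove $x_0 \in \relint \c$, I would apply Lemma \ref{convex-orbit} to the orbit $L \cd x_1$ in the representation space $\liel$. The lemma produces an $L$-fixed point of $\conv(L \cd x_1)$; since $L$ is semisimple, the only $L$-fixed vector in $\liel$ is $0$, so $0 \in \conv(L \cd x_1)$ and by the second part of the same lemma $0$ lies in the relative interior. Translating by $x_0$ gives $x_0 \in \relint(x_0 + \conv(L \cd x_1)) = \relint \c$, since $\c = \conv \OO = x_0 + \conv(L \cd x_1)$ by the previous step.

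The argument is largely bookkeeping; the only subtle point is the vanishing of the $\liek_i$-components for $i > q$, where one must use simplicity of $\liek_i$ to upgrade ``acts trivially on $\OO$'' to ``has no component of $x$ in $\liek_i$''. Once that is in place, the remaining assertions follow mechanically from the two lemmas already available.
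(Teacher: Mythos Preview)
Your proof is correct and follows essentially the same route as the paper: both arguments use the decomposition \eqref{scomposizione-h} together with semisimplicity to kill the $\liel'$-component of $x$, invoke Lemma \ref{full} for the affine span, and finish with Lemma \ref{convex-orbit}. The only cosmetic difference is in the last step: you apply Lemma \ref{convex-orbit} to $L$ acting on $\liel$ and use semisimplicity of $L$ to force the fixed point to be $0$, whereas the paper observes directly that $x_0\in\liez$ is $H$-fixed and applies Lemma \ref{convex-orbit} to $H$ acting on $\OO$.
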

\begin{proof}
  Write $x=x_0 + x_1 + x_2$ with $x_0 \in \liez$, $x_1\in \liel$ and
  $x_2 \in \liel'$. Since $L' \cdot x= x$, the component $x_2$ is
  fixed by $L'$. Since $L'$ is semisimple, this forces $x_2=0$.  It
  follows immediately that $H\cd x = x_0 + L \cd x_1$. By definition
  all simple factors of $L$ act nontrivially on $L\cd x_1$, hence the
  orbit $L\cd x_1$ is full in $\liel$ by Lemma \ref{full}. This proves
  that $\aff (\c) = x_0 + \liel$.  Since $\OO \subset x_0 + \liel$ and
  $\liel \perp x_0$, $x_0$ is the closest point to the origin. Such a
  point is unique because $\c$ is convex. \changed{Since $x_0\in
    \liez$, it is fixed by $H$.}  The last statement follows from
  Lemma \ref{convex-orbit}.
\end{proof}

The statement about the affine span is equivalent to $L\cd x_1$ being
full in $\liel$.  Therefore after possibily replacing $K$ by $L$ and
translating by $x_0$ we can assume for most part of the paper that
$\OO$ is full.

We are interested in the facial structure of $\c$ and we start by
considering the structure of exposed faces.

\begin{lemma}
  \label{sono-simplettico}
  Assume that $K$ is a compact connected Lie group, that $H\subset K$
  is a connected Lie subgroup of maximal rank and that $\OO$ is a
  coadjoint orbit of $K$. Then
  \begin{enumerate}
  \item $\OO \cap \lieh$ is a union of finitely many $H$-orbits;
  \item if $H$ is the centralizer of a torus, then $\OO \cap \lieh$ is
    a symplectic submanifold of $\OO$.
  \end{enumerate}

\end{lemma}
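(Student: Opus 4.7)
The plan is to exploit the fact that a maximal-rank subgroup $H$ contains a maximal torus of $K$ and that, in case (b), $H$ can be recovered as the fixed-point set of the torus whose centralizer it is.

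First, fix a maximal torus $T\subset H$; since $H$ has maximal rank in $K$, $T$ is also a maximal torus of $K$. For part~(a), given any $x\in \OO\cap \lieh$, the element $x$ lies in the Lie algebra of $H$, so by the standard conjugacy theorem applied \emph{inside} $H$ there exists $h\in H$ with $\Ad(h)x\in \liet$. Hence every $H$-orbit in $\OO\cap \lieh$ meets the finite set $\OO\cap \liet$, which is a single Weyl group orbit by Kostant's theorem (Theorem~\ref{Kostant}). This immediately bounds the number of $H$-orbits by $|\OO\cap \liet|=|W(K,T)\cdot x|$, which is finite.

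For part~(b), write $H=Z_K(S)$ for a torus $S\subset K$. On the Lie algebra level this gives $\lieh=\{X\in \liek:[X,Y]=0 \text{ for all } Y\in \lies\}$. Regarding $\OO$ as a coadjoint orbit and using the identification $\liek\cong \liek^*$, a point $x\in \OO$ is fixed by the $S$-action on $\OO$ if and only if $\Ad(s)x=x$ for all $s\in S$, which (since $S$ is connected) is equivalent to $[\lies,x]=0$, i.e.\ to $x\in \lieh$. Thus
\begin{gather*}
\OO\cap \lieh=\OO^S,
\end{gather*}
the fixed-point set of $S$ acting on $\OO$.

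The remaining point is the standard fact that if a compact (in particular, a torus) group acts symplectically on a symplectic manifold, the connected components of its fixed-point set are symplectic submanifolds; here $\om$ is $K$-invariant and hence $S$-invariant, so this applies. I would sketch this by observing that at a fixed point $x\in \OO^S$ the tangent space $T_x\OO$ decomposes under the linearized $S$-action as $T_x\OO^S\oplus N$, where $N$ is the sum of nontrivial isotypical components; $\om_x$ restricted to $T_x\OO^S$ is nondegenerate because $N$ is $\om_x$-orthogonal to $T_x\OO^S$ (any $\om_x$-pairing of an $S$-invariant vector with a nontrivial isotypical component is $S$-invariant and must lie in a trivial summand, forcing it to vanish). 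The main (minor) obstacle is merely packaging this linear-algebra argument cleanly; the finiteness in part~(a) and the identification $\OO\cap \lieh=\OO^S$ are straightforward once $T\subset H$ is fixed.
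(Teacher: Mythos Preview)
Your proof is correct and follows essentially the same route as the paper: both parts use a maximal torus $T\subset H$ to reduce (a) to the finiteness of $\OO\cap\liet$, and for (b) both identify $\OO\cap\lieh$ with the fixed-point set $\OO^S$ and invoke the standard fact that fixed points of a symplectic torus action form a symplectic submanifold. You give a more detailed justification of that last fact than the paper does, but the argument is otherwise the same.
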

\begin{proof}
  Let $T$ be a maximal torus of $K$ contained in $H$.  Since $\OO \cap
  \lieh$ is an $H$-invariant subset of $H$ and $T$ is a maximal torus
  of $H$ we have $\OO \cap \lieh = H \cd (\OO \cap \liet)$. But
  $\OO\cap \liet$ coincides with ffan orbit of the Weyl group and is
  therefore finite. Hence $\OO\cap \lieh$ is a finite unione of
  $H$-orbits.  This proves the first statement.  For the second assume
  that $H = Z_K(S)$ where $S \subset K$ is a torus. Then $\OO\cap
  \lieh = \OO^S$ is the set of fixed points of $S$, hence it is a
  symplectic submanifold of $\OO$.
\end{proof}

We start the analysis of the face structure of $\c$ by looking at the
esposed faces. At the end of the section we will prove that all faces
are exposed.

Let $u$ be a nonzero vector in $\liek$ and let $\mom_u: \OO \ra \R$ be
the function $\mom_u(x) : = \sx x, u\xs$.  Set
\begin{gather*}
  \ml(\mom_u) : =\{ x \in \OO: \mom_u(x) = \max_\OO \mom_u\}.
\end{gather*}
$\mom_u$ is just the component of the momentum map along $u$.  Then for
$x\in \OO$ and $u,v \in \liek$
\begin{gather*}
  d \mom _u(x) (v_\OO) = \om_x (u_\OO(x), v_\OO(x)) = \sx x, [u,v] \xs
  = \sx [x,u],v\xs.
\end{gather*}
This implies that $x\in \OO$ is a critical point of $\mom_u$ if and
only if $x \in \liez_\liek(u)$, i.e.
\begin{gather*}
  \Crit(\mom_u) = \OO\cap \liez_\liek(u).
\end{gather*}

\begin{lemma}\label{hesso}
  Let $H = Z_K( u)$ be the centraliser of $u$ in $K$ and let $F_u(\c)$
  be the exposed face of $\c$ defined by $u$.  Then
  \begin{enumerate}
  \item $\ml(\mom_u)$ is an $H$-orbit;
  \item $\ext F_u (\c) = \ml(\mom_u)$, so $\ext F_u(\c)$ is an
    $H$-orbit;
  \item $F_u (\c) \subset \liez_\liek(u)$.
  \end{enumerate}
\end{lemma}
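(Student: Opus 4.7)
All three statements are invariant under the $K$-conjugation $u\mapsto \Ad(g)u$, which sends $H$ to $gHg^{-1}$ and $\ml(\mom_u)$ to $g\cdot\ml(\mom_u)$. After conjugating, assume $u\in\liet$ and choose positive roots so that $u\in\cchamber$. Let $x_0$ be the unique dominant element of $\OO\cap\liet$, so that $\OO\cap\liet = W\cdot x_0$ by Kostant's theorem (Theorem \ref{Kostant}). Since $T\subset H$, the Weyl group $W(H,T)$ is defined and equals the stabiliser $W_u\subset W$ of $u$.

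\textbf{Part (a).} The formula $\Crit(\mom_u) = \OO\cap\liez_\liek(u) = \OO\cap\lieh$ established before the statement gives $\ml(\mom_u)\subset\OO\cap\lieh$. For $h\in H$ and $x\in\OO$, the $H$-centrality of $u$ yields
\begin{gather*}
\mom_u(h\cdot x) = \sx h\cdot x,u\xs = \sx x,\Ad(h^{-1})u\xs = \sx x,u\xs,
\end{gather*}
so $\mom_u$ is constant on $H$-orbits and $\ml(\mom_u)$ is $H$-invariant. By Lemma \ref{sono-simplettico}(a), $\OO\cap\lieh = H\cdot(\OO\cap\liet) = H\cdot Wx_0$ is a finite union of $H$-orbits. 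The main step is the classical root-theoretic fact that for $u,x_0\in\cchamber$ and $w\in W$,
\begin{gather*}
\sx wx_0,u\xs \leq \sx x_0,u\xs,
\end{gather*}
with equality if and only if $wx_0 \in W_u\cdot x_0$. This follows by expressing $x_0 - wx_0$ (via the scalar product identification $\liek\cong\liek^*$) as a non-negative combination of simple coroots and evaluating against $u\in\cchamber$: equality forces only those simple coroots dual to simple roots in $\simple_0 := \{\alfa\in\simple : \alfa(u)=0\}$ to occur, and $\simple_0$ generates $W_u$. Granting this, the maximum of $\mom_u$ on $\OO\cap\liet$ equals $\sx x_0,u\xs$ and is attained precisely on $W_u\cdot x_0$, which lies in the single $H$-orbit $H\cdot x_0$ (as $W_u = N_H(T)/T \subset H/T$). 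Since $\mom_u$ is $H$-invariant, $\ml(\mom_u) = H\cdot x_0$.

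\textbf{Parts (b) and (c).} The support value satisfies $h_\c(u) = \max_\c\sx\cdot,u\xs = \max_\OO\sx\cdot,u\xs = \max\mom_u$, because a linear functional on $\c = \conv(\OO)$ attains its maximum at an extreme point and $\ext\c = \OO$ by Lemma \ref{extO=O}. Hence $F_u(\c)\cap\OO = \{x\in\OO : \mom_u(x) = h_\c(u)\} = \ml(\mom_u)$, and combining with the identity $\ext F_u(\c) = F_u(\c)\cap\OO$ from the same lemma gives (b); the $H$-orbit statement is then (a). For (c), the compact convex set $F_u(\c)$ equals the convex hull of its extreme points, so $F_u(\c) = \conv(\ml(\mom_u)) \subset \liez_\liek(u)$, since $\ml(\mom_u) \subset \OO\cap\liez_\liek(u)$ and $\liez_\liek(u)$ is a linear subspace.

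\textbf{Main obstacle.} The only non-formal point is the Weyl-group inequality with its characterisation of equality, which is the root-theoretic input driving (a); everything else is bookkeeping. An alternative, less computational route is to identify the exposed face $F_u(\polp)$ of the Kostant polytope directly as $\conv(W_u\cdot x_0)$ via general face theory, and transport this through the $H$-orbit structure of $\OO\cap\lieh$ provided by Lemma \ref{sono-simplettico}.
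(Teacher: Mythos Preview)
Your arguments for (b) and (c) match the paper's essentially verbatim. For (a) the approaches diverge: the paper invokes Atiyah's theorem that the level sets of $\mom_u$ are connected, so the maximum level set $\ml(\mom_u)$ is a single connected component of $\Crit(\mom_u)=\OO\cap\lieh$, hence (by Lemma~\ref{sono-simplettico}) a single $H$-orbit. You instead reduce to $u\in\cchamber$ and use the Weyl-group inequality $\sx wx_0,u\xs\le\sx x_0,u\xs$ together with its equality case to pin down $\ml(\mom_u)\cap\liet=W_u\cdot x_0$ directly. This buys you something the paper obtains only later: you identify the maximising orbit explicitly as $H\cdot x_0$ with $x_0$ the dominant point, which is precisely the content of Lemma~\ref{hesso-2} (there proved via a Hessian computation). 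Conversely the paper's route is cleaner in that it needs no root-theoretic input at this stage.

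One caution on your sketch of the equality case: from $\sx x_0-wx_0,u\xs=0$ you correctly deduce that $x_0-wx_0$ lies in the span of $\{iH_\alpha:\alpha\in\simple_0\}$, but the further step to $wx_0\in W_u\cdot x_0$ is not immediate from ``$\simple_0$ generates $W_u$'' alone. One clean way to close this is to move $wx_0$ by $W_u$ into the $\simple_0$-dominant chamber and then use that two points of $W\cdot x_0$ with the same $\liet_{\simple_0}$-component and the same norm, both $\simple_0$-dominant, must coincide (pair $x_0-w'wx_0$ with $x_0+w'wx_0$); alternatively appeal to the standard description of the faces of $\conv(W\cdot x_0)$ via parabolic cosets. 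The statement is certainly true and well known, so this is a matter of completeness rather than correctness.
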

\begin{proof}
  By Atiyah theorem \cite{atiyah-commuting} the level sets of $\mom_u$
  are connected. In particular $\ml(\mom_u)$ is a connected component
  of $\Crit(\mom_u)$. By the previous lemma it is an $H$-orbit. This
  proves (i).  Let $h_\c $ denote the support function of $\c$, see
  \eqref{def-support}.  Since $\sx \cd , u\xs $ is a linear function,
  its maximum on $\c$, that is $h_\c (u)$, is attained at some extreme
  point, i.e. on $\OO$. Hence
  \begin{gather*}
    \max_{\OO} \mom_u = h_\c (u).
  \end{gather*}
  By Lemma \ref{extO=O} $\ext F_u (\c) = F_u (\c) \cap \OO = \{x\in
  \OO : \sx x, u \xs = h_\c (u) \} = \ml(\mom_u)$. It follows
  immediately that $F_u (\c) = \conv (\ml(\mom_u))$.  Finally (iii)
  follows from the fact that $\ml(\mom_u) \subset \Crit(\mom_u) = \OO
  \cap \liez_\liek(u)$.
\end{proof}

\begin{lemma}\label{hesso-2}
  Fix a maximal torus $T \subset K$, a nonzero vector $u \in \liet$
  and a point $x\in \OO \cap \liet $. Then $x \in \Crit(\mom_u)$ and
  $x$ is a maximum point of $\mom_u$ if and only if there is a Weyl
  chamber in $\liet$ whose closure contains both $x$ and $u$.
\end{lemma}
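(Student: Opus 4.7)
The critical-point half is automatic: since $x,u\in\liet$ commute, $x\in\liez_\liek(u)=\Crit(\mom_u)$ by the description of $\Crit(\mom_u)$ obtained just before Lemma~\ref{hesso}. For the maximization, because $u\in\liet$ one has $\mom_u(y)=\sx y,u\xs=\sx\momt(y),u\xs$ for every $y\in\OO$, so by Theorem~\ref{Kostant} the image $\momt(\OO)=\polp=\conv(W\cd x)$, and a linear functional on a polytope attains its maximum at a vertex. Thus the lemma reduces to the purely Weyl-group statement: for $x,u\in\liet$, the equality $\sx x,u\xs=\max_{w\in W}\sx wx,u\xs$ holds if and only if $x$ and $u$ lie in the closure of a common Weyl chamber.

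For the ``if'' direction of this reduced statement, a simultaneous $W$-translation allows me to assume $x,u\in\cchamber$. I will invoke the classical fact that $x-wx$ is a non-negative real linear combination of positive roots whenever $x$ is dominant; pairing with $u$, for which $\sx\alpha,u\xs\ge 0$ on $\root_+$, yields $\sx x-wx,u\xs\ge 0$ at once.

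For the ``only if'' direction, fix a chamber $\chamber$ with $u\in\cchamber$, let $x^+$ be the unique representative of $W\cd x$ in $\cchamber$, and set $S:=\{\alpha\in\simple : \sx\alpha,u\xs=0\}$. Testing maximality against each simple reflection $s_\alpha$ gives $\sx\alpha^\vee,x\xs\sx\alpha,u\xs\ge 0$, which forces $\sx\alpha,x\xs\ge 0$ for every $\alpha\in\simple\setminus S$. To handle the simple roots in $S$ I use the fact that the stabilizer $W_u$ coincides with $W_S:=\langle s_\alpha : \alpha\in S\rangle$ and therefore preserves the set of maximizers of $\sx\cd,u\xs$. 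Decomposing $\liet=V_S\oplus V_S^\perp$ with $V_S=\spam S$, I pick $w\in W_S$ sending the $V_S$-component of $x$ into the closed positive $W_S$-chamber of $V_S$; then $\sx\alpha,wx\xs\ge 0$ holds for $\alpha\in S$ by construction, and for $\alpha\in\simple\setminus S$ by reapplying the simple-reflection test to the new maximizer $wx$. Hence $wx\in\cchamber\cap W\cd x=\{x^+\}$, so $x=w\meno x^+$ with $w\meno\in W_u$, and the closed chamber $w\meno\cchamber$ contains both $x$ and $u$.

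The main delicate point is this ``only if'' direction when $u$ is singular: simple reflections constrain $x$ only across the walls transversal to $u$, and the Weyl-subgroup trick above is needed both to identify the closed chambers meeting $u$ as the $W_u$-translates of $\cchamber$ and to move $x$ into one of them without destroying the inequalities already obtained.
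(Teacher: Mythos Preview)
Your proof is correct and takes a genuinely different route from the paper. The paper proves the equivalence locally: it invokes Frankel's theorem to identify ``maximum'' with ``negative semidefinite Hessian'', then computes $D^2\mom_u(x)$ explicitly via the operator $\ad u \circ (\ad x|_{T_x\OO})^{-1}$, whose restriction to each $Z_\alpha$ is multiplication by $\alpha(u)/\alpha(x)$; the sign condition $\alpha(u)\alpha(x)\ge 0$ for all $\alpha\in\root$ is then read off directly. You instead globalize immediately: using Kostant's theorem you reduce the question to maximizing a linear functional over the finite set $W\cd x$, and then argue purely in the Weyl group, handling the singular case by the $W_u=W_S$ trick. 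Your approach is more elementary in that it avoids Morse--Bott theory and the Hessian computation entirely, and it makes transparent that the lemma is really a statement about the polytope $\polp$ rather than about $\OO$; the paper's approach, on the other hand, yields the Hessian formula as a by-product and connects with the Duistermaat--Kolk--Varadarajan and Heckman analysis cited there, which is useful elsewhere in the paper's symplectic framework.
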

\begin{proof}
  By assumption $x\in \liet \subset \liez_\liek (u) $ and $
  \liez_\liek (u) = \Crit(\mom_u)$.  To check the second assertion
  recall that $\mom_u$ is a Morse-Bott function with critical points
  of even index (this is Frankel theorem, see e.g. \cite[Thm. 2.3,
  p. 109]{audin-torus-actions} or
  \cite[p. 186]{mcduff-salamon-symplectic}) and any local maximum
  point is an absolute maximum point (see
  e.g. \cite[p. 112]{audin-torus-actions}). Therefore $x$ is a maximum
  point if and only if the Hessian $D^2\mom_u (x)$ is negative
  semidefinite.  Recall that $ T_{x} \OO = \im \ad x$ and that
  \begin{gather*}
    f:=\ad x \restr{T_x\OO} : T_x\OO \ra T_x\OO
  \end{gather*}
  is invertible.  If $w\in T_{x} \OO$, then $w= z_\OO(x)=[z,x]$ for
  some $z\in \liek$. The vector $z$ can be chosen (uniquely) inside
  $T_x\OO$, i.e.  $z = - f\meno (w)$.  Set $ \ga(t) : = \Ad(\exp tz)
  \cdot x$.  Then $\ga(0) = x$, $\dot{\ga}(0) = [z,x]=w$,
  $\ddot{\ga}(0) = [z, [z, x]]$, so
  \begin{gather*}
    D^2\mom_u(x)(w, w) = \dfrac{\mathrm{d^2}}{\mathrm{dt^2}} \bigg
    \vert _{t=0} h(\ga(t)) = \sx \ddot{\ga}(0), u\xs =
    \sx [z,x], [u,z]\xs = \\
    =\sx w, [u,z]\xs = - \sx w, \ad u \circ f \meno (w) \xs.
  \end{gather*}
  (One can prove the same formula much more generally and by a more
  geometric argument, see
  \cite[Prop. 2.5]{heinzner-schwarz-stoetzel}.)  Thus the quadratic
  form $D^2\mom_u (x)$ is negative semidefinite if and only if the
  operator $\ad \, u \circ f\meno$ is positive semidefinite.  This
  operator preserves each $Z_\alfa$ and its restriction to $Z_\alfa$
  is just multiplication by $\alfa(u) / \alfa (x)$. Hence it is
  positive semidefinite iff and only iff $\alfa (u) \alfa(x) \geq 0$
  for any $\alfa \in \root$. This is equivalent to the condition that
  $x$ and $u$ lie in the closure of some Weyl chamber (see e.g. \cite
  [p. 11] {heckman-thesis}).
\end{proof}

The computation above goes back to the work
\cite{duistermaat-kolk-varadarajan} of Duistermaat, Kolk and
Varadarajan and to Heckman's thesis \cite{heckman-thesis}.

\begin{lemma}
  \label{facciona-esposta}
  Let $F = F_u (\c) $ be an exposed face of $ \c$. Set
  $S:=\overline{\exp(\R u) } $ and $H=Z_K(S)$. Then (a) $S$ is a
  nontrivial torus and fixes $F$ pointwise, (b) $\ext F$ is an adjoint
  orbit of $H:= Z_K(S)$, (c) $F \subset \lieh$.
\end{lemma}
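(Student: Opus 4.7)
The plan is essentially to reduce everything to Lemma \ref{hesso}, once one identifies $Z_K(S)$ with $Z_K(u)$.

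For part (a), I would first argue that $S$ is a nontrivial torus: since $u\neq 0$ the one-parameter subgroup $\exp(\R u)\subset K$ is nontrivial, and a standard fact says that the closure of a one-parameter subgroup in a compact Lie group is a torus. Next, to show that $S$ fixes $F$ pointwise, I would take $x\in F$ and invoke Lemma \ref{hesso}(iii) to conclude $x\in\liez_\liek(u)$, i.e.\ $[u,x]=0$. Then
\begin{gather*}
  \Ad(\exp tu)\cdot x = e^{t\,\ad u}\cdot x = x \qquad\text{for all }t\in\R,
\end{gather*}
and passing to the closure, every element of $S$ fixes $x$.

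The key bookkeeping step is the identification $Z_K(S)=Z_K(u)$: an element $g\in K$ satisfies $\Ad(g)u=u$ iff it commutes with $\exp(tu)$ for every $t\in\R$, and by continuity this is equivalent to commuting with every element of $S=\overline{\exp(\R u)}$. Hence $H=Z_K(S)=Z_K(u)$, and its Lie algebra is precisely $\liez_\liek(u)$.

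With this identification, (b) is immediate from Lemma \ref{hesso}(ii), which says that $\ext F_u(\c)=\ml(\mom_u)$ is an orbit of $Z_K(u)=H$. Similarly, (c) follows from Lemma \ref{hesso}(iii): $F\subset\liez_\liek(u)=\Lie H=\lieh$.

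There is really no hard step here; the lemma is a packaging result that converts the centraliser-of-an-element statement in Lemma \ref{hesso} into a centraliser-of-a-torus statement, which is the form needed later (for instance to invoke Lemma \ref{sono-simplettico}(ii) and to apply the holomorphic/parabolic machinery that follows). The only mildly delicate point is the passage from $\exp(\R u)$ to its closure $S$, but once it is noted that $Z_K(\cdot)$ is a closed condition on the group element, and that $\liez_\liek(u)=\Lie Z_K(u)$ in a compact Lie group, everything falls into place.
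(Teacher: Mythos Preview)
Your proof is correct and follows essentially the same route as the paper's own argument: both reduce everything to Lemma~\ref{hesso} and the identification $Z_K(S)=Z_K(u)$. If anything, you are slightly more explicit than the paper, which leaves the equality $Z_K(S)=Z_K(u)$ implicit and phrases part~(a) via $\ext F\subset\Crit(\mom_u)$ rather than directly invoking $F\subset\liez_\liek(u)$ from Lemma~\ref{hesso}(iii).
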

\begin{proof}
  Since $u\neq 0$ by the definition of exposed face, $S$ is a
  nontrivial torus. (b) follows from Lemma \ref{hesso}. Moreover $\ext
  F = \ml(\mom_u) \subset \Crit (\mom_u)$. Since $\mom_u$ is the
  Hamiltonian function of the fundamental vector field on $\OO$
  associated to $ u $, $\ext F$ is fixed by $\exp(\R u)$ hence by $S$,
  thus proving (a). Finally $\ext F \subset \liez_\liek(u) = \lieh$ by
  Lemma \ref{hesso}.
\end{proof}

\begin{teo}\label{facciona-orbita}
  Let $F$ be a proper face of $\c$. Then there is a nontrivial torus
  $S \subset K$ with the following properties: (a) $S$ fixes $F$
  pointwise, (b) $\ext F$ is an adjoint orbit of $H:= Z_K(S)$, (c) $F
  \subset \lieh$, (d) $g \cd F =F$ for any $g\in H$.
\end{teo}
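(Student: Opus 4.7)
The plan is to deduce the theorem from Lemma~\ref{facciona-esposta} by induction on the codimension $\dim \c - \dim F$, using Lemma~\ref{face-chain} to produce a maximal chain
$F = F_0 \subsetneq F_1 \subsetneq \cds \subsetneq F_k = \c$.
The key observation is that a facet of any compact convex body is automatically exposed: pick $x \in \relint F$, which is a boundary point of $\c$; apply the supporting hyperplane theorem inside $\aff \c$ to obtain an exposed face $F_u(\c)$ through $x$; and use Theorem~\ref{schneider-facce} together with the maximality of $F$ among proper faces to conclude $F = F_u(\c)$. When $F$ is a facet this is the base case: Lemma~\ref{facciona-esposta} produces a nontrivial torus $S$ satisfying (a), (b), (c), and (d) is automatic since $F = \conv(\ext F)$ is $H$-stable as soon as $\ext F$ is an $H$-orbit.

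For the inductive step I assume $\dim \c - \dim F \geq 2$ and let $F_1$ be the second element of the chain, so that $\dim \c - \dim F_1 < \dim \c - \dim F$. By induction there is a torus $S_1 \subset K$ with $F_1 \subset \lieh_1$, $\ext F_1$ an adjoint orbit of $H_1 := Z_K(S_1)$, and $F_1$ invariant under $H_1$. Hence $F_1$ is itself the convex hull of an adjoint orbit of the compact connected group $H_1$, and $F$ is a facet of $F_1$ by maximality of the chain. Applying the base case \emph{inside} $F_1$, now viewed as an orbitope for $H_1$, yields a torus $S' \subset H_1$ fixing $F$ pointwise, with $\ext F$ an $H_1'$-orbit and $F \subset \lieh_1'$, where $H_1' := Z_{H_1}(S')$. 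Setting $S := S_1 \cdot S'$, which is a torus because $S'$ centralises $S_1$, one computes
\[
Z_K(S) = Z_K(S_1) \cap Z_K(S') = H_1 \cap Z_K(S') = Z_{H_1}(S') = H_1',
\]
and properties (a)--(d) for $F \subset \c$ follow by combining the outputs at each stage, (d) once more being a consequence of (b).

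The main obstacle is the step ``apply the base case inside $F_1$'', which requires Lemma~\ref{facciona-esposta} for orbitopes of the compact connected group $H_1$ --- a group that is reductive but in general not semisimple. One has to verify that the semisimplicity hypothesis on $K$ is never actually invoked in the proofs of Lemmas~\ref{sono-simplettico}, \ref{hesso}, \ref{hesso-2} and \ref{facciona-esposta}: those arguments rely only on Atiyah's connectedness theorem, Frankel's theorem on the index of the components of a moment map, and the fact that a maximal torus of $H_1$ is still a maximal torus of $K$. Alternatively, one may translate $F_1$ by its unique $H_1$-fixed relative-interior point, furnished by Lemma~\ref{lemma-z0}, and pass to the semisimple factor of $H_1$ acting nontrivially on $\ext F_1$, to which the lemmas apply verbatim.
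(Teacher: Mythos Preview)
Your proof is correct and follows essentially the same inductive strategy as the paper's: both argue by induction on the length of a maximal face chain, handle the base case by observing that a maximal proper face is exposed and invoking Lemma~\ref{facciona-esposta}, and in the inductive step combine the torus $S_1$ coming from $F_1$ with a new torus produced inside $F_1$, checking that $Z_K(S_1\cdot S') = Z_{H_1}(S')$. The paper commits explicitly to the second alternative you sketch in your final paragraph---it translates by the fixed point $x_0$ via Lemma~\ref{lemma-z0} and passes to the semisimple factor $L$ of $H_1$ so that the orbit is full and Lemma~\ref{facciona-esposta} applies verbatim---and then carries out the same centralizer computation you give.
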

\begin{proof}
  (d) is a direct consequence of (c).  To prove (a)-(c) fix a chain of
  faces $F=F_0 \subsetneq F_1 \subsetneq \cds \subsetneq F_k = \c$,
  such that for any $i$ there is no face strictly contained between
  $F_{i-1}$ and $ F_i$.  This is possible by Lemma \ref{face-chain}.
  We will prove (a)-(c) by induction on $k$. If $k=1$, then $F$ is a
  maximal proper face. Since any face is contained in an exposed face,
  $F$ is necessarily exposed.  Thus (a)-(c) follow from the previous
  lemma.  We proceed with the induction.  Let $k>1$ and assume that
  the theorem is proved for faces contained in a maximal chain of
  length $k-1$. Fix $F$ with a maximal chain as above of lenght
  $k$. By the inductive hypothesis the theorem holds for $F_1$, so
  that there is a nontrivial subtorus $S_1 \subset K$ which pointwise
  fixes $F_1$. Moreover if we set $H_1 = Z_K (S_1)$ and $\lieh_1 =
  \Lie H_1 = \liez_\liek (\lies_1)$, then $F_1 \subset \lieh_1$ and
  $\ext F_1$ is an orbit of $H_1$. In particular if we choose a point
  $x\in \ext F \subset \ext F_1$, then $\ext F_1= H_1 \cdot x$.  Split
  $H_1 = Z \cd L \cd L'$ with $Z=Z(H_1)^0$ as in \eqref{eq:H-dec} and
  write $x = x_0 +x _1$ as in Lemma \ref{lemma-z0}, with $x_0 \in
  \liez= \liez(\lieh_1)$ and $x_1 \in \liel$, so that $\ext F_1 = x_ 0
  + L\cd x_1$. The orbit $\c':=L\cd x_1$ is full in $\liel$ and $F':=F
  _ 0 - x_ 0 = F - x_0$ is a maximal face of $\c'$. Therefore $F'$ is
  an exposed face, i.e. there is some $u\in \liel$ such that
  $F'=F_u(\c')$.  Set $S_2 := \overline{\exp ( \R u)}$.  By the
  previous lemma $\ext F'$ is an orbit of $Z_L(S_2)$.  Moreover $x_1
  \in \ext F'$, because $x \in \ext F$. Therefore $\ext F ' =Z_L(S_2)
  \cd x_1$. Since $u\in \liel$ and $\liel \subset \lieh_1$, $u$
  commutes with $\lies_1$. So $S_1$ and $S_2$ commute and generate a
  torus $S$. Set $H:= Z_K(S)$. If $g \in H$, then $g $ commutes with
  $S_1$, hence $g\in H_1$. It follows that $H \subset
  Z_{H_1}(S_2)$. Conversely, if $g\in Z_{H_1}(S_2)$, then $g$ also
  commutes with $S$, so $g \in H$. Thus we get $H=Z_{H_1}(S_2)$.
  Since $S_2 \subset L$, $Z\cd L ' \subset Z_{H_1}(S_2) = H$ and $H =
  Z\cd Z_L(S_2) \cd L'$. Since $Z\cd L'$ fixes $x_1$ this implies that
  $H\cd x_1 = Z_{H_1}(S_2) \cd x_1 = Z_L(S_2) \cd x_1 = \ext F'$.
  Since $x_0 \in \liez =\liez(\lieh_1) $, we conclude that $\ext F =
  \ext F' + x_0 = H \cd x_1 + x_0 = H\cd x$. This proves (b). Next
  observe that the previous lemma also ensures that $F' \subset
  \liez_{\liel}(u)$ and that $\liez_\liel(u) \subset \lieh$. Since
  $x_0\in \lieh$ too, we conclude that $F = F' + x_0 \subset
  \lieh$. This proves (c). By definition $\lieh = \liez_\liek(\lies)$,
  so $S$ fixes any point of $\lieh$ and in particular it fixes
  pointwise $F$. Thus (a) is proved.
\end{proof}

We remark that the inductive argument used in the previous proof does
not imply that all faces are exposed, since being an exposed face is
not a transitive relation.

\begin{cor}\label{F-simplettica}
  If $F\subset \c$ is a face, then $\ext F$ is a symplectic
  submanifold of $\OO$.
\end{cor}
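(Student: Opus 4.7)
The plan is to combine Theorem \ref{facciona-orbita} with Lemma \ref{sono-simplettico}(ii). First I would dispose of the trivial case $F = \c$, for which $\ext F = \OO$ is tautologically a symplectic submanifold of itself. So I assume $F$ is a proper face.

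By Theorem \ref{facciona-orbita}, there exists a nontrivial torus $S \subset K$ such that, setting $H := Z_K(S)$, we have $\ext F \subset \lieh$ and $\ext F$ is a single $H$-orbit in $\OO$. In particular $\ext F$ is contained in $\OO \cap \lieh$.

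Now since $H$ is the centralizer of a torus, Lemma \ref{sono-simplettico}(ii) tells us that $\OO \cap \lieh$ is a symplectic submanifold of $\OO$. Moreover, by Lemma \ref{sono-simplettico}(i), $\OO \cap \lieh$ decomposes as a disjoint union of finitely many $H$-orbits. Since $H = Z_K(S)$ is connected (the centralizer of a torus in a connected compact Lie group is connected), each of these $H$-orbits is connected, and so each is a union of connected components of $\OO \cap \lieh$; in particular each is open in $\OO \cap \lieh$.

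Therefore $\ext F$, being one of these $H$-orbits, is an open subset of the symplectic submanifold $\OO \cap \lieh$, and hence is itself a symplectic submanifold of $\OO$. The only potential subtlety is the connectedness of $Z_K(S)$, which is standard for compact connected Lie groups and requires no further argument. No calculations are involved; the corollary is essentially just an assembly of the two preceding results.
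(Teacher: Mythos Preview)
Your proof is correct and follows the same approach as the paper's: invoke Theorem \ref{facciona-orbita} to obtain $S$ and $H=Z_K(S)$, then apply Lemma \ref{sono-simplettico} to identify $\ext F$ as a connected component of the symplectic submanifold $\OO\cap\lieh$. The paper's proof is more terse, leaving implicit the step you spell out (that $\ext F$ is open in $\OO\cap\lieh$); your justification of that step is slightly misworded---connectedness alone does not make a subset a union of connected components---but the intended argument (finitely many compact orbits, hence each is clopen) is clear and correct.
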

\begin{proof}
  Let $S$ and $H$ be as in Theorem \ref{facciona-orbita}. Then $\ext F
  \subset \OO\cap \lieh$ is an $H$-orbit. The result follows directly
  from Lemma \ref{sono-simplettico}.
\end{proof}

\begin{cor} \label{torus-preserves} If $F\subset \c$ is a face, there
  is a maximal torus $T \subset K$ that preserves $F$.
\end{cor}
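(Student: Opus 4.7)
The plan is to deduce this directly from Theorem \ref{facciona-orbita}, which already provides a torus $S\subset K$ together with its centralizer $H=Z_K(S)$ such that $F\subset \lieh$ and $g\cdot F=F$ for every $g\in H$. The strategy is to enlarge $S$ to a maximal torus of $K$ and observe that this enlarged torus lies inside $H$, so that property (d) of Theorem \ref{facciona-orbita} gives the required invariance.

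First I would choose a maximal torus $T\subset K$ containing $S$; this is possible because $S$ is itself a torus, hence contained in some maximal torus of the compact connected group $K$. Since $T$ is abelian and $S\subset T$, every element of $T$ commutes with $S$, which shows $T\subset Z_K(S)=H$. At this point no further computation is needed: by Theorem \ref{facciona-orbita}(d) every element of $H$ preserves $F$, so in particular the maximal torus $T$ preserves $F$.

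There is no serious obstacle here; the substance is already packaged inside Theorem \ref{facciona-orbita}. The only thing one should be mindful of is that a maximal torus containing $S$ exists and automatically sits in the centralizer of $S$, a standard fact about compact connected Lie groups. Alternatively, one could write $T$ as a maximal torus of $H$ and note that, because $H$ has maximal rank in $K$ (being the centralizer of a torus), such a $T$ is simultaneously a maximal torus of $K$.
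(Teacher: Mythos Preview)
Your argument is correct and essentially the same as the paper's: the paper's proof is the single sentence ``A maximal torus of $H$ is also a maximal torus of $K$,'' which is precisely the alternative formulation you give at the end. Your first route (enlarge $S$ to a maximal torus $T$ and observe $T\subset Z_K(S)=H$) is an equivalent rephrasing of the same maximal-rank fact.
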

\begin{proof}
  A maximal torus of $H$ is also a maximal torus of $K$.
\end{proof}

\begin{remark}
  \label{carat-1}
  The above results shows that every face of $\c$ is a coadjoint
  orbitope for some subgroup $H \subset K$.  One might wonder if a
  similar property holds for all orbitopes: if a group $K$ acts
  linearly on $V$ and $\OO$ is an orbit, one might ask if every face
  of $\c$ is an orbit of some subgroup of $K$. The answer is negative
  in general.  Counterexamples are provided e.g.  by convex envelopes
  of orbits of $S^1$ acting linearly on $\R^n$. These are called
  \emph{Carath\'eodory orbitopes}, since their study goes back to
  \cite{caratheodory-Koeffizienten}. In \cite{smilansky-1985} there is
  a thorough study of the 4-dimensional case (see also
  \cite{barvinok-novik-centrally-symmetric}).  It turns out (see
  Theorem 1 in \cite{smilansky-1985}) that there are many
  1-dimensional faces whose extreme sets are not orbits of any
  subgroup of $K$.  Therefore the fact that we just established,
  namely that the faces of a coadjoint orbitope are all orbitopes of
  the same kind, seems to be a rather remarkable property.
\end{remark}

The subgroups $S$ and $H$ in Theorem \ref{facciona-orbita} are not
unique. Later in Theorem \ref{tutte-esposte} (d) we will show that
there is a canonical choice. Now we wish to show that one can always
assume that $S=Z(H)^0$.

%
%

\begin{cor} \label{H-buoni} In Theorem \ref{facciona-orbita} we can
  assume that $Z(H)$ acts trivially on $F$ and that $S=Z(H)^0$.
\end{cor}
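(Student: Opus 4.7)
The plan is to replace the torus $S$ from Theorem \ref{facciona-orbita} by the larger torus $S' := Z(H)^0$, where $H = Z_K(S)$. The key observation is that property (c) of Theorem \ref{facciona-orbita} says $F \subset \lieh$, and since $Z(H)$ commutes with every element of $H$ it commutes with every one-parameter subgroup $\exp(tx)$ for $x \in \lieh$; under the adjoint action this means $Z(H)$ fixes $\lieh$ pointwise, so in particular it fixes $F$ pointwise and acts trivially on $F$. This gives the $Z(H)$-triviality for free, so everything reduces to showing that swapping $S$ for $S'$ preserves properties (a)--(d).

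First I would check that $S'$ is a nontrivial torus. Since $S$ centralizes $H = Z_K(S)$ by construction, we have $S \subset Z(H)$, and the connectedness of $S$ forces $S \subset Z(H)^0 = S'$; as $S$ is nontrivial by Theorem \ref{facciona-orbita}, so is $S'$.

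Next I would verify the identity $H = Z_K(S')$. The inclusion $Z_K(S') \subset Z_K(S) = H$ is immediate from $S \subset S'$. For the reverse inclusion, $H$ centralizes $Z(H)$ by definition, hence centralizes its identity component $S'$, giving $H \subset Z_K(S')$. Thus replacing $S$ by $S'$ keeps the centralizer equal to $H$, so conditions (b), (c), (d) of Theorem \ref{facciona-orbita} are unaffected, while the pointwise fixing condition (a) is strengthened: by the observation of the first paragraph, $Z(H)$, and in particular $S' = Z(H)^0$, fixes $F$ pointwise. This yields the corollary with the additional properties $S = Z(H)^0$ and $Z(H)$ acting trivially on $F$.

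There is no serious obstacle here: the corollary is a formal consequence of Theorem \ref{facciona-orbita}, the only subtlety being the verification that enlarging $S$ to $Z(H)^0$ does not enlarge the centralizer beyond $H$.
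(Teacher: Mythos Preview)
Your proof is correct, and for the second half (showing $Z_K(S') = H$ and that $S'$ is nontrivial) it matches the paper's argument essentially verbatim.

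The difference lies in how you establish that $Z(H)$ fixes $F$ pointwise. You argue directly: since $F \subset \lieh$ by property (c), and for any $g \in Z(H)$ and $x \in \lieh$ one has $g\exp(tx)g^{-1} = \exp(tx)$, differentiating gives $\Ad(g)x = x$; hence $Z(H)$ fixes $\lieh$, and in particular $F$, pointwise under the adjoint action. The paper instead invokes symplectic machinery: it uses that $\ext F$ is a symplectic $H$-orbit (Corollary \ref{F-simplettica}) and the general fact that for a symplectic orbit the stabilizer $H_x$ equals the stabilizer $H_{p(x)}$ of the momentum image, then notes $p(x) \in \lieh$ forces $Z(H) \subset H_{p(x)} = H_x$. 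Your route is more elementary and avoids the detour through Corollary \ref{F-simplettica} and the cited result from Guillemin--Sternberg; indeed, since property (c) already gives $x \in \lieh$ (so $p(x) = x$), the momentum-map step in the paper's proof is not really needed, and your observation is the cleaner way to reach the conclusion.
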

\begin{proof}
  Let $p: \liek \ra \lieh$ denote the orthogonal projection.  $H$ acts
  on $\OO$ in a Hamiltonian way with momentum map $p\restr{\OO} $.  If
  $x \in \ext F$, then $H\cd x = \ext F$ is a symplectic orbit by
  Corollary \ref{F-simplettica}. Therefore $H_x = H_{p(x) }$, see
  e.g. \cite[Thm. 26.8, p. 196]{guillemin-sternberg-techniques}. Since
  $p(x) \in \lieh$, the stabilizer $H_{p(x)}$ contains the center of
  $H$. So $Z(H) \subset H_x$. This proves the first statement.  Next
  set $S' = Z(H)^0$.  Then $S'$ is a positive dimensional torus.  To
  prove the second fact it is enough to show that changing $S$ to $S'$
  does not change the centralizer, i.e. that $H=Z_K(S')$.  Since $S'
  \subset Z(H)$, $H$ and $S'$ commute, so $H \subset Z_K(S')$.  On the
  other hand $H$ is the centralizer of $S$, so $S \subset S'$, and
  $Z_K(S') \subset Z_K(S)=H$. Therefore indeed $H=Z_K(S')$.
\end{proof}

%
%
The following is an immediate consequence of Lemma \ref{lemma-z0}.
\begin{lemma}\label{lemma-y0}
  Let $F$ be a face of $\c$, $H\subset K$ a connected subgroup and
  assume that $\ext F$ is an $H$-orbit and that $F \subset \lieh$.
  Decompose $H$ as in \eqref{eq:H-dec}, i.e. $L$ is the product of the
  simple factors of $(H,H)$ that act nontrivially on $F$, while $L'$
  is the product of those factors that act trivially. If $x\in \ext F
  $, then $x=x_0 + x_1$ with $x_0 \in \liez$ and $x_1 \in
  \liel$. Moreover
  \begin{gather*}
    \ext F = H \cdot x = x _0 + L \cd x_1
  \end{gather*}
  and $L \cdot x_1 \subset \liel$ is full.
\end{lemma}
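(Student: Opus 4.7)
The plan is to invoke Lemma \ref{lemma-z0} directly, with the original coadjoint orbit $\OO \subset \liek$ of $K$ replaced by the adjoint orbit $\ext F \subset \lieh$ of the compact connected group $H$.

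First I would check that the hypotheses of Lemma \ref{lemma-z0} apply to the pair $(H, \ext F)$. The group $H$ is compact and connected by assumption, $\ext F$ is an $H$-orbit by assumption, and $\ext F \subset F \subset \lieh$. Next I would identify the decomposition \eqref{eq:H-dec} in this setting: the splitting \eqref{scomposizione-h} applied to $H$ reads $\lieh = \liez \oplus \liek_1 \oplus \cds \oplus \liek_r$, where $\liez$ is the center of $\lieh$ and the $\liek_i$ are the simple ideals of the derived algebra. Lemma \ref{lemma-z0} (tacitly) reorders the simple factors $K_i$ of $(H,H)$ so that $L = K_1 \cds K_q$ collects those acting nontrivially on the chosen orbit and $L' = K_{q+1} \cds K_r$ those acting trivially. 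Since $F = \conv(\ext F)$ and the adjoint action is linear, a factor $K_i$ acts trivially on $\ext F$ if and only if it acts trivially on $F$. Hence the partition produced by Lemma \ref{lemma-z0} applied to $(H, \ext F)$ coincides with the one specified in the statement of the present lemma.

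With the setup in place, the conclusion of Lemma \ref{lemma-z0} applied to $(H, \ext F)$ yields for each $x \in \ext F$ a unique decomposition $x = x_0 + x_1$ with $x_0 \in \liez$ and $x_1 \in \liel$, together with $\ext F = H \cdot x = x_0 + L \cd x_1$. The same lemma identifies the affine span of $\ext F$ as $x_0 + \liel$, which is precisely the statement that $L \cd x_1$ is full in $\liel$.

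There is essentially no obstacle: the entire argument is an application of Lemma \ref{lemma-z0} to a different group acting on a different orbit. The only point requiring a moment's thought is matching the partition of simple factors of $(H,H)$ according to whether they act trivially on $F$ versus on $\ext F$, and this is immediate from the identity $F = \conv(\ext F)$ together with the linearity of the adjoint action.
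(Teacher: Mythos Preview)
Your proposal is correct and matches the paper's approach exactly: the paper simply states that Lemma \ref{lemma-y0} is an immediate consequence of Lemma \ref{lemma-z0}, and your argument spells out precisely this application, including the one small check that a simple factor of $(H,H)$ acts trivially on $F$ iff it acts trivially on $\ext F$.
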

%
%
%


Now we fix a maximal torus $T$ and we use the notation of
p. \pageref{momentum-notation-page}. We wish to show that the
$T$-stable faces of $\c$ and the faces of the momentum polytope are in
bijective correspondence. This will be used to prove that all faces of
$\c$ are exposed. The relation between the $T$-invariant faces of $\c$
and the faces of $P$ will be studied further in the next section.

The following lemma is a consequence of Kostant convexity theorem.
See \cite[Lemma 7]{gichev-polar} for a proof in the context of polar
representations.

\begin{lemma}
  \label{gichev}
  Let $K$ be a compact connected Lie group, $T \subset K$ be a maximal
  torus and let $\pi: \liek \ra \liet$ be the orthogonal projection.
  Then (i) If $E \subset \liek$ is a $K$-invariant convex subset, then
  $E\cap \liet = \pi (E)$. (ii) If $A \subset \liet$ is a
  $\Weyl$-invariant convex subset, then $K \cdot A $ is convex and
  $\pi (K\cdot A )= A$.
\end{lemma}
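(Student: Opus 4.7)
The plan is to derive both parts from Kostant's convexity theorem (Theorem \ref{Kostant}), applied orbit-by-orbit. The single fact I will use repeatedly is that for any $x \in \liek$, the projection $\pi(K \cdot x)$ equals $\conv(W \cdot x_0)$ whenever $x_0 \in K \cdot x \cap \liet$. In particular, for any $x \in \liek$ the point $\pi(x)$ is a convex combination of points lying in the orbit $K \cdot x$ itself.

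For part (i), the inclusion $E \cap \liet \subset \pi(E)$ is immediate because $\pi$ restricts to the identity on $\liet$. For the reverse inclusion, I would take $y \in E$ and write $\pi(y)$ as a convex combination $\sum \lambda_j (w_j \cdot x_0)$, where $x_0 \in K \cdot y \cap \liet$ and the $w_j \in W$ are represented by elements of $K$. Each $w_j \cdot x_0$ lies in $K\cdot y$, hence in $E$ by $K$-invariance, and then convexity of $E$ yields $\pi(y) \in E \cap \liet$.

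For part (ii), I would first establish $\pi(K\cdot A) = A$. The inclusion $A \subset \pi(K \cdot A)$ holds trivially since $\pi|_\liet = \Id$. For the other inclusion, given $a \in A$, Kostant gives $\pi(K \cdot a) = \conv(W \cdot a)$, and since $A$ is $W$-invariant and convex, $\conv(W \cdot a) \subset A$. Now set $B := \conv(K \cdot A)$. Then $B$ is a $K$-invariant convex set, so part (i) applies and gives
\begin{gather*}
B \cap \liet = \pi(B) = \conv\bigl(\pi(K\cdot A)\bigr) = \conv(A) = A,
\end{gather*}
using linearity of $\pi$ and convexity of $A$. To finish, I would take any $x \in B$ and choose $k \in K$ with $k\meno \cdot x \in \liet$; then $k\meno \cdot x \in B \cap \liet = A$, so $x \in K \cdot A$. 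This shows $B \subset K \cdot A$, hence $K \cdot A = B$ is convex, and simultaneously $\pi(K \cdot A) = \pi(B) = A$.

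The conceptual heart of the argument is the single application of Kostant's theorem to a single orbit, which immediately gives $\pi(y) \in \conv(K\cdot y)$. Once this is observed, both statements follow by routine manipulation. No serious obstacle is expected; the only mild subtlety is keeping straight that $\pi$ is linear and that $\pi|_\liet$ is the identity, so that the linear image of a convex hull is the convex hull of the image and $\conv(A) = A$ when $A$ is already convex.
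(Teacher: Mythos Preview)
Your argument is correct. The paper does not actually supply a proof of this lemma: it simply remarks that the statement is a consequence of Kostant's convexity theorem and refers to \cite[Lemma~7]{gichev-polar} for a proof in the more general setting of polar representations. Your derivation is precisely the kind of orbit-by-orbit application of Kostant's theorem the paper has in mind, so there is nothing substantive to compare.
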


\begin{lemma} \label{proiezione-intersezione} Let $T \subset K$ be a
  maximal torus and let $F \subset \c$ be a nonempty $T$-invariant
  face. Set $ \sigma : = \pi(\ext F)$.  Then
  $\sigma = \pi(F) = F\cap \liet$.  Moreover $\sigma$ is a nonempty
  face of the momentum polytope $P$.
\end{lemma}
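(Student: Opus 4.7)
The plan is to prove the chain $\sigma \subset \pi(F) = F \cap \liet$, identify $F \cap \liet$ as a nonempty face of $P$, and then close the loop with the reverse inclusion $F \cap \liet \subset \sigma$ via Krein--Milman combined with convexity of $\sigma$.

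First I observe that $\pi \colon \liek \to \liet$ is nothing but $T$-averaging: because $\liek = \liet \oplus \bigoplus_{\alfa \in \roots_+} Z_\alfa$, $T$ acts trivially on $\liet$ while averaging over $T$ annihilates each $Z_\alfa$, so $\pi(y) = \int_T t\cdot y\, dt$ for every $y \in \liek$. Since $F$ is $T$-invariant, compact and convex, this Haar integral of elements of $F$ lies in $F$, giving $\pi(F) \subset F \cap \liet$; the reverse inclusion is trivial. Applied to $\ext F \subset F$ this yields $\sigma = \pi(\ext F) \subset F \cap \liet$, and nonemptiness of $F \cap \liet$ follows by averaging any point of $F$.

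Next, to identify $F \cap \liet$ as a face of $P$: since $\c$ is $K$-invariant, Lemma \ref{gichev}(i) together with Kostant's theorem gives $\c \cap \liet = \pi(\c) = P$, and Lemma \ref{micro-convesso} applied with $M = \liet$ then shows $F \cap \liet$ is a face of $P$.

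For the remaining inclusion $F \cap \liet \subset \sigma$, Lemma \ref{ext-facce} combined with Kostant's identification $\ext P = \OO \cap \liet$ and Lemma \ref{extO=O}'s $\ext F = F \cap \OO$ shows that the extreme points of the face $F \cap \liet \subset P$ are $\ext F \cap \liet \subset \sigma$, so Krein--Milman yields $F \cap \liet = \conv(\ext F \cap \liet) \subset \conv \sigma$. The main obstacle is to verify that $\sigma$ is already convex; once this is done, $\conv\sigma = \sigma$ and the proof closes. For convexity I invoke the Atiyah--Guillemin--Sternberg theorem applied to the $T$-action on $\ext F$: by Theorem \ref{facciona-orbita}, $\ext F$ is an adjoint orbit of a centralizer $H = Z_K(S)$ of a torus, hence a connected compact manifold (such centralizers in a compact connected group are connected), and Corollary \ref{F-simplettica} makes it a symplectic submanifold of $\OO$. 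Since $T$ preserves both $F$ and $\OO$, it preserves $\ext F$ and acts on it Hamiltonianly with moment map $\pi \restr{\ext F}$, so AGS forces $\sigma = \pi(\ext F)$ to be a convex polytope in $\liet$, completing the argument.
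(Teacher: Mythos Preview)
Your proof is correct and takes essentially the same approach as the paper: the crux in both is applying the Atiyah--Guillemin--Sternberg theorem to the symplectic $T$-manifold $\ext F$ (via Corollary~\ref{F-simplettica} and Theorem~\ref{facciona-orbita}) to obtain convexity of $\sigma$, and invoking Lemma~\ref{micro-convesso} for the face claim. Your averaging argument for $\pi(F)=F\cap\liet$ and your detour through Krein--Milman are minor organizational variations; the paper instead closes the loop with the trivial inclusion $F\cap\liet=\pi(F\cap\liet)\subset\pi(F)$ together with $\pi(F)=\conv\pi(\ext F)=\conv\sigma=\sigma$ (linearity of $\pi$ plus AGS).
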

\begin{proof}
  We prove this lemma in the same way as Kostant theorem is deduced
  from the Atiyah-Guillemin-Sternberg theorem.  By Corollary
  \ref{F-simplettica} $\ext F$ is a symplectic submanifold of
  $\OO$. $T$ acts on $\ext F$ with momentum map given by the
  restriction of $\pi $ to $\ext F$.  By definition $\sigma=\pi(\ext
  F)$ is the momentum polytope for this action.  By the
  Atiyah-Guillemin-Sternberg theorem
  \begin{gather*}
    \sigma = \conv \pi \left ( \left (\ext F \right )^T \right ) =
    \conv \pi ( \ext F \cap \liet).
  \end{gather*}
  This means first of all that $\sigma $ is convex. Since $\pi$ is
  linear it follows that $\pi(F) = \conv \pi (\ext F) = \sigma$.  On
  the other hand, since $\pi (\ext F\cap \liet)= \ext F\cap \liet$, we
  get
  \begin{gather}
    \label{estremi-rompipalle}
    \ext \sigma \subset \ext F \cap \liet \qquad \sigma \subset F\cap
    \liet .
  \end{gather}
  Conversely $F\cap \liet = \pi(F\cap \liet) \subset \pi (F)$.  Since
  $\pi(F) = \sigma$ we get indeed $F\cap \liet = \sigma$.  Thus the
  first part is proven.  In particular we can apply this with $F=\c$,
  and we get that $P= \c \cap \liet$.  That $F\cap \liet$ is a face of
  $P$ now follows directly from Lemma \ref{micro-convesso} without
  assuming that $F$ be $T$-invariant.  To check that $\sigma \neq
  \vacuo$, recall that if a torus acts on a compact \Keler
  \changed{manifold} in a Hamiltonian way, then it has some fixed
  points. So $(\ext F) ^T = \ext F \cap \liet \neq \vacuo$ and $\sigma
  \neq \vacuo$.
\end{proof}

Recall the following basic property of Hamiltonian actions (see
e.g. \cite[Thm. 3.6]{guillemin-sternberg-convexity-1}).
\begin{lemma}
  \label{minimo-simplettico}
  Let $M$ be a symplectic manifold and let $T$ be a torus that acts on
  $M$ in a Hamiltonian way with momentum map $\mom : M \ra \liet$. If
  $S \subset T $ is a subtorus that acts trivially on $M$, then
  $\mom(M)$ is contained in a translate of $\lies^\perp$.
\end{lemma}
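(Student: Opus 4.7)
The plan is to unfold the defining property of a Hamiltonian momentum map against vectors in $\lies$ and exploit the hypothesis that the $S$-action is trivial. First I would recall that the momentum map $\mom$ satisfies, for every $\xi \in \liet$,
\begin{gather*}
  d \sx \mom, \xi \xs = \iota_{\xi_M} \om,
\end{gather*}
where $\xi_M$ denotes the fundamental vector field on $M$ induced by $\xi$. If $\xi\in\lies$, then the hypothesis that $S$ acts trivially on $M$ gives $\xi_M\equiv 0$, and hence $d\sx\mom,\xi\xs = 0$. Thus $\sx \mom, \xi\xs$ is locally constant on $M$, and constant on each connected component.

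Next I would fix (on one component) a base point $p_0 \in M$ and set $c:=\mom(p_0)\in\liet$. The previous step yields, for every $p \in M$ and every $\xi \in \lies$,
\begin{gather*}
  \sx \mom(p) - c, \xi \xs = 0.
\end{gather*}
Since the scalar product $\sx\,,\,\xs$ identifies $\liet$ with $\liet^*$ and $\lies$ sits inside $\liet$, this is exactly the statement that $\mom(p)-c \in \lies^\perp$, i.e.\ $\mom(M)\subset c+\lies^\perp$, proving the claim.

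There is essentially no obstacle here: the only subtlety is the implicit use of connectedness of $M$, which in the intended applications (where $M$ is an orbit or a connected symplectic submanifold such as $\ext F$) is automatic; if one wanted full generality one would apply the argument componentwise, but then the conclusion would only hold on each component separately. Since the lemma is invoked in the paper only for connected $M$, I would simply state connectedness implicitly and keep the proof as above.
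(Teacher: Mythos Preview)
Your argument is correct and is exactly the standard proof of this fact. The paper itself does not supply a proof of this lemma but simply cites \cite[Thm.~3.6]{guillemin-sternberg-convexity-1}; your write-up is the direct unwinding of the momentum map condition that underlies that reference, including the appropriate remark on connectedness.
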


If $M\subset \R^n$ is an affine subspace, the linear subspace parallel
to $M$ is called the \emph{direction} of $M$
\cite[p. 42]{berger-geometry-I}. Denote by $\sigma^\perp$ the
orthogonal space in $\liet$ to the direction of $\sigma $.

\begin{lemma}
  \label{medio-simplettico}
  Let $F$ be a proper face of $\c$, let $H$ be a subgroup as in
  Theorem \ref{facciona-orbita} and let $T $ be a maximal torus of
  $H$. Then $\sigma := F\cap \liet$ is a proper face of $P$ and $ \ext
  F $ is a $ Z_K(\sigma^\perp) $-orbit.
\end{lemma}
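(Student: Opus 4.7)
The plan is to identify the tangent space of $\ext F$ at a $T$-fixed vertex of $\sigma$ with that of the candidate orbit $Z_K(\sigma^\perp)\cd x$. To set up, Theorem \ref{facciona-orbita}(d) says that $H$ preserves $F$ as a set, so in particular $T\subset H$ does, and Lemma \ref{proiezione-intersezione} then identifies $\sigma=F\cap\liet=\pi(\ext F)$ as a nonempty face of $P$. Applying Theorem \ref{Kostant} to the $H$-orbit $\ext F\subset\lieh$ shows that $\ext F\cap\liet$ is a single $W(H,T)$-orbit, and these points are precisely the vertices of $\sigma$; I fix one such vertex $x$.

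Using $\ext F=H\cd x$ (Theorem \ref{facciona-orbita}(b)) and the decomposition $\lieh=\liez_\liek(\lies)=\liet\oplus\bigoplus_{\alpha|_\lies=0}Z_\alpha$ from Lemma \ref{knappone}, the tangent space $T_x\ext F=\ad(\lieh)(x)$ equals the sum of those $Z_\alpha$ with $\alpha|_\lies=0$ and $\alpha(x)\neq 0$, while $T_x(Z_K(\sigma^\perp)\cd x)$ is the sum of those $Z_\alpha$ with $\alpha|_{\sigma^\perp}=0$ and $\alpha(x)\neq 0$. One inclusion between the two index sets is immediate: since $S$ fixes $\ext F$ pointwise by Theorem \ref{facciona-orbita}(a), Lemma \ref{minimo-simplettico} places $\sigma$ in a translate of $\lies^\perp$, giving $\lies\subset\sigma^\perp$ and therefore $\alpha|_{\sigma^\perp}=0\Rightarrow\alpha|_\lies=0$. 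In particular $Z_K(\sigma^\perp)\subset H$.

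The core step is the reverse inclusion, and this is the main obstacle. Here I would invoke the local normal form for the Hamiltonian $T$-action on the symplectic submanifold $\ext F$ at the fixed point $x$: a neighborhood of $x$ in $\ext F$ is equivariantly symplectomorphic to a neighborhood of $0$ in $T_x\ext F$, and the image of the momentum map $\pi$ near $x$ is the cone at $x$ generated by the weights of $T$ on $T_x\ext F$, regarded as vectors in $\liet$ via $\sx\ ,\ \xs$. Because $x$ is a vertex of the polytope $\sigma$, this cone has affine span equal to the direction of $\sigma$, so every such weight, i.e.\ every $H_\alpha$ with $\alpha|_\lies=0$ and $\alpha(x)\neq 0$, is orthogonal to $\sigma^\perp$, whence $\alpha|_{\sigma^\perp}=0$. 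The two index sets thus coincide and $T_x(Z_K(\sigma^\perp)\cd x)=T_x\ext F$. Since $Z_K(\sigma^\perp)$ is compact and connected, its orbit through $x$ is a closed submanifold of $\ext F$ of the same dimension, hence equals $\ext F$.

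Finally, to show $\sigma$ is proper, I would run the same vertex-tangent analysis for $\c$ itself, with $K$ and $P$ in place of $H$ and $\sigma$ and any $x\in\OO\cap\liet$, obtaining $\OO=Z_K(P^\perp)\cd x$. If $\sigma=P$, then $\sigma^\perp=P^\perp$ and the preceding paragraph gives $\ext F=Z_K(\sigma^\perp)\cd x=\OO=\ext\c$, forcing $F=\c$ and contradicting the hypothesis that $F$ is proper.
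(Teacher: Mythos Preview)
Your argument is correct and takes a genuinely different route from the paper's.  The paper proceeds structurally: it invokes the decomposition $H = Z\cdot L\cdot L'$ of \eqref{eq:H-dec} (where $L$ is the product of the simple factors of $(H,H)$ acting nontrivially on $\ext F$), computes $\aff\sigma = x_0 + (\liel\cap\liet)$ by a two-inclusion argument, and deduces the explicit formula $\sigma^\perp = \liez \oplus (\liel'\cap\liet)$.  From this one sees at once that $L$ centralises $\sigma^\perp$, so $L\subset Z_K(\sigma^\perp)\subset H$, and since $L\cdot x = H\cdot x = \ext F$ the result follows; properness comes from $\liez\neq\{0\}$ forcing $\sigma^\perp\neq\{0\}$.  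You instead compare tangent spaces at a vertex $x\in\ext\sigma$ directly via the root-space decomposition, and your key step---that each relevant $iH_\alpha$ lies in the direction of $\sigma$---is correct.  In fact that step has an even lighter justification than the local normal form you invoke: for $\alpha$ a root of $H$ with $\alpha(x)\neq 0$, the reflection $s_\alpha\in W(H,T)$ sends $x$ to another point of $W(H,T)\cdot x\subset\sigma$, and $s_\alpha(x)-x$ is a nonzero multiple of $iH_\alpha$, so $iH_\alpha$ lies in the direction of $\sigma$.  Your approach is more geometric and self-contained; the paper's buys the explicit identification of $\sigma^\perp$ with $\liez\oplus(\liel'\cap\liet)$, which is reused in the Weyl-group analysis of \S\ref{relazione-politopo}.
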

\begin{proof}
  By assumption $\ext F$ is an $H$-orbit. Hence it is a connected
  component of $\OO\cap \lieh$. In particular by Lemma
  \ref{sono-simplettico} it is a symplectic submanifold of $\OO$.  By
  Corollary \ref{H-buoni} $S: = Z(H)^0$ is a nontrivial subtorus of
  $T$, which acts trivially on $\ext F$.  The momentum map for the
  $T$-action is the restriction of $\pi$.  So by Lemma
  \ref{minimo-simplettico} $\sigma = \pi(\ext F)$ is contained in a
  traslate of $\lies^\perp$, i.e.  $\lies \subset \sigma^\perp$. It
  follows that $Z_K(\sigma^\perp) \subset Z_K(\lies) = Z_K(S) = H$.
  Next consider the decomposition \eqref {eq:H-dec}. We know that
  $L\cd x_1 \subset \liel$ is a full orbit.  Denoting by $\aff ( \cd
  )$ the affine span
  \begin{gather*}
    \aff F = \aff (\ext F) = x_0 + \liel.
  \end{gather*}
  Since $x_0\in \liet$, $(x_0 + \liel) \cap \liet = x_0 + (\liel \cap
  \liet)$ and
  \begin{gather*}
    \aff \sigma = \aff (F\cap \liet) \subset ( \aff F) \cap \liet =
    x_0 + (\liel \cap \liet).
  \end{gather*}
  Since $\liel$ is an ideal of $\liek$, it is the direct orthogonal
  sum of $\liel \cap \liet$ and some $Z_\alfa$, see
  \eqref{Zalfa}. Hence $ \liel= (\liel \cap \liet ) \stackrel{\perp}{
    \oplus} (\liel \cap \liet^\perp)$.  It follows that $\pi(\liel) =
  \liel \cap \liet$ and also, since $x_0 \in \liet$, that $\pi(x_0
  +\liel) = x_0 + (\liel \cap\liet)$. So
  \begin{gather*}
    x_0 + (\liel \cap \liet ) = \pi(x_0 + \liel)= \pi(\aff F) \subset
    \aff (\pi(F)) = \aff \sigma.
  \end{gather*}
  From these two inclusions we get that $\aff \sigma = x_0 + (\liel
  \cap \liet)$. Therefore $\sigma^\perp $ is the orthogonal complement
  of $\liel \cap \liet$ in $\liet$. Since $\liet = \liez
  \stackrel{\perp}{ \oplus} (\liel \cap \liet) \stackrel{\perp}{
    \oplus} (\liel' \cap \liet)$, we get $\sigma^ \perp = \liez \oplus
  (\liel'\cap \liet) \subset \liez \oplus \liel'$.  So $[\liel,
  \sigma^\perp] \subset [\liel, \liez\oplus \liel']=0$ and $L \subset
  Z_K(\sigma^\perp)$. From the inclusions $L\subset Z_K(\sigma^\perp)
  \subset H$ and the fact that $L\cd x = H\cd x = \ext F$ for any
  $x\in \ext F$ we immediately get $Z_K(\sigma^\perp) \cd x = \ext F$.
  We already know (from Lemma \ref{proiezione-intersezione}) that
  $\sigma $ is a nonempty face of $P$.  By Theorem
  \ref{facciona-orbita} $\liez \neq \{0\}$, so $\sigma^\perp \neq
  \{0\}$, $\aff \sigma \neq \liet$ and $\sigma \subsetneq P$. This
  shows that $\sigma$ is a proper face.
\end{proof}

\begin{cor}
  \label{interseco-monotono}
  Let $F_1, F_2$ be a proper faces of $\c$, let $H_1, H_2$ be
  corresponding subgroups as in Theorem \ref{facciona-orbita} and let
  $T $ be a maximal torus of $K$ which is contained in both $H_1 $ and
  $H_2$.  If $F_1 \cap \liet = F_2 \cap \liet$, then $F_1 = F_2$.
\end{cor}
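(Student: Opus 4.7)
The plan is to use Lemma \ref{medio-simplettico} to reduce equality of $F_1$ and $F_2$ to equality of their extreme sets, and then to exhibit a common point of $\ext F_1$ and $\ext F_2$ that determines both as a single orbit of the same group.

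First I would set $\sigma := F_1 \cap \liet = F_2 \cap \liet$. By Lemma \ref{medio-simplettico} applied to each $F_i$ (with the common maximal torus $T$), $\sigma$ is a proper face of the momentum polytope $P$, and $\ext F_i$ coincides with $Z_K(\sigma^\perp) \cdot x$ for any $x \in \ext F_i$. The crucial point is that the group $Z_K(\sigma^\perp)$ depends only on $\sigma$, not on $F_i$ or on the auxiliary subgroup $H_i$.

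Next I would produce a point common to $\ext F_1$ and $\ext F_2$. By Lemma \ref{ext-facce} and Kostant's Theorem \ref{Kostant}, the extreme points of the face $\sigma$ of $P$ satisfy $\ext \sigma = \sigma \cap \ext P = \sigma \cap \OO$; since $\sigma$ is a nonempty compact convex set it has at least one extreme point, so I may pick $x \in \sigma \cap \OO$. Then $x \in F_i \cap \OO = \ext F_i$ for $i=1,2$ (again by Lemma \ref{extO=O}), so from the previous step
\begin{gather*}
  \ext F_1 = Z_K(\sigma^\perp) \cdot x = \ext F_2.
\end{gather*}

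Finally, since any compact convex set equals the convex hull of its extreme points (Krein--Milman), $F_i = \conv(\ext F_i)$, and the two faces coincide. No serious obstacle is anticipated here: the content of the corollary lies entirely in Lemma \ref{medio-simplettico}, which identifies the stabilizer group producing $\ext F$ intrinsically from $\sigma$; the present argument is just the observation that this intrinsic description forces two faces with the same torus slice to agree.
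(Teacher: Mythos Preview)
Your proof is correct and follows essentially the same line as the paper's: both set $\sigma = F_i\cap\liet$, invoke Lemma~\ref{medio-simplettico} to get $\ext F_i = Z_K(\sigma^\perp)\cdot x$, and pick a common $x\in\ext\sigma\subset\ext F_i$ (the paper cites \eqref{estremi-rompipalle} directly, while you rederive this via $\ext\sigma=\sigma\cap\ext P=\sigma\cap\OO\subset F_i\cap\OO=\ext F_i$). The only cosmetic difference is that you spell out the Krein--Milman step at the end.
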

\begin{proof}
  Set $\sigma :=F_i \cap \liet$.  Recall from
  \eqref{estremi-rompipalle} that $\ext \sigma \subset \ext F_i$ and
  pick $x\in \ext \sigma$. Then we can apply the previous lemma to
  both faces and we get $\ext F_1 = Z_K(\sigma^\perp) \cd x = \ext
  F_2$. The result follows.
\end{proof}

If $F \subset \c $ is a face set
\begin{gather}
  \label{def-hf}
  \begin{gathered}
    H_F :=\{ g\in K: gF=F\} \qquad Z_F := Z(H_F)^0
    \\
    \CF : = \{ u \in \liek: F=F_u (\c)\}.
  \end{gathered}
\end{gather}

The following is the main result of this section.
\begin{teo}
  \label{tutte-esposte}
  All proper faces of $\c$ are exposed. More precisely, if $F$ is a
  proper face $F \subset \c$, then
  \begin{enumerate}
  \item if $T\subset H_F$ is a maximal torus, $u\in \liet$ and $ F\cap
    \liet = F_u(P) $, then $F=F_u(\c)$;
  \item there is a vector $u\in \liez_F$ such that $F=F_u(\c)$;
  \item if $u \in \CF \cap \liez_F$, then $H_F = Z_K( u)$ (in
    particular $H_F$ is connected and $Z_F$ has positive dimension);
  \item the subgroup $H_F$ satisfies (a)-(d) of Theorem
    \ref{facciona-orbita}.
  \end{enumerate}
\end{teo}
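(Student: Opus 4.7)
My plan is to establish (a)--(d) in turn, with the key inputs being Corollary~\ref{interseco-monotono} (a face is determined by its trace on $\liet$) and the second part of Proposition~\ref{u-cono} (a group-invariant support vector always exists).

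\emph{Proof of (a).} Since $u\in\liet$, the maximum of $\sx\cd,u\xs$ on $\c$ is attained on $\OO$ and, by Kostant's Theorem~\ref{Kostant}, on $\OO\cap\liet\subset P$; hence $h_\c(u)=h_P(u)$ and
\begin{gather*}
  F_u(\c)\cap\liet = F_u(P) = F\cap\liet.
\end{gather*}
The face $F_u(\c)$ is proper and $T$-invariant (since $u\in\liet$), and by assumption $T\subset H_F$. Applying Theorem~\ref{facciona-orbita} to each of $F$ and $F_u(\c)$ and conjugating suitably inside $H_F$, one arranges $T$ to be a maximal torus of the corresponding subgroups; Corollary~\ref{interseco-monotono} then gives $F=F_u(\c)$.

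\emph{Proof of (b).} Apply Theorem~\ref{facciona-orbita} to $F$ and let $T$ be a maximal torus of the resulting $H$; then $T$ is also maximal in $K$. By Lemma~\ref{medio-simplettico} the set $\sigma:=F\cap\liet$ is a proper face of $P$, hence exposed, so Proposition~\ref{u-cono} supplies $u\in\liet$ with $\sigma=F_u(P)$. Part~(a) then yields $F=F_u(\c)$, proving that $F$ is exposed. Since $H_F$ preserves both $\c$ and $F$, the second part of Proposition~\ref{u-cono} produces an $H_F$-fixed $u\in\CF$. Any $u\in\CF$ automatically lies in $\lieh_F$: the centralizer $Z_K(u)$ preserves $F_u(\c)=F$, so $Z_K(u)\subset H_F$ and $u\in\Lie Z_K(u)\subset\lieh_F$. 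Combined with $H_F$-invariance, this places $u$ in $\liez(\lieh_F)=\liez_F$.

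\emph{Proof of (c) and (d).} For (c), let $u\in\CF\cap\liez_F$: the inclusion $H_F\subset Z_K(u)$ comes from $u\in\liez_F$, while $Z_K(u)\subset H_F$ was seen in (b); so $H_F=Z_K(u)$, which is connected as the centralizer of a torus in a connected compact group. Because $F\neq\c$ forces $u\neq 0$, $Z_F$ has positive dimension. For (d), set $S:=Z_F$; the inclusion $\overline{\exp(\R u)}\subset S$ gives $Z_K(S)\subset Z_K(u)=H_F$, while $S\subset Z(H_F)$ gives $H_F\subset Z_K(S)$, so $H_F=Z_K(S)$. Properties (b), (c), (d) of Theorem~\ref{facciona-orbita} are then immediate from Lemma~\ref{hesso} and the definition of $H_F$. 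To verify (a), I would use Corollary~\ref{F-simplettica}: $\ext F=H_F\cd x$ is a symplectic orbit, so by \cite[Thm.~26.8]{guillemin-sternberg-techniques} the stabilizer $(H_F)_x$ equals the stabilizer of the $H_F$-momentum image of $x$ and therefore contains $Z(H_F)$; hence $Z_F\subset (H_F)_x$ for every $x\in\ext F$, and $Z_F$ fixes $\ext F$, and therefore $F$, pointwise.

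The main obstacle I anticipate is this final point: showing that the whole torus $S=Z_F$, and not merely $\overline{\exp(\R u)}$, fixes $F$ pointwise requires the symplectic-orbit identification of point stabilizers with centralizers of momentum-map images, which is what lifts fixation by the single one-parameter subgroup $\exp(\R u)$ exposing the face to fixation by the entire identity component of $Z(H_F)$.
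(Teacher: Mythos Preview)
Your argument is correct and follows essentially the same route as the paper: use Corollary~\ref{interseco-monotono} for (a), Proposition~\ref{u-cono} to produce an $H_F$-fixed $u\in\CF$ for (b), and the double inclusion $H_F\subset Z_K(u)\subset H_F$ for (c). Two remarks are worth making.

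First, your ``main obstacle'' in (d) is self-imposed. The paper simply applies Lemma~\ref{facciona-esposta} with $S=\overline{\exp(\R u)}$ (for the $u\in\CF\cap\liez_F$ found in (b)): since $H_F=Z_K(u)=Z_K(S)$ by (c), this lemma immediately yields properties (a)--(c) of Theorem~\ref{facciona-orbita} for $H_F$, with no need to show that the full torus $Z_F$ acts trivially on $F$. Your argument via the symplectic-orbit stabilizer identity is correct (and is indeed the content of Corollary~\ref{H-buoni}), but it is not needed here; Theorem~\ref{facciona-orbita} only asks for \emph{some} torus $S$ with $H_F=Z_K(S)$, and the one-parameter torus already does the job.

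Second, a small citation slip in (b): the existence of $u\in\liet$ with $\sigma=F_u(P)$ is not Proposition~\ref{u-cono} but the standard fact that every face of a polytope is exposed (the paper cites \cite[p.~95]{schneider-convex-bodies}). Proposition~\ref{u-cono} enters only in the second step, to upgrade to an $H_F$-fixed exposing vector. Also, your direct conjugation of the subgroup $H$ from Theorem~\ref{facciona-orbita} inside $H_F$ (to arrange $T\subset H$ in part (a)) is valid and slightly streamlines the paper's two-stage treatment, which first proves (a) under the extra hypothesis $T\subset H$ and only removes it after establishing (d).
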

\begin{proof}
  We start by proving (a) under the assumption that the maximal torus
  $T$ is contained in some subgroup $H$ that has the properties listed
  in Theorem \ref{facciona-orbita}.  By Lemma \ref{medio-simplettico}
  $\sigma : = F \cap \liet = F\cap \polp$ is a proper face of $\polp$.
  Since all faces of a polytope are exposed
  \cite[p. 95]{schneider-convex-bodies}, there is a vector $u\in
  \liet$ such that $\sigma$ equals the exposed face of $\polp$ defined
  by $u$, i.e.  $ \sigma = F_u (\polp)$.  Since $u \in \liet$ and
  $P=\pi(\OO)$, $h_\polp(u) =\max_{x\in \OO} \sx u, x\xs = h_\c (u)$.
  Set $F' : = F_u(\c)$.  $F'$ is a $T$-invariant face since $u$ is
  fixed by $T$. We wish to show that $F=F'$. The inclusion $F \subset
  F'$ is immediate.  Indeed if $x \in F$, then $\pi(x) \in \sigma$, so
  $\sx x, u \xs = h_P(u)= h_\c(u)$. It is also immediate that $F'\cap
  \liet = \sigma $.  So we have two faces $F$ and $F'$ with $F\cap
  \liet = F'\cap \liet = \sigma$.  Set $H':=Z_K(u)$. By Lemma
  \ref{hesso} $\ext F' = \ml(\mom_u)$ is an $ H'$-orbit and $H'$
  satisfies (a)-(d) of Theorem \ref{facciona-orbita} for $F'$.
  Clearly $T\subset H'$ since $u\in \liet$, and by hypothesis also
  $T\subset H$.  We can therefore apply Corollary
  \ref{interseco-monotono} and we get $F=F'$. In particular $F= F_u
  (\c)$ is an exposed face.  We have thus proved (a) under the
  assumption that $T \subset H$ for some $H$ as in Theorem
  \ref{facciona-orbita}.  Next we show that the vector $u$ can be
  chosen inside $\liez_F$.  The subgroup $H_F \subset K$ is compact
  and preserves both $\c$ and $F$.  By Proposition \ref {u-cono} there
  is a vector $u\in \CF$ that is fixed by $H_F$.  Note that $H_F$ is
  of maximal rank since $H\subset H_F$.  If $T$ is a maximal torus
  contained in $H_F$, then $u$ is is fixed by $T$, so $u \in \liet
  \subset \lieh_F$. It follows that $u\in \lieh_F$ and since $H_F$
  fixes $u$ it follows that $u\in \liez_F$. Thus (b) is proved.  To
  prove (c) assume that $u\in \liez_F$ and that $F = F_u (\c)$.  Then
  $H_F \subset Z_K(u)$ since $u\in \liez_F$.  On the other hand $\ext
  F = F_u(\c) \cap \OO = \ml(\mom_u) = Z_K(u) \cd x$ by Lemma
  \ref{hesso}. Therefore $Z_K(u)$ preserves $F$ and therefore $Z_K(u)
  \subset H_F$ by definition. So $H_F =Z_K(u)$ and (c) is proved.  (d)
  follows from Lemma \ref {facciona-esposta} and the fact that
  $H_F=Z_K(u )$. Now we know that $H_F$ itself has the properties of
  Theorem \ref{facciona-orbita}. Hence (a) holds for any torus $T
  \subset H_F$.
\end{proof}

\begin{remark}
  In general the faces of an orbitope are not necessarily exposed. For
  example 4-dimensional Carath\'eodory orbitopes have non-exposed
  faces, see \cite[Thm. 1(5b)]{smilansky-1985} (note that the author
  uses the word ``facelet'' for face and ``face'' for exposed
  face). It is important to understand whether an orbitope has only
  exposed faces.  Indeed this is Question 1 in
  \cite{sanyal-sottile-sturmfels-orbitopes}. The previous theorem
  shows that this is always the case for coadjoint orbitopes.
\end{remark}

  \begin{cor}
    If $\OO ' \subset \OO$ is a smooth submanifold, then $\conv
    (\OO')$ is a face of $\c$ if and only if there is a vector $u$
    such that $\OO'=\ml (\mom_u)$.
  \end{cor}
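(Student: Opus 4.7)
The plan is to reduce the statement to Lemma \ref{hesso} and Theorem \ref{tutte-esposte} via the two identifications $\OO = \ext \c$ (Lemma \ref{extO=O}) and $\ext F = F \cap \ext \c$ (Lemma \ref{ext-facce}). The forward direction hinges on exposedness of every face; the reverse direction is a direct reading of Lemma \ref{hesso}.

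For the ``only if'' direction, assume $F := \conv(\OO')$ is a face of $\c$. If $F$ is proper, then Theorem \ref{tutte-esposte}(b) yields a vector $u \in \liek$ with $F = F_u(\c)$, and Lemma \ref{hesso}(ii) gives $\ext F = \ml(\mom_u)$. It then remains to check $\OO' = \ext F$. The inclusion $\OO' \subset \ext F$ follows at once from $\OO' \subset F \cap \OO = F \cap \ext \c = \ext F$. For the reverse inclusion, I would use the elementary fact that if a set $S$ satisfies $F = \conv(S)$, then $\ext F \subset S$: any $x \in \ext F$ is by Carath\'eodory a finite convex combination of points in $\OO'$, and extremeness forces $x$ to coincide with one of those points, hence $x \in \OO'$. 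The improper case $F = \c$, if admitted, corresponds to $\OO' = \OO$ and is handled by taking $u$ to be zero, or by observing that it is excluded if one restricts attention to proper faces.

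For the ``if'' direction, assume $\OO' = \ml(\mom_u)$ for some $u$. Setting $F := F_u(\c)$, the proof of Lemma \ref{hesso} already establishes the identity $F_u(\c) = \conv(\ml(\mom_u))$, so $\conv(\OO') = F$ is a face of $\c$.

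The only potentially subtle point is the inclusion $\ext F \subset \OO'$ in the forward direction; this is a purely convex-geometric observation and uses none of the Lie-theoretic structure. The substantive input is Theorem \ref{tutte-esposte}, which upgrades ``face'' to ``exposed face'' and hence allows one to invoke Lemma \ref{hesso}. I do not foresee any serious obstacle.
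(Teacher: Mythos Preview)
Your proposal is correct and follows essentially the same route as the paper: both arguments rest on Lemma \ref{hesso} together with the exposedness of all faces (Theorem \ref{tutte-esposte}). The only cosmetic difference is that the paper establishes $\ext F = \OO'$ up front via the sphere argument of Lemma \ref{extO=O} (points on a sphere are automatically extreme), whereas you derive the two inclusions separately using Lemma \ref{ext-facce} and a Carath\'eodory-type observation; both are valid and equivalent.
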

  \begin{proof}
    Set $F = \conv (\OO')$.  From the fact that $\OO$ is contained in
    a sphere, it follows as in Lemma \ref{extO=O} that $\ext F =
    \OO'$. Therefore the statement follows immediately from Lemma
    \ref{hesso} and the fact that every face of $\c$ is exposed.
  \end{proof}
  This is a first characterization of the submanifolds that appear as
  $\ext F$ for some face $F$.  In \S \ref {complex-structure} we will
  see that this characterization becomes much more transparent using
  the complex structure of $\OO$.  An explicit characterization in
  terms of root data will be given in \S \ref{satake-section}.

  Various results about the faces have been established using
  \emph{some} subgroup $H$ satisfying the properties stated in Theorem
  \ref{facciona-orbita}.  Now we know that $H_F$ does satisfy these
  properties. Hence we can state those results more cleanly.  This is
  done in Theorem \ref{enunciatone} below. Next in Lemma
  \ref{inclusioni} we will make precise the possible freedom in the
  choice of the group $H$. First of all decompose $H_F$ as in Lemma
  \ref{lemma-y0}:
  \begin{gather}
    \label{eq:HF-dec}
    H_F = Z_F \cd K_F \cd K_F'.
  \end{gather}
  $Z_F$ is defined in \eqref{def-hf}, $K_F$ is the product of the
  simple factors of $(H_F, H_F)$ that act nontrivially on $\ext F$ and
  $K'_F$ is the product of the remaining factors.

  \begin{teo}
    \label{enunciatone}
    Let $T\subset K$ be a maximal torus.
    \begin{enumerate}
    \item If $F \subset \c$ is a proper $T$-invariant face, then $
      \sigma :=F\cap \liet = \pi(F) = \pi(\ext F)$ is a proper face of
      the momentum polytope $P$ and $ \ext F $ is a $
      Z_K(\sigma^\perp) $-orbit.
    \item If $F_1$ and $F_2$ are $T$-invariant proper faces, then $F_1
      \subset F_2$ if and only if $F_1 \cap \liet \subset F_2 \cap
      \liet$.
    \item \label{enunciatone-c} If $F_1$ and $F_2$ are $T$-invariant
      proper faces, then $F_1 = F_2$ if and only if $F_1 \cap \liet =
      F_2 \cap \liet$.
    \item
      \label{lemma-x0-x1}
      If $x\in \ext F$, then $x=x_0 + x_1$ with $x_0 \in \liez_F$ and
      $x_1 \in \liek'_F$. Moreover
      \begin{gather}
        \label{extfx0}
        \ext F = x _0 + K_F \cd x_1
      \end{gather}
      and $K_F \cdot x_1 \subset \liek_F$ is full.
    \end{enumerate}
  \end{teo}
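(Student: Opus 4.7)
The plan is to derive (a)--(d) by repackaging results already established, using the fact that by Theorem \ref{tutte-esposte} the subgroup $H_F$ itself satisfies the properties (a)--(d) of Theorem \ref{facciona-orbita}. Since $F$ is $T$-invariant one has $T \subset H_F$, and since $H_F = Z_K(u)$ for some nonzero $u$ by Theorem \ref{tutte-esposte}(c), it has maximal rank, so $T$ is in fact a maximal torus of $H_F$. With this in place every invocation of Lemmas \ref{proiezione-intersezione}, \ref{medio-simplettico} and \ref{lemma-y0} with $H = H_F$ is legitimate.

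For (a), I would first apply Lemma \ref{proiezione-intersezione} to obtain $\sigma = \pi(F) = \pi(\ext F) = F \cap \liet$ and to conclude that $\sigma$ is a nonempty face of $P$. Then Lemma \ref{medio-simplettico} upgrades this to the assertion that $\sigma$ is a \emph{proper} face of $P$ and that $\ext F$ is a $Z_K(\sigma^\perp)$-orbit.

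For (b), the forward direction is trivial. For the converse, assume $\sigma_1 := F_1 \cap \liet \subset F_2 \cap \liet =: \sigma_2$. By Theorem \ref{tutte-esposte}(b) there is a vector $u \in \liez_{F_2}$ with $F_2 = F_u(\c)$; since $u$ is fixed by $H_{F_2}$ and $T \subset H_{F_2}$, we have $u \in \liet$. For any $x \in F_1$ the projection $\pi(x)$ lies in $\sigma_1 \subset \sigma_2 \subset F_2 = F_u(\c)$, so $\sx u, \pi(x)\xs = h_\c(u)$. Because $u \in \liet$, also $\sx u, x\xs = \sx u, \pi(x)\xs = h_\c(u)$, whence $x \in F_u(\c) = F_2$. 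Part (c) is then immediate from (b), and can also be read off from Corollary \ref{interseco-monotono}.

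Finally, for (d), the decomposition \eqref{eq:HF-dec}, namely $H_F = Z_F \cdot K_F \cdot K_F'$, is exactly the one required by Lemma \ref{lemma-y0} with $Z = Z_F$, $L = K_F$ and $L' = K_F'$. That lemma then yields the stated splitting $x = x_0 + x_1$, the identity \eqref{extfx0}, and the fullness of $K_F \cdot x_1$ in $\liek_F$. No step is really difficult: the theorem is essentially a consolidated restatement of previously proved material, and the only point requiring some care is the verification that the exposing vector $u$ produced in (b) actually lies in $\liet$, which however is automatic from $T \subset H_{F_2}$ and $u \in \liez_{F_2}$.
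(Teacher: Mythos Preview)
Your proof is correct. Parts (a), (c), and (d) match the paper's approach exactly: invoke Lemmas \ref{proiezione-intersezione} and \ref{medio-simplettico} for (a), deduce (c) from (b), and apply Lemma \ref{lemma-y0} with $H=H_F$ for (d).

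For (b) your argument differs from the paper's. The paper argues as follows: set $\sigma_i:=F_i\cap\liet$, pick $x\in\ext\sigma_1$ (so $x\in\ext F_1\cap\ext F_2$ via $\ext F=F\cap\OO$), note that $\sigma_1\subset\sigma_2$ implies $\sigma_2^\perp\subset\sigma_1^\perp$ and hence $Z_K(\sigma_1^\perp)\subset Z_K(\sigma_2^\perp)$, and conclude $\ext F_1=Z_K(\sigma_1^\perp)\cdot x\subset Z_K(\sigma_2^\perp)\cdot x=\ext F_2$ using (a). Your route instead produces an exposing vector $u\in\liez_{F_2}\subset\liet$ for $F_2$ and checks directly via the support function that $F_1\subset F_u(\c)$. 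Both arguments are short and valid; the paper's stays entirely within the orbit description already established in (a), while yours is a self-contained convex-geometric computation that avoids the centralizer inclusion. Either is fine.
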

  \begin{proof}
    If $F$ is $T$-stable, then $T\subset H_F$. So (a) follows from
    Lemma \ref{medio-simplettico}.  (b) Set $\sigma _i :=F_i \cap
    \liet_i$.  If $F_1 \subset F_2$, then clearly $\sigma _1 \subset
    \sigma_2$. To prove the converse, assume that $\sigma _1 \subset
    \sigma_2$ and pick $x \in \sigma_1$.  Then $Z_K(\sigma^\perp_1)
    \subset Z_K(\sigma_2^\perp)$ and $\ext F_i = Z_K(\sigma_i^\perp)
    \cd x$. Thus $\ext F_1 \subset \ext F_2$.  (c) follows
    immediately.  (d) is just Lemma \ref{lemma-y0} stated for $H=H_F$.
  \end{proof}

  \begin{lemma} \label{inclusioni} If $F\subset \c$ is a face and $H
    \subset K $ is a connected subgroup, such that $ F \subset \lieh $
    and $\ext F$ is an $H$-orbit, then $K_F \subset H \subset H_F$ and
    $K_F =L$.
  \end{lemma}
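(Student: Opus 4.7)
The plan is to deduce all three assertions from a single Lie-algebra identity obtained by computing the direction of the affine span of $\ext F$ in two different ways.

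First I dispose of the inclusion $H \subset H_F$. Since $\ext F$ is by hypothesis an $H$-orbit and $F = \conv(\ext F)$ by Lemma \ref{extO=O}, the group $H$ preserves $F$, so $H \subset H_F$ directly from the definition \eqref{def-hf} of $H_F$.

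For the remaining claims, pick $x \in \ext F$ and consider the two decompositions $H = Z \cdot L \cdot L'$ as in \eqref{eq:H-dec} and $H_F = Z_F \cdot K_F \cdot K_F'$ as in \eqref{eq:HF-dec}. Applying Lemma \ref{lemma-y0} to $H$ gives $x = z + l$ with $z \in \liez(\lieh)$, $l \in \liel$, and $\ext F = z + L \cdot l$, the orbit $L \cdot l$ being full in $\liel$. Theorem \ref{tutte-esposte}(d) ensures that $H_F$ itself satisfies the hypotheses of Lemma \ref{lemma-y0}, so I may likewise write $x = x_0 + x_1$ with $x_0 \in \liez_F$, $x_1 \in \liek_F$, and $\ext F = x_0 + K_F \cdot x_1$ with $K_F \cdot x_1$ full in $\liek_F$. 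Consequently the affine span of $\ext F$ equals $z + \liel = x_0 + \liek_F$, and since the direction of an affine subspace is uniquely determined, $\liel = \liek_F$ as subspaces of $\liek$.

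Because $L$ and $K_F$ are both connected compact (hence closed) subgroups of $K$ sharing the same Lie algebra, I conclude $K_F = L$. By construction $L \subset H$, which upgrades to $K_F \subset H$ and completes the chain $K_F \subset H \subset H_F$. The only genuine content is the uniqueness of the direction of $\aff(\ext F)$, which rests on the two fullness statements; no serious obstacle is expected beyond carefully tracking which factor of each decomposition carries the full orbit and which sits in the center.
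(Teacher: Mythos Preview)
Your proof is correct and follows essentially the same route as the paper: both arguments establish $H\subset H_F$ from the definition, then compute the affine span of $F$ (equivalently of $\ext F$) once via the decomposition of $H$ and once via that of $H_F$, conclude $\liel=\liek_F$ from uniqueness of the direction, and deduce $L=K_F$ since connected subgroups with the same Lie algebra coincide. The only cosmetic difference is that you invoke Theorem~\ref{tutte-esposte}(d) explicitly where the paper cites Theorem~\ref{facciona-orbita}, which amounts to the same thing.
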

  \begin{proof}
    Necessarily $F\neq \vacuo$. Since $\ext F$ is an $H$-orbit, $H$
    preserves $\ext F$, hence $F$. So $H \subset H_F$ by definition
    \eqref{def-hf}. To prove the opposite inclusion, split as usual
    $H=Z\cd L \cd L'$ and write $x=x_0 + x_1$ as in Lemma
    \ref{lemma-y0}. The orbit $L\cd x_1 \subset \liel$ is full, so the
    affine span of $F$ is $x_0 + \liel$.  Since also $H_F$ has the
    properties stated in Theorem \ref{facciona-orbita} we can repeat
    the same reasoning for $H_F$ instead of $H$. Thus we get that the
    affine span of $F$ is $x_0 + \liek_F$. Therefore $\liel=
    \liek_F$. So $L$ and $K_F$ are connected subgroups of $K$ with the
    same Lie algebra and therefore coincide. This implies $K_F = L
    \subset H$.
  \end{proof}
  \begin{Example}\label{exe1}
    Set $\liek=\mathfrak{su}(n+1)=\{ X \in \mathfrak{gl} (n+1,\C ) :\,
    X+X^*=0, \ \mathrm{Tr}(X)=0 \} $, $\mathcal{H}=\{X \in \gl (n+1):
    X=X^*\}$ and $\mathcal{H}_1=\{X \in \mathcal{H}:{Tr}(X)=1\}$.  We
    identify $\mathfrak{su}(n+1)$ with $\mathcal{H}_1$ using the map
    \[
    \phi :\su(n+1) \ra \mathcal{H}_1\qquad \phi(X)= iX +
    \frac{\mathrm{Id}_{n+1}}{n+1}.
    \]
    The vector space of Hermitian matrices is endowed with an
    invariant scalar product, given by $\langle A,B \rangle
    =\mathrm{Tr}(AB)$.  Let $\OO \subset \su(n+1)$ be the coadjoint
    orbit corresponding to $\PP^n(\C)$ endowed with the Fubini-Study
    metric. Then $\OO'=\phi(\OO)$ is the set of orthogonal projectors
    onto lines, i.e.
    \[
    \OO'=\{A \in \mathcal{H}:\, A^2=A, \mathrm{rank}(A)=1 \}.
    \]
    Using the spectral theorem it is easy to check that
    \[
    \OO'=\{A \in \mathcal{H}_1:\, A\geq 0,\ \mathrm{rank}(A)=1 \}
    \]
    and
    \[
    \widehat{\OO'}=\{A\in \mathcal{H}_1:\, A\geq 0 \}.
    \]
    Given a Hermitian matrix $u\neq 0$ we wish to study the face
    \begin{gather*}
      F:=F_u(\cp).
    \end{gather*}
    We can assume that $u$ be tangent to $\mathcal{H}_1$, i.e. $\Tr u
    = 0$.  Let
    \[
    \C^{n+1}=V_1 \oplus \cdots \oplus V_s
    \]
    be its eigenspace decomposition, i.e.  $u_{|_{V_i}}=\mu_i
    \mathrm{Id}_{{V_i}}$.  Since $u\neq 0$ and $\Tr u =0 $ $s>1$.  We
    assume $\mu_1 < \mu_2 < \cdots < \mu_s$. Let
  $$
  \mom: \OO' \ra \R \qquad \Phi_u (x)=\langle u,x \rangle$$ be the
  height function with respect to $u$. The critical set of $\mom_u$ is
  $ \{A \in \OO': [A,u]= 0\}$. Since $[A, u]=0$ if and only if $A(V_i)
  \subset V_i$, it follows that this is the set of projectors onto
  lines that are contained in some of the $V_i$'s, i.e.
  $\Crit(\mom_u) = \PP(V_1) \sqcup \cds \sqcup \PP(V_s)$.  For the
  same reason
  $$
  Z_{\SU(n+1)} (u)=\mathrm{S}(\mathrm{U} (V_1) \times \cdots \times
  \mathrm{U} (V_s)).
  $$
  Let $v_i$ be a non zero vector of $V_i$ and let $P_{v_i}$ denote the
  orthogonal projection onto the complex line $\C v_i$. Then
  \[
  \PP(V_i) = Z_{\SU(n+1)} (u) \cdot {P}_{v_i}.
  \]
  If $A\in \Crit(\mom_u)$, then
  \[
  \Phi_u(A)=\mu_1 \mathrm{Tr}(A\restr{V_1})+\cdots+ \mu_s
  \mathrm{Tr}(A\restr{V_s}).
  \]
  Since $ \operatorname{Tr} (A\restr {V_i}) \geq 0$ and
  \begin{gather*}
    \sum_{i=1}^s \operatorname{Tr} (A\restr {V_i}) = \operatorname{Tr}
    A = 1
  \end{gather*}
  the maximum of $\Phi_u$ is equal to $\mu_s$ and it is attained
  exaclty on $\PP(V_s)$.  This means that
  \begin{gather*}
    \ext F = \ml(\mom_u) = \PP(V_s) \subset \OO'
    \\
    F = \conv\left ( \PP \left(V_s \right )\right) = \{A \in \mathcal
    {H}_1 : A \geq 0, \, A\restr {V_s^\perp} \equiv 0\}.
  \end{gather*}
  So $F$ consists of the operators in $\cp$ that are supported on
  $V_s$.  Notice that $ H_{F}=\mathrm{S}(\mathrm{U} (V_s)\times
  \mathrm{U} (V_s^\perp )) $ and $ \liez_F = i\R v$ where $v $ is
  the Hermitian operator such that
  \begin{gather*}
    v\restr{V_s}=\frac{\mathrm{Id}}{\dim V_s} \qquad
    v\restr{V_s^\perp} = -\frac{\mathrm{Id} }{\dim V_s^\perp}.
  \end{gather*}
  In fact $F=F_v(\cp)$.  In particular in this example $\CF $ is much
  larger than $\liez_F \cap \CF$.  The above computation shows that to
  each face corresponds a subspace, namely $V_s$.  Viceversa, given a
  subspace $W\subset \C^{n+1}$, let $w$ be the Hermitian operator such
  that
  \begin{gather*}
    w\restr{W}=\frac{\mathrm{Id}}{\dim W} \qquad w\restr{W^\perp} =
    -\frac{\mathrm{Id} }{\dim W^\perp}.
  \end{gather*}
  Then
  \[
  F_w (\c)= \{A\in \mathcal{H}_1:\, A\geq 0, \ 
  A\restr{W^\perp} = 0 \}= \conv (\PP(W)).
  \]
  Therefore the faces of $\cp$ are in one-to-one correspondence with
  the subspaces of $\C^{n+1}$.
\end{Example}

\section{The role of the momentum polytope}
\label{relazione-politopo}

In this section we prove Theorem \ref{main}. We will start by
constructing the inverse of the map considered in Theorem \ref{main}
and we will prove in detail that it passes to the quotient. At the end
(Theorem \ref{prova-main}) we will show that the two maps are inverse
to each other.

Consider a full orbit $\OO \subset \liek$, a maximal torus $T \subset
K$ and the momentum polytope $P$. In this section we will study in
detail the relation between the faces of $\c$ and those of $P$.
Denote by $\faces$ the set of proper faces of $\OO$ and by $\facesp$
the proper faces of the polytope $P$.  If $F$ is a face of $\OO$ and
$a\in K$, then $a\cdot F$ is still a face, so $K$ acts on $\faces$.
Similarly $W=W(K,T)$ acts on $\facesp$.  We wish to show that $\faces
/ K \cong \facesp /W$.

\begin{lemma} \label{facce-1} If $F $ is a face of $\OO$, there is a
  $T$-stable face $F' $ which is conjugate to $F$, i.e. $F' = a\cd F$
  for some $a\in K$.  $F'$ is unique up to conjugation by elements of
  $N_K(T)$.
\end{lemma}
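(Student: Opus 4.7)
The existence of a $T$-stable conjugate is the easy half. By Corollary \ref{torus-preserves} some maximal torus $T'\subset K$ preserves $F$, and by conjugacy of maximal tori in the compact connected group $K$ there is $a\in K$ with $aT'a\meno=T$. Setting $F':=a\cd F$, the identity $T\cd F'=aT'a\meno\cd a F=a\cd T'F=aF=F'$ shows that $F'$ is $T$-stable.

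For uniqueness, suppose $F'_1=a_1\cd F$ and $F'_2=a_2\cd F$ are two $T$-stable conjugates of $F$. Put $b:=a_2 a_1\meno$, so that $F'_2=b\cd F'_1$. The goal is to adjust $b$ to an element of $N_K(T)$ still taking $F'_1$ to $F'_2$. The crucial observation is that both $T$ and $bTb\meno$ preserve $F'_2$: the first by hypothesis, the second because
\begin{gather*}
  bTb\meno \cd F'_2 = bT \cd F'_1 = b\cd F'_1 = F'_2.
\end{gather*}
Thus both $T$ and $bTb\meno$ lie in the stabilizer $H_{F'_2}:=\{g\in K:gF'_2=F'_2\}$, and each, being a maximal torus of $K$, is automatically a maximal torus of $H_{F'_2}$.

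The plan is now to invoke Theorem \ref{tutte-esposte}(c), which guarantees that $H_{F'_2}$ is connected. Conjugacy of maximal tori inside the compact connected group $H_{F'_2}$ then yields an $h\in H_{F'_2}$ with $h\cd bTb\meno\cd h\meno=T$, i.e.\ $hb\in N_K(T)$. Since $h$ preserves $F'_2$ we obtain $(hb)\cd F'_1=h\cd F'_2=F'_2$, so $n:=hb\in N_K(T)$ is the desired conjugator.

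\textbf{Main obstacle.} The entire uniqueness argument hinges on the connectedness of $H_F$ supplied by Theorem \ref{tutte-esposte}(c); without it, $T$ and $bTb\meno$ would merely be two maximal tori of $K$ sitting in a common (possibly disconnected) subgroup, and there would be no reason to expect them to be conjugate within that subgroup.
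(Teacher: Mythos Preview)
Your proof is correct and follows essentially the same path as the paper's: both conjugate into a $T$-stable face via Corollary \ref{torus-preserves}, and for uniqueness both observe that $T$ and its conjugate are maximal tori of the stabilizer $H_{F_2}$ and hence conjugate there by some $h\in H_{F_2}$, yielding an element of $N_K(T)$ carrying one face to the other. One small remark on your ``main obstacle'': connectedness of $H_F$ is not actually essential here, since tori are connected and therefore lie in the identity component $(H_{F_2})^0$, and maximal tori of any compact Lie group are conjugate by elements of the identity component---the paper's proof accordingly does not invoke Theorem \ref{tutte-esposte}(c) at this step.
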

\begin{proof}
  By Corollary \ref{torus-preserves} $F$ is preserved by some maximal
  torus $S \subset K$. There is $a\in K$ such that $S = a\meno
  Ta$. Hence $F' = a\cd F$ is preserved by $T$.  To prove
  uniqueness assume that $F_1$ and $F_2$ be $T$-stable faces of $\OO$
  and that $F_2 = a \cdot F_1$ for some $a\in K$. Then $H_{F_2} = a
  H_{F_1} a\meno$. In particular both $T$ and $ aTa\meno$ are
  contained in $H_{F_2}$, so there is $b\in H_{F_2}$, such that $aT
  a\meno = b T b\meno$. Then $w = b\meno a \in N_K(T)$ and $w \cdot F_1 =
  b\meno a \cdot F_1 = b\meno F_2 = F_2$.
\end{proof}

Define a map
\begin{gather*}
  \phi : \faces / K \ra \facesp /\Weyl
\end{gather*}
by the following rule: given $[F] \in \faces$ choose a $T$-invariant
representative $F$ and set $\phi( [F]): = [F\cap \liet]$.  By Lemma
\ref{proiezione-intersezione} $F\cap \liet$ is indeed a face of the
polytope.  By Lemma \ref{facce-1} if $F'$ is $T$-stable and $[F'] =
[F]$ then $F'\cap \liet$ and $F\cap \liet$ are interchanged by some
element of $\Weyl$. This shows that the map $\phi$ is well-defined.

Now fix a face $F$ of $\c$ and a maximal torus $T \subset H_F$.  Since
$T\cap K_F$ is a maximal torus of $K_F$ and $T\cap K_F'$ is a maximal
torus of $K_F'$, corresponding to the decomposition \eqref{eq:HF-dec}
there is a splitting
\begin{gather*}
  \liet = \liez_F \oplus (\liet \cap \liek_F) \oplus (\liet \cap
  \liek'_F).
\end{gather*}
Denote by $ W_F$ and $W_F'$ the Weyl groups of $(K_F , K_F \cap T)$
and $(K_F', K_F' \cap T)$ respectively.  $W_F$ and $W_F'$ can be
considered as subgroups of $W$. They commute and have the following
sets of invariant vectors:
\begin{gather*}
\liet^{W_F} = \liez_F \oplus
\liek'_F \qquad \liet^{W_F'} = \liez_F \oplus \liek_F
\qquad
  \liet^{W_F \times W_F'} = \liez_F .
\end{gather*}

\begin{lemma} \label{facce-inv} Let $T \subset K$ be a maximal torus
  and let $F$ be a nonempty $T$-invariant face of $\OO$.  Set $ \sigma
  :=F \cap \liet$.   Then  (i) $W_F \times W_F'$ preserves $\sigma$; (ii) $ F
  = H_F \cd \sigma = K_F \cd \sigma$.
\end{lemma}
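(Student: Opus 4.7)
The plan is to address the two assertions in sequence, using the commuting structure of the decomposition $H_F = Z_F \cdot K_F \cdot K_F'$ in an essential way.

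For (i), I would realize $W_F$ and $W_F'$ as subgroups of $W = W(K,T)$ and show that every such element preserves $\sigma$. Given $w \in W_F$, I would lift it to a representative $n \in N_{K_F}(T \cap K_F)$. The key point is that the three factors $Z_F$, $K_F$, $K_F'$ commute pairwise (they are central, or distinct simple factors of the semisimple group $(H_F,H_F)$), so $n \in K_F$ fixes $Z_F$ and $T \cap K_F'$ pointwise and normalises $T \cap K_F$; hence $n$ lies in $N_K(T)$ and induces an element of $W$ that acts as $w$ on $\liek_F \cap \liet$ and trivially on $\liez_F \oplus (\liek_F' \cap \liet)$. Because $n \in K_F \subset H_F$ preserves $F$ and $n \in N_K(T)$ preserves $\liet$, it preserves $\sigma = F \cap \liet$. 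The argument for $W_F'$ is symmetric, and the two actions commute.

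For (ii), I would first dispose of the $K_F'$ and $Z_F$ factors. By the very definition of the decomposition \eqref{eq:HF-dec}, $K_F'$ acts trivially on $\ext F$ and therefore also on the convex hull $F$, so $K_F' \cdot \sigma = \sigma$. The torus $Z_F$ fixes $F$ pointwise by Corollary \ref{H-buoni}. Consequently $H_F \cdot \sigma = K_F \cdot \sigma$, and it suffices to prove $K_F \cdot \sigma = F$. The inclusion $K_F \cdot \sigma \subset F$ is immediate since $K_F \subset H_F$ preserves $F$ and $\sigma \subset F$.

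For the reverse inclusion I would pick any $x \in \ext \sigma$; by \eqref{estremi-rompipalle} $x \in \ext F$, and Theorem \ref{enunciatone}(d) gives $x = x_0 + x_1$ with $x_0 \in \liez_F$ and $x_1 = x - x_0 \in \liek_F \cap \liet$, together with $\ext F = x_0 + K_F \cdot x_1$. Since $x_1 \in \sigma - x_0$ and $x_0$ is $K_F$-fixed, this yields $\ext F \subset x_0 + K_F \cdot (\sigma - x_0) = K_F \cdot \sigma$. To upgrade this to $F \subset K_F \cdot \sigma$ I would invoke Lemma \ref{gichev}(ii) applied inside the compact connected semisimple group $K_F$ with maximal torus $T \cap K_F$: by (i) the set $\sigma - x_0 \subset \liek_F \cap \liet$ is $W_F$-invariant and convex, so $K_F \cdot (\sigma - x_0)$ is convex in $\liek_F$, and translating by $x_0$ shows $K_F \cdot \sigma$ is convex. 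Therefore $F = \conv(\ext F) \subset K_F \cdot \sigma$, and equality holds. The only delicate step is the bookkeeping in (i), namely verifying that a lift taken inside $K_F$ automatically acts trivially on the complementary summands of $\liet$; once that is in place, parts (ii) is a straightforward combination of Gichev's lemma with the orbit description \eqref{extfx0}.
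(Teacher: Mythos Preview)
Your proposal is correct. Part (ii) is essentially identical to the paper's argument: both reduce $H_F\cdot\sigma$ to $K_F\cdot\sigma$ by discarding the factors $Z_F$ and $K_F'$ that fix $\sigma$ pointwise, then use Gichev's Lemma~\ref{gichev} applied to the $W_F$-invariant convex set $\sigma-x_0\subset\liet\cap\liek_F$ to conclude that $K_F\cdot\sigma$ is convex, and finish with the two obvious inclusions.

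For part (i) you take a genuinely different route. You lift $w\in W_F$ to $n\in N_{K_F}(T\cap K_F)$ and observe that, because $K_F$ commutes with $Z_F$ and $K_F'$, the element $n$ centralises the other two summands of $\liet$ and hence lies in $N_K(T)$; since $n\in K_F\subset H_F$ also preserves $F$, it preserves $\sigma=F\cap\liet$. The paper instead computes $\sigma$ directly via Kostant's theorem: from $\ext F=x_0+K_F\cdot x_1$ one gets $\sigma=\pi(\ext F)=x_0+\conv(W_F\cdot x_1)=\conv(W_F\cdot x)$, which is visibly $W_F$-invariant, and then notes $\sigma\subset\liez_F\oplus(\liet\cap\liek_F)$ so that $W_F'$ fixes $\sigma$ pointwise. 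Your argument is more structural and avoids re-invoking Kostant; the paper's has the advantage of yielding the explicit description $\sigma=\conv(W_F\cdot x)$ for free, which also makes the inclusion $\sigma-x_0\subset\liet\cap\liek_F$ (needed to apply Lemma~\ref{gichev} inside $K_F$) immediate, whereas in your write-up that containment is asserted without justification---it does follow from $\aff F=x_0+\liek_F$, but you should say so.
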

\begin{proof}
  Recall that $\ext F = x_0 + K_F \cd x_1$. By Kostant theorem $\sigma
  =\pi(\ext F) = \pi(x_0 + K_F\cd x_1) = x_0 + \conv (W_F\cd x_1) =
  \conv (W_F\cd x)$. Hence  $W_F$ preserves $\sigma$. Moreover $\sigma
  \subset \liez_F \oplus ( \liet\cap \liek_F) $ hence $W'_F$ fixes
  $\sigma$ pointwise and (i) follows.  Similarly, since $\sigma
  \subset \liez_F \oplus \liek_F$, $Z_F\cd K'_F$ fixes $\sigma$
  pointwise.  Therefore $H_F \cd \sigma = K_F \cd \sigma$.  By Lemma
  \ref{gichev} $K_F \cd (\sigma -x_0)$ is convex and the same is true
  of $x_ 0 + K_F \cd (\sigma -x_0) = K_F \cd\sigma$.
  So $H_F \cd \sigma=K_F\cd \sigma$ is convex. Since $\ext F = H_F \cd
  x \subset H_F \cd \sigma$, it follows that $F \subset H_F \cd
  \sigma$. On the other hand $\sigma \subset F$ and $F $ is
  $H_F$-invariant, so also $H_F \cd\sigma \subset F$. This establishes
  (ii).
\end{proof}

If $\sigma$ is a face of $P$ set
\begin{gather*}
  G_\sigma : = \{g\ \in \Weyl: g (\sigma ) = \sigma\}.
\end{gather*}

\begin{lemma}
  If $\sigma \in \facesp$ there is a vector $u\in \liet$ that is fixed
  by $G_\sigma$ and such that $\sigma = F_u(P)$.  If $u$ is any
  such vector and $F:=F_u (\c)$, then $F\cap \liet =\sigma$,
  $G_\sigma = W_F\times W'_F$, $\liez_F = \liet^{G_\sigma}$ and $F$
  does not depend on $u$ but only on $\sigma$.
\end{lemma}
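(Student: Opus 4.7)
The plan is to establish existence of $u$ first and then, fixing any admissible $u$, to derive the four stated properties of $F := F_u(\c)$.

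For existence I would apply Proposition \ref{u-cono} with $E = P$, $F = \sigma$, and $G = G_\sigma$ viewed as a finite subgroup of $O(\liet)$. Since every face of a polytope is exposed, $\sigma$ qualifies as an exposed face of $P$, and $G_\sigma$ preserves both $P$ (as it sits inside $W$) and $\sigma$ (by definition). The proposition then produces a $G_\sigma$-fixed vector $u$ in the cone $C_\sigma := \{v \in \liet : \sigma = F_v(P)\}$.

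Fix such a $u$ and set $F := F_u(\c)$. Since $u \in \liet$, $F$ is $T$-stable, and the equality $F \cap \liet = \sigma$ is obtained exactly as in the proof of Theorem \ref{tutte-esposte}(a): the identity $h_P(u) = h_\c(u)$ (because $P = \pi(\c)$ and $u \in \liet$) yields $\sigma \subset F \cap \liet \subset \c \cap \liet = P$, so $F \cap \liet = F_u(\c) \cap P = F_u(P) = \sigma$. For the identity $G_\sigma = W_F \times W'_F$, Lemma \ref{facce-inv}(i) applied to $F$ gives the inclusion $W_F \times W'_F \subset G_\sigma$. This in turn forces $u \in \liet^{W_F \times W'_F} = \liez_F$, and since $u \in C_F$ by construction, Theorem \ref{tutte-esposte}(c) yields $H_F = Z_K(u)$. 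The standard identification of the Weyl group of a centralizer then gives $W(H_F, T) = \mathrm{Stab}_W(u)$, while the decomposition $H_F = Z_F \cdot K_F \cdot K'_F$ shows $W(H_F, T) = W_F \times W'_F$; any $g \in G_\sigma$ fixes $u$ by hypothesis, hence lies in $W_F \times W'_F$.

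The identification $\liez_F = \liet^{G_\sigma}$ is then immediate from $G_\sigma = W_F \times W'_F$ and the equality $\liet^{W_F \times W'_F} = \liez_F$ recorded just above Lemma \ref{facce-inv}. For independence of $F$ from $u$: for two admissible choices $u_1, u_2$, the faces $F_i := F_{u_i}(\c)$ are both $T$-stable and satisfy $F_i \cap \liet = \sigma$, so Theorem \ref{enunciatone}(c) forces $F_1 = F_2$. The only substantive technicality I anticipate is invoking the standard identity $W(Z_K(u), T) = \mathrm{Stab}_W(u)$ for the Weyl group of the centralizer of an element in a compact connected Lie group; this is a well-known fact that should be cited rather than reproved.
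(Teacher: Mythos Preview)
Your proof is correct and follows essentially the same route as the paper's: existence via Proposition~\ref{u-cono}, the inclusion $W_F\times W'_F\subset G_\sigma$ from Lemma~\ref{facce-inv}, the deduction $u\in\liez_F$ and $H_F=Z_K(u)$ via Theorem~\ref{tutte-esposte}(c), and then the identification of $G_\sigma$ with $\mathrm{Stab}_W(u)=W(H_F,T)=W_F\times W'_F$. You are in fact slightly more careful than the paper in explicitly verifying $F\cap\liet=\sigma$ before invoking Lemma~\ref{facce-inv}, and your use of Theorem~\ref{enunciatone}\ref{enunciatone-c} for the independence of $F$ from $u$ is a clean alternative to the paper's argument via $H_F=Z_K(\liet^{G_\sigma})$.
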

\begin{proof}
  The existence of $u$ follows directly from Lemma \ref{u-cono}.  By
  Lemma \ref{facce-inv} (ii) $W_F \times W'_F \subset G_\sigma$, so
  $u\in \liet^{W_F \times W_F'} = \liez_F$ and using Theorem
  \ref{tutte-esposte} it follows that $H_F = C_K(u)$.  Therefore the
  subgroup of $W$ that fixes $u$ is the Weyl group of $(H_F, T)$
  i.e. $W_F\times W'_F$. It follows that $W_F\times
  W'_F=G_\sigma$. From this it follows that $\liez_F =
  \liet^{G_\sigma}$, that $H_F = C_K(\liez_F ) =
  C_K(\liet^{G_\sigma})$ and in particular that $H_F$ and hence $\ext
  F$ and $F$ only depend on $\sigma$.
\end{proof}

Define a map
\begin{gather*}
  \psi : \facesp /\Weyl \ra \faces / K
\end{gather*}
by the following rule: given $\sigma$, fix $u\in \liet^{G_\sigma}$
such that $\sigma = F_u(P)$ and set
\begin{gather*}
  \psi( [\sigma]): = [F_u (\c)].
\end{gather*}
Thanks to the previous lemma $F_u(\c)$ depends only on
$\sigma$, not on $u$. It is clear that $\psi$ is well-defined on
equivalence classes.

\begin{teo}
\label{prova-main}
  The maps $\psi$ and $\phi$ are inverse to each other and
  $\psi([\sigma]) = [ Z_K(\sigma^\perp)\cd \sigma]$.
\end{teo}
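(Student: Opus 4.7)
The plan is to verify the two composition identities $\phi\circ\psi=\mathrm{id}$ and $\psi\circ\phi=\mathrm{id}$ and then derive the explicit formula $\psi([\sigma])=[Z_K(\sigma^\perp)\cd\sigma]$ by combining Lemma \ref{medio-simplettico} with Lemma \ref{facce-inv}. Essentially all the analytic and group-theoretic content has been put in place in the previous sections, so the work consists of checking that the two maps defined here match up.

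For $\phi\circ\psi=\mathrm{id}$, I would fix $[\sigma]\in\facesp/W$ and pick a representative $u\in\liet^{G_\sigma}$ with $\sigma=F_u(P)$, so that $\psi([\sigma])=[F_u(\c)]$. Since $u\in\liet$, the face $F:=F_u(\c)$ is $T$-invariant, so it is an admissible representative for computing $\phi$. The lemma immediately preceding the definition of $\psi$ shows that $F\cap\liet=\sigma$, so $\phi(\psi([\sigma]))=[F\cap\liet]=[\sigma]$.

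For $\psi\circ\phi=\mathrm{id}$, start with $[F]\in\faces/K$. By Corollary \ref{torus-preserves} (or Lemma \ref{facce-1}) we may assume $F$ is $T$-invariant. Set $\sigma:=F\cap\liet$, which is a proper face of $P$ by Lemma \ref{proiezione-intersezione}. Because $\sigma$ is a face of the polytope $P$, it is exposed, so there exists $u\in\liet$ with $\sigma=F_u(P)$; moreover, Proposition \ref{u-cono} applied to the compact group $G_\sigma$ acting on the cone $\{u\in\liet:\sigma=F_u(P)\}$ allows us to choose $u\in\liet^{G_\sigma}$. Theorem \ref{tutte-esposte}\,(a) then yields $F=F_u(\c)$, and hence $\psi([\sigma])=[F_u(\c)]=[F]$.

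It remains to verify the explicit formula. With $F=F_u(\c)$ and $\sigma=F\cap\liet$ as above, Lemma \ref{medio-simplettico} provides the double inclusion $K_F\subset Z_K(\sigma^\perp)\subset H_F$, using that $\ext F$ is already known to be both a $K_F$-orbit and a $Z_K(\sigma^\perp)$-orbit on any $x\in\ext\sigma$. Combining this with Lemma \ref{facce-inv}\,(ii), which gives $F=K_F\cd\sigma=H_F\cd\sigma$, the intermediate set $Z_K(\sigma^\perp)\cd\sigma$ is squeezed between two copies of $F$ and therefore equals $F$ itself. This yields $\psi([\sigma])=[Z_K(\sigma^\perp)\cd\sigma]$.

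I do not anticipate any serious obstacle: the only subtle point is the well-definedness/choice of $u\in\liet^{G_\sigma}$ in the definition of $\psi$ and in the verification of $\psi\circ\phi=\mathrm{id}$, and this is handled cleanly by Proposition \ref{u-cono} together with Theorem \ref{tutte-esposte}\,(a). Everything else is a direct bookkeeping of results already proved.
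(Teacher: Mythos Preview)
Your proposal is correct and follows essentially the same approach as the paper. The only minor divergence is in establishing $\psi\circ\phi=\mathrm{id}$: the paper, after checking $\phi\circ\psi=\mathrm{id}$, simply invokes the injectivity of $\phi$ from Theorem~\ref{enunciatone}\,\ref{enunciatone-c} to conclude that $\phi$ is a bijection with inverse $\psi$, whereas you verify the second composition directly via Theorem~\ref{tutte-esposte}\,(a). Both routes rest on the same underlying ingredients, and your derivation of the explicit formula via Lemma~\ref{medio-simplettico} and Lemma~\ref{facce-inv}\,(ii) matches the paper's exactly.
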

\begin{proof}
  Let $\sigma$ be a face of $\polp$. Choose $u\in \liet^{G_\sigma}$
  such that $\sigma = F_u(P)$.  Then $F_u(\c)$ is $T$-stable, so $\phi
  \circ \psi ([\sigma] ) = \phi ([F_u(\c)]) = [F_ u(\c)\cap \liet ] =
  [\sigma]$.  So $\phi \circ \psi$ is the identity.  It follows
  immediately from Theorem \ref{enunciatone} \ref {enunciatone-c} that
  $\phi$ is injective. Hence it is a bijection and $\psi = \phi\meno$.
  By Lemma \ref{medio-simplettico} $\ext F_u (\c)$ is a
  $Z_K(\sigma^\perp)$-orbit.  Hence $K_F \subset Z_K(\sigma^\perp)
  \subset H_F$. By Lemma \ref{facce-inv} (ii) we get $F_u(\c) =
  Z_K(\sigma^\perp) \cd \sigma$.
\end{proof}

\section{Smooth stratification}

\label{strata}

As we saw in the previous section 
the group $K$ acts on $\faces$, which is the set of faces of $\c$
and this action has a finite number of orbits, which are in one-to-one
correspondence with the orbits of the Weyl group on the finite set
$\facesp$.  Let $B$ denote one of the orbits of $K$ on $\faces$.  We
call $B$ a \emph{face type}. The set
\begin{gather*}
  \SB:= \bigcup_{F \in B} \relint F.
\end{gather*}
is a subset of $\partial \c$, because the faces $F \in B$ are
proper. Since every boundary point lies in exactly one open face
(Theorem \ref{schneider-facce})
\begin{gather*}
  \partial \c = \bigsqcup _{B \in \faces/ K} \SB.
\end{gather*}
We call $\SB$ the \emph{stratum} corresponding to the face type $B$.
The purpose of this section is to show that the strata $\SB$ yield a
stratification of $\c$ in the following sense.

\begin{teo}
  \label{stratification}
  The strata are smooth embedded submanifolds of $\liek$ and are
  locally closed in $\partial
  \c$.  
  For any stratum $\SB$ the boundary $\overline{\SB} \setminus \SB$ is
  the disjoint union of strata of lower dimension.
\end{teo}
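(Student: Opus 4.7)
The plan is to exhibit each stratum as the image of a smooth fiber bundle under a $K$-equivariant embedding and then analyze the topological boundary using compactness of $K$ together with the local convex structure of $\c$.

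Fix a representative $F_0 \in B$. Since $H_{F_0}$ preserves $F_0$ and hence $\relint F_0$ by Theorem \ref{tutte-esposte}, the evaluation map
\[
\Psi_B\colon K\times_{H_{F_0}}\relint F_0\longrightarrow\liek,\qquad [g,y]\longmapsto g\cdot y,
\]
is well-defined, smooth, $K$-equivariant, and surjects onto $\SB$. Its domain is a smooth fiber bundle over the flag manifold $K/H_{F_0}$ with fiber the relatively open convex set $\relint F_0$. I would first prove $\Psi_B$ is an embedding. Injectivity: if $g_1y_1=g_2y_2$ then $g:=g_2\meno g_1$ gives $y_2\in\relint F_0\cap\relint(gF_0)$, so $gF_0=F_0$ by Theorem \ref{schneider-facce} and the two classes agree. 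For the immersion property, the differential at $[e,y_0]$ sends $(v,w)\in\liek\oplus T_{y_0}(\relint F_0)$ to $[v,y_0]+w$, and by Theorem \ref{enunciatone} \ref{lemma-x0-x1} the vertical tangent space equals $\liek_{F_0}\subset\lieh_{F_0}$. Decompose $v=v_\parallel+v_\perp$ along $\liek=\lieh_{F_0}\oplus\lieh_{F_0}^\perp$; since $\lieh_{F_0}^\perp$ is a sum of root spaces for roots outside those of $\lieh_{F_0}^\C$, both $\lieh_{F_0}$ and $\lieh_{F_0}^\perp$ are stable under $\ad y_0$. Projecting the equation $[v,y_0]+w=0$ onto $\lieh_{F_0}^\perp$ yields $[v_\perp,y_0]=0$, so $v_\perp\in\liez_\liek(y_0)$. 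But any element of $K$ stabilizing $y_0$ must preserve the unique open face containing it, whence $K_{y_0}\subset H_{F_0}$ and $\liez_\liek(y_0)\subset\lieh_{F_0}$, forcing $v_\perp=0$. The kernel of $d\Psi_B$ is thus the $H_{F_0}$-tangent direction already quotiented out, so $\Psi_B$ is an immersion. An injective equivariant immersion from a bundle over a compact base is an embedding, making $\SB$ a smooth embedded submanifold of $\liek$.

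For the closure, let $y_n\in\SB$ with $y_n\in\relint(g_nF_0)$ converge to $y$. By compactness of $K$, extract $g_n\to g$; then $g_nF_0\to gF_0$ in Hausdorff distance and $y\in gF_0$. By Theorem \ref{schneider-facce} $y$ lies in the relative interior of a unique face $F'\subseteq gF_0$, itself a face of $\c$. If $F'=gF_0$ then $y\in\SB$; otherwise $F'\subsetneq gF_0$ and $\dim F'<\dim F_0$, so $y$ belongs to the stratum $\mathcal{S}_{B'}$ for $B':=K\cdot F'$, distinct from $B$. Conversely, for any proper subface $F'\subsetneq gF_0$, every point of $\relint F'$ is a limit of points in $\relint(gF_0)\subset\SB$, so $\mathcal{S}_{B'}\subset\overline{\SB}$. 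This yields the disjoint-union decomposition $\overline{\SB}\setminus\SB=\bigsqcup_{B'}\mathcal{S}_{B'}$.

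For local closedness it suffices to show $\SB$ is open in $\overline{\SB}$. The key convex-geometric fact is that if $y\in\relint F_0$ and $z\in\partial\c$ lies near $y$ with $z\in\relint F''$, then $F''\supseteq F_0$; otherwise $y\notin F''$ would preclude $z\to y$. Suppose a sequence $y_n\in\overline{\SB}\setminus\SB$ converged to some $y\in\SB$ (WLOG $y\in\relint F_0$); by the decomposition above each $y_n$ lies in $\relint F''_n$ with $F''_n\subsetneq g_nF_0$, while the observation forces $F_0\subseteq F''_n$ for $n$ large, giving $F_0\subseteq F''_n\subsetneq g_nF_0$. But $F_0$ and $g_nF_0$ have equal dimension, so this chain is impossible. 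Hence $\SB$ is open in $\overline{\SB}$, and each $\mathcal{S}_{B'}\subset\overline{\SB}\setminus\SB$, being a smooth submanifold contained in a closed nowhere-dense subset of $\overline{\SB}$, has strictly smaller dimension than $\SB$. The chief obstacle is the immersion step, which requires combining the centralizer decomposition of $\liek$ with the stabilizer inclusion $K_{y_0}\subset H_{F_0}$; once the embedding is in place, the remaining assertions follow from compactness of $K$ and Theorem \ref{schneider-facce}.
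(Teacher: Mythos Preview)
Your overall strategy---realise $\SB$ as the image of the associated bundle $K\times^{H_{F_0}}\relint F_0$, prove this map is an embedding, then read off the closure decomposition---is exactly the paper's approach, and your injectivity and immersion arguments coincide with those in the paper's Proposition~\ref{bundle}. Two steps, however, are not adequately justified.

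First, the sentence ``an injective equivariant immersion from a bundle over a compact base is an embedding'' is not a general fact: the fibre $\relint F_0$ is non-compact, so properness of $\Psi_B$ onto its image must be checked. The paper does this explicitly: if $g_n\cd x_n\to y\in\SB$, extract $g_n\to g$; then $x_n\to g\meno y\in F_0$, and one must still argue that $g\meno y\in\relint F_0$. This uses the face structure (the limit lies in $\relint((g')\meno g F_0)\cap F_0$, forcing $(g')\meno g\in H_{F_0}$), not merely equivariance over a compact base. Once properness is established, local closedness follows immediately from $\SB$ being an embedded submanifold, so your separate convex-geometric argument for it is unnecessary (and the ``key fact'' you invoke there is asserted rather than proved).

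Second, your dimension comparison---``a smooth submanifold contained in a closed nowhere-dense subset of $\overline{\SB}$ has strictly smaller dimension''---does not follow, because $\overline{\SB}$ is not itself a manifold. One can rescue this by observing that $\overline{\SB}\setminus\SB$ is contained in the image of the smooth map $K\times^{H_{F_0}}\partial_{\mathrm{rel}}F_0\to\liek$, whose source has dimension $\dim\SB-1$; hence any submanifold inside it has dimension at most $\dim\SB-1$. The paper instead gives a direct Lie-algebraic argument: for $G\subsetneq F$ one shows $\liez_F\oplus\liek'_F\subsetneq\liez_G\oplus\liek'_G$, whence $\dim\mathcal{S}_C<\dim\SB$ via the dimension formula $\dim\SB=\dim K-\dim Z_F-\dim K'_F$. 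Either route works, but as written your justification is incomplete.
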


There is an obvious map $p:\SB \ra B$ which maps a point $x\in \SB$ to
the unique face $F $ such that $x\in \relint F$.  To study $\SB$ it is
expedient to fix an element $F \in B$. Thus $B=\{ g\cd F: g\in K\}
\cong K/H_F$ and
\begin{gather*}
  \SB = K\cd \relint F =\{g\cd x : g\in K, \, x\in \relint F \}.
\end{gather*}
$K \ra K/H_F$ is a right principal bundle with structure group $H_F$.
Let
\begin{gather*}
  \E = K\times^{H_F} \relint F
\end{gather*}
be the associated bundle gotten from the action of $H_F$ on $\Fo$.
Note that $\E \ra K/H_F$ is a homogeneous bundle in the sense that the
left action of $K$ on $K/H_F$ lifts to an action of $K$ on $\E$ that
is given by the following rule 
\begin{gather*}
  a \cd [ g, x] := [ag, x] \qquad a,g \in K, \ x\in \Fo
\end{gather*}
(Here $[g,x]$ is the point in the associated bundle.)

\begin{prop}
  \label{bundle}
  Let $B$ be a face type and let $F \in B$ be a representative.
  Define a map
  \begin{gather*}
    f : \E \ra \liek \qquad f( [g,x] ) = g\cd x.
  \end{gather*}
  Then $f$ is a smooth $K$-equivariant embedding of $\E$ into $\liek$
  with image $\SB$.  Therefore $\SB$ is a smooth embedded submanifold
  of $\liek$. Moreover $p: \SB \ra B$ is a smooth fibre bundle.
\end{prop}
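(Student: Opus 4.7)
The plan is to verify that $f$ is a well-defined, smooth, $K$-equivariant, injective immersion with image $\SB$, and then to upgrade this to an embedding; the fibre bundle claim for $p:\SB\ra B$ will then follow from the associated bundle structure of $\E \ra K/H_F$.

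Well-definedness is immediate, since $H_F$ preserves $F$ and hence $\relint F$, so the formula $[g,x]\mapsto g\cd x$ is independent of the choice of representative.  Smoothness, $K$-equivariance, and the equality $f(\E) = K\cd \relint F = \SB$ follow directly from the definition.  For injectivity, if $f[g_1,x_1]=f[g_2,x_2]$ and $g := g_2\meno g_1$, then $g\cd x_1 = x_2 \in \relint F \cap \relint (gF)$, and by the uniqueness of the open face containing a given point (Theorem \ref{schneider-facce}) this forces $gF = F$, i.e.\ $g\in H_F$, so $[g_1,x_1]=[g_2,x_2]$ in $\E$.

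The main step is the immersion property.  By $K$-equivariance it suffices to check it at $[e,x]$ for a single $x \in \relint F$; using Lemma \ref{facce-1} I may arrange $F$ to be invariant under a maximal torus $T\subset K$ and take $x\in \relint(F\cap \liet)\subset \relint F$.  Identifying $T_{[e,x]}\E$ with the quotient of $\liek \oplus V_F$ (where $V_F$ is the direction of $\aff F$) by the subspace $\{(-w,[w,x]):w\in \lieh_F\}$, the differential $df_{[e,x]}$ is induced by $(v,u)\mapsto [v,x]+u$, so its injectivity reduces to the identity $\{v\in \liek:[v,x]\in V_F\}=\lieh_F$.  The inclusion $\supset$ is clear because $H_F$ preserves $\aff F$.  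For the reverse inclusion I use the root decomposition: by Theorem \ref{tutte-esposte}(c) we have $H_F = Z_K(u_0)$ for some $u_0\in \liez_F$, so $\lieh_F = \liet \oplus \bigoplus_{\alpha(u_0)=0} Z_\alpha$.  Writing $x = x_0+x_1$ with $x_0\in \liez_F$ and $x_1 \in \liet \cap \liek_F$, the roots of $\liek_F'$ vanish on both $x_0$ (as $\liez_F$ is central in $\lieh_F$) and $x_1$ (as $\liek_F$ and $\liek_F'$ are commuting ideals), yielding $V_F\cap [\liek,x]=[\lieh_F,x]$.  Combined with the inclusion $\liek_x\subset \lieh_F$ (which holds because any element of $K_x$ preserves the unique open face containing $x$), this gives the claimed equality.

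Finally, $f$ is a homeomorphism onto its image: given $f[g_n,x_n]\ra f[g,x]$ in $\liek$, compactness of $K$ allows passing to a subsequence with $g_n\ra g_0$, so $x_n\ra g_0\meno g\cd x$; this limit lies in $F$ (as a limit in $\aff F$ of points of $\relint F\subset F$) and also in some open face $\relint F'$ with $F'\in B$ (Theorem \ref{schneider-facce}); since $F'\subset F$ and both have the same dimension (being in the same $K$-orbit), $F'=F$, so the limit lies in $\relint F$ and injectivity yields $[g_n,x_n]\ra [g,x]$ in $\E$.  Thus $f$ is a $K$-equivariant diffeomorphism onto $\SB$, making $\SB$ an embedded submanifold of $\liek$, and $p:\SB\ra B$ inherits the fibre bundle structure with fibre $\relint F$ from $\E \ra K/H_F$.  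The main obstacle is the root-theoretic computation underlying the immersion step.
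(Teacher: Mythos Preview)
Your argument is essentially correct, but there is one small gap in the immersion step, and your route there differs from the paper's.

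\medskip

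\textbf{The gap.} You write that ``by $K$-equivariance it suffices to check it at $[e,x]$ for a single $x \in \relint F$''. This is not quite true: $K$ does not act transitively on $\E$; the $K$-orbits correspond to the $H_F$-orbits in $\relint F$, so equivariance only reduces to checking at one $x$ per $H_F$-orbit. The fix is easy and uses nothing beyond what you already have: your root-space computation $V_F\cap[\liek,x]=[\lieh_F,x]$ and the inclusion $\liek_x\subset\lieh_F$ go through verbatim for \emph{every} $x\in\relint(F\cap\liet)$, and since $\relint F\subset\lieh_F$ and $T$ is maximal in $H_F$, every point of $\relint F$ is $H_F$-conjugate to a point of $\liet\cap\relint F=\relint(F\cap\liet)$. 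So replace ``a single $x$'' by ``each $x\in\relint(F\cap\liet)$'' and add one sentence of $H_F$-conjugacy. (The inclusion $\relint(F\cap\liet)\subset\relint F$ you assert without proof does hold, e.g.\ via Theorem~\ref{enunciatone}\ref{enunciatone-c}: a point of $\relint(F\cap\liet)$ lying on the relative boundary of $F$ would sit in $\relint G$ for some $T$-invariant proper face $G\subsetneq F$, forcing $G\cap\liet\subsetneq F\cap\liet$, a contradiction.)

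\medskip

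\textbf{Comparison with the paper.} For well-definedness, injectivity, and the homeomorphism-onto-image step your argument matches the paper's. For the immersion the paper argues differently and more directly: it splits $T_{[e,x]}\E$ into the fibre direction $T_pV$ and a ``horizontal'' piece $U$ parametrised by $\lieh_F^\perp$, and then observes that $df(T_pV)\subset\lieh_F$ (since $F\subset\lieh_F$) while $df(U)=[\lieh_F^\perp,x]\subset\lieh_F^\perp$ (since $x\in\lieh_F$ and $[\lieh_F,\lieh_F^\perp]\subset\lieh_F^\perp$). Injectivity on each piece uses only $\liek_x\subset\lieh_F$ and the fact that $f|_V$ is a diffeomorphism onto $\relint F$. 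This orthogonality argument is coordinate-free, needs no torus or root decomposition, and works uniformly for all $x\in\relint F$ without the extra reduction. Your quotient identification and the identity $\{v:[v,x]\in V_F\}=\lieh_F$ give the same conclusion by a more structural route, at the cost of invoking the root-space machinery and the decomposition $H_F=Z_F\cdot K_F\cdot K_F'$.
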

\begin{proof}
  It is straightforward to check that $f$ is well-defined, smooth and
  equivariant. It is also clear that $f(\E ) = \SB$.  We proceed by
  showing that $f$ is injective.  Recall from Theorem
  \ref{schneider-facce} that if $F_1$ and $F_2$ are different faces,
  then $\relint F_ 1 \cap \relint F_2 = \vacuo$.  If $f([g, x]) =
  f([g_1, x_1]) $ then $g_1\meno g \cd x= x_1$.  Since $ x_1 \in \Fo$
  and $ g_1 \meno g \cd x \in \relint (g_1\meno g \cd F )$ we get
  $g_1\meno g F =F$, so $[g, x] = [g_1, x_1]$ in $\E$. This shows that
  $f$ is injective.  Next we show that $f$ is an immersion.  Denote by
  $\V$ the fibre of $\E$ over the origin of $K/H_F$.  Since $\E$ is a
  homogeneous bundle and $f$ is equivariant, it is enough to show
  injectivity of $df_p$ at points $p \in \V$, i.e.  at points of the
  form $p=[e,x]$, $x\in \relint F$.  At such points
  \begin{gather*}
    T_p\, \E = T_p \V \oplus U
  \end{gather*}
  with
  \begin{gather*}
    U = \left \{ \desudtzero [\exp(tv) , x ]: v \in \lieh_F^\perp
    \right\}.
  \end{gather*}
  Indeed $T_p\V$ is the vertical space, while $U$ is the tangent space
  at $p$ of a local section of $K\ra K/H_F$.  The injectivity of
  $df_p$ will follow from the following three facts: a) $df_p
  \restr{\V}$ is injective; b) $df_p \restr{U}$ is injective; c) $df_p
  (\V) \cap df_p (U) = \{0\}$.  (a) follows from the fact that
  $f\restr{V}$ is a diffeomorphism of $V$ onto $\relint F$.  To prove
  (b) observe first that if $x\in \Fo$, then $\liek_x \subset
  \lieh_F$. Indeed if $g\in K_x$ then $ g\cd x = x \in \relint (g\cd
  F) \cap \Fo $, so $g\cd F =F$ by Theorem \ref {schneider-facce} and
  $g \in H_F$. Therefore $K_x \subset H_F$ and $\liek_x \subset
  \lieh_F$, as claimed.  Now let $u$ be an element of $U$. By
  definition there is $v \in \liek $ such that
  \begin{gather} \label{uv} u := \desudtzero [\exp(tv) , x ].
  \end{gather}
  Then
  \begin{gather*}
    df_p(u) = \desudtzero f \left ( \left [\exp(tv) , x \right ]
    \right ) = \desudtzero \exp(tv) \cd x = [v,x].
  \end{gather*}
  (The bracket on right is the Lie bracket in $\liek$!)  If $df_p(u)
  =0$, then $[v,x]=0$ and $v \in \liek_x \subset \lieh_F$. Since $v\in
  \lieh^\perp_F$, this means that $v=0$. Thus (b) is proved.  Now
  observe that $[\lieh_F, \lieh_F^\perp ] \subset \lieh_F^\perp$,
  since the adjoint action of $ H_F$ preserves $\lieh_F$ and
  $\lieh_F^\perp$.  If $v\in \lieh_F^\perp$ and $u\in U$ is given by
  \eqref{uv}, then $ df_p(u) = [v, x] \in \lieh_F^\perp $ since $x\in
  F \subset \lieh_F$.  So $df_p(U) \subset \lieh_F^\perp$.  On the
  other hand $ df_p (T_p\V) = T_{f(p)} (\relint F) \subset \lieh_F$.
  It follows that
  \begin{gather*}
    df_p (T_p\V) \cap df_p (U) \subset \lieh_F \cap \lieh_F^\perp =
    \{0\}.
  \end{gather*}
  Thus (c) is proved and $f$ is an immersion.  In order to prove that
  it is an embedding we shall prove that $f$ is proper as a map $f :
  \E \ra \SB=f(\E)$.  Let $\{y_n\}$ be a sequence in $\SB$ converging
  to some point $ y \in \SB$.  Set $[g_n,x_n]:= f\meno(y_n)$.  We wish
  to show that $\{[g_n,x_n] \}$ admits a convergent subsequence.
  Since $K$ is compact by extracting a subsequence we can assume that
  $g_n \to g$.  Then $y_n=f([g_n, x_n]) = g_n \cd x_n$. Therefore $x_n
  = g_n\meno \cd y_n \to x:=g\meno \cd y$. Since $y\in \SB$, $y \in
  \relint (g\meno F)$ and $x\in \relint F$. Therefore $[g_n, x_n] \to
  [g,x]$ as desired.
\end{proof}

\begin{lemma}
  If $B$ is the face type of $F$, then
  \begin{gather*}
    \dim \SB = \dim K - \dim K'_F - \dim Z_F.
  \end{gather*}
\end{lemma}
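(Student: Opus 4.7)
The plan is to read the dimension of $\SB$ off the bundle picture from Proposition \ref{bundle} and then insert the structural decomposition of $H_F$ from \eqref{eq:HF-dec}. By Proposition \ref{bundle} the map $f:\E\to\SB$ is a diffeomorphism, where $\E = K\times^{H_F}\relint F$ is a fibre bundle over $K/H_F$ with fibre $\relint F$. Hence
\begin{gather*}
  \dim \SB \;=\; \dim \E \;=\; \dim K - \dim H_F + \dim (\relint F).
\end{gather*}

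Next I would compute $\dim(\relint F) = \dim F$ (since $\relint F$ is open in the affine hull of $F$). By Theorem \ref{enunciatone}\,\ref{lemma-x0-x1} we have $\ext F = x_0 + K_F\cdot x_1$ with $K_F\cdot x_1 \subset \liek_F$ \emph{full}, so the affine hull of $F = \conv(\ext F)$ is the translate $x_0 + \liek_F$. Therefore $\dim F = \dim \liek_F = \dim K_F$.

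Finally, the decomposition \eqref{eq:HF-dec}, $H_F = Z_F\cdot K_F\cdot K_F'$, has factors with pairwise finite intersection (as already noted after \eqref{eq:H-dec}), so $\dim H_F = \dim Z_F + \dim K_F + \dim K_F'$. Substituting,
\begin{gather*}
  \dim \SB \;=\; \dim K - \bigl(\dim Z_F + \dim K_F + \dim K_F'\bigr) + \dim K_F \;=\; \dim K - \dim K_F' - \dim Z_F,
\end{gather*}
as claimed. No real obstacle arises: the only point requiring care is identifying the affine span of $F$ with $x_0 + \liek_F$, which is immediate from the fullness clause in Theorem \ref{enunciatone}\,\ref{lemma-x0-x1}.
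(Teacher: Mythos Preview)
Your proof is correct and follows essentially the same approach as the paper: use the bundle description $\SB\cong\E=K\times^{H_F}\relint F$ from Proposition~\ref{bundle}, the identification $\dim F=\dim\liek_F$ from the fullness statement in Theorem~\ref{enunciatone}\,\ref{lemma-x0-x1}, and the decomposition \eqref{eq:HF-dec}. The paper's own proof is a terse one-liner encoding exactly these steps; you have simply spelled them out.
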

\begin{proof}
  $\SB$ is a fibre bundle over $K/H_F$ with fibre $\relint F$. Since
  $\dim F = \dim \liek_F $ we get the result.
\end{proof}

We introduce a partial order on the face types, as follows: $B_1
\preceq B_2$ if for some (and hence for any) choice of representatives
$F_i \in B_i$ there is some $g\in K$ such that $g F_1 \subset
F_2$. This is a partial order. We write $B_1 \prec B_2$ if $B_1
\preceq B_2$ and $B_1 \neq B_2$.

\begin{proof}
  [Proof of Theorem \ref{stratification}] We already know that the
  strata are smooth embedded submanifold of $\liek$. In particular
  they are locally closed subsets both of $\liek$ and of $\c$.  By
  Prop. \ref{bundle} $ \SB = f(\E ) = f (K\times^{H_F} \relint F)$. So
  \begin{gather*}
    \overline{ \SB }= f (K\times^{H_F} F) = \bigcup_{F\in B} F.
  \end{gather*}
  Since any face $F$ is the disjoint union of all proper faces
  contained in $F$
  \begin{gather*}
    \overline{ \SB }= \bigcup_{F\in B} \relint F \sqcup \bigsqcup _{C
      \prec B} \bigcup_{G \in C} \relint G = \SB \sqcup
    \bigsqcup_{C\prec B} \mathcal{S}_C.
  \end{gather*}
  To conclude we need to show that $\dim \mathcal{S}_C < \dim \SB$ if
  $C \prec B$. Fix representatives $F \in B$ and $G\in C$ such that $G
  \subsetneq F$.  By the previous lemma it is enough to show that
  $\dim Z_F + \dim K'_F < \dim Z_G + \dim K'_G$.
  In fact $Z_F \cd K'_F$ fixes $G$ pointwise since $G \subset
  F$. Therefore $Z_F \cd K'_F \subset H_G$. On the other hand if $x
  \in G $, then $\aff (G) = x + \liek_G \subset \aff (F) = x +
  \liek_F$. Hence $K_G \subset K_F$.  It follows that $[\liez_F \oplus
  \liek'_F , \liek_G ] =0 $. Since $\liek_G$ is semisimple, this shows
  that $\liez_F \oplus \liek'_F \perp \liek_G$. But $\liez_F \oplus
  \liek'_F \subset \lieh_G$, so in fact $\liez_F \oplus \liek'_F
  \subset \liez_G \oplus \liek'_G$.  This proves the inequality $ \dim
  Z_F + \dim K'_F \leq \dim Z_G + \dim K'_G$. In the case of equality,
  we would get $ Z_F \cd K'_F = Z_G\cd K'_G$, so $Z_F =Z_G$, $H_F
  =H_G$ and hence $\ext F= \ext G$ and $F=G$.
\end{proof}
\begin{Example}
  We shall describe the strata of the orbitope $\cp$ studied in
  Example \ref{exe1}. We saw there that the faces of $ \cp$ are in
  one-to-one correspondence with subspaces of $\C^{n+1}$. Two
  subspaces are interchanged by an element of $\SU(n+1)$ if and only
  if they have the same dimension. So the orbit types are indexed by
  the dimension.  Let $W \subset \C^{n+1}$ be a subspace of dimension
  $k$, let $F = \conv (\PP(W))$ be the corresponding face and let $B$
  be the orbit type of $F$. Then
  \begin{gather*}
    B \cong K / H_F = \SU(n+1) / \operatorname{S}(\operatorname{U}(W)
    \times \operatorname{U}(W^\perp) ).
  \end{gather*}
  Therefore $B$ is simply the Grassmannian $\grass(k, n+1)$. Since $\relint
  F = \{A\in F: \operatorname{rank} A = k\}$, it follows that
  \begin{gather*}
    \SB = \{ A\in \mathcal{H}_1 : A \geq 0,\ \operatorname{rank} A=k\}.
  \end{gather*}
  In fact this is a bundle over the Grassmannian of $k$-planes.
  Finally, notice that $H_F$ acts on $\relint F$ simply by the adjoint
  action of $\SU( W)$.
\end{Example}

\section{Satake combinatorics of the faces}
\label{satake-section}

In this section we describe the faces of $\c$ and the faces of the
momentum polytope in terms of root data.  The description uses the
notion of $x$-connected subset of simple roots, which was introduced
in \cite{satake-compactifications}. In that paper Satake introduced
certain compactifications of a symmetric space of noncompact type (the
Satake-Furstenberg compactifications). The notion of $x$-connected
subset was used in the study of the boundary components of these
compactifications.  It is no coincidence that faces of $\c$ and
boundary components admit a description in terms of the same
combinatorial data: in fact it was shown in \cite{biliotti-ghigi-2}
that the Satake compactifications of the symmetric space $K^\C /K$ are
homeomorphic to convex hulls of integral coadjoint orbit of $K$.  Here
we do not use the link with the compactifications. Instead we show
directly how to construct all the faces of $\c$ (up to conjugation)
starting from the root data. This is accomplished for a general
coadjoint orbit with no integrality assumption.

Fix a maximal torus $T$ of $K$ and a system of simple roots $\Pi
\subset \roots= \roots (\liek^\C, \liet^\C)$.  As usual we identify
$\liek^\C$ with its dual using the Killing form $B$.  The roots get
identified with elements of $i\liet$.
\begin{defin}
  A subset $E\subset i \liet$ is \emph{connected} if there is no pair
  of disjoint subsets $D,C\subset E$ such that $D\sqcup C =E$, and
  $\sx x,y \xs=0$ for any $x \in D$ and for any $y \in C$.
\end{defin}
(A thorough discussion of connected subsets can be found in \cite[\S
5]{moore-compactifications}.)  Connected components are defined as
usual. For example the connected components of $\simple$ are the
subsets corresponding to the simple roots of the simple ideals in
$\liek$.

\begin{defin}
  If $x $ is a nonzero vector of $\liet$, a subset $I \subset\simple$
  is called $x$-\enf{connected} if $I\cup\{i x\}$ is connected.
\end{defin}
Equivalently $I \subset \simple$ is $x$-connected if and only if every
connected component of $I$ contains at least one root $\alfa$ such
that $\alfa (x) \neq 0$. \changed{By definition the empty set is
  $x$-connected.}

\begin{defin}
  If $I\subset \simple$ is $x$-connected, denote by $I'$ the
  collection of all simple roots orthogonal to $\{i x\}\cup I$.  The
  set $J:=I\cup I'$ is called the $x$-\enf{saturation} of $I$.
\end{defin}
The largest $x$-connected subset contained in $J$ is $I$. So $J$ is
determined by $I$ and $I$ is determined by $J$.  Given a subset
$E\subset \simple$ we will use the following notation:
\begin{gather*}
  \liet_E :=  \liet \cap \bigcap_{\alfa \in E} \ker \alfa \\
  \root_E = \root \cap \spam_\R (E) \qquad \root_{E,+} =\root_E \cap
  \root_+
  \\
  \liet^E = \sum_{\alfa \in E } \R i H_\alfa = \text { orthogonal
    complement of }
  \liet_E \text { in } \liet\\
  \lieh_E : = \liet \oplus \bigoplus_{\alfa \in \root_{E,+}} Z_\alfa
  \quad \liek_E : = \liet^E \oplus \bigoplus_{\alfa \in \root_{E,+}}
  Z_\alfa.
\end{gather*}
We denote by $T_E$, $H_E$, $K_E$ the corresponding connected
subgroups.  Note that $H_E$ is the subgroup associated to the subset
$E \subset \simple$, while $H_F$ is the subset associated to the face
$F \subset \c$. This should cause no confusion.

\begin{lemma}
  \label{parabolico-1}
  Let $\OO$ be a full coadjoint orbit and let $F \subset \c$ be a
  proper face. Assume that $u\in \CF$ and that $v\in \CF\cap
  \liez_F$. Let $\alfa \in \root$.
  \begin{enumerate}
  \item If $\alfa(u) = 0$, then $\alfa(v) =0$.
  \item If $-i\alfa(u) > 0$, then $-i\alfa(v) \geq 0$;
  \end{enumerate}
\end{lemma}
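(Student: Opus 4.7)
The plan is to deduce both statements from the single observation that $Z_K(u')\subset H_F$ for every $u'\in\CF$. To verify this, note that $F=F_{u'}(\c)=\{y\in\c:\sx y,u'\xs=h_\c(u')\}$, so any $g\in K$ with $g\cd u'=u'$ preserves $F$ and hence belongs to $H_F$ by definition \eqref{def-hf}. Since $v\in\CF\cap\liez_F$, Theorem~\ref{tutte-esposte}(c) identifies $H_F$ with $Z_K(v)$, so on the Lie-algebra level
\[
\liez_\liek(u')\ \subset\ \lieh_F\ =\ \liez_\liek(v)\qquad\text{for every } u'\in\CF.
\]
Because $\CF$ is convex and $T$-invariant (for a maximal torus $T\subset H_F$ as provided by Corollary~\ref{torus-preserves}), I may replace $u$ by its $T$-average and thereby assume $u\in\liet$; the extension of $\alpha$ to $\liek$ through the Killing form factors through $\pi_\liet$ (since $H_\alpha\in\liet^\C$ is $B$-orthogonal to every root space), so this reduction preserves the value of $\alpha(u)$.

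For part (a), if $\alpha(u)=0$ then Lemma~\ref{knappone} yields $Z_\alpha\subset\liez_\liek(u)$. Combined with the preliminary containment this gives $Z_\alpha\subset\liez_\liek(v)$, and a second appeal to Lemma~\ref{knappone}, now with $v\in\liet$ in place of $y$, forces $\alpha(v)=0$.

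For part (b), I argue by contradiction. Suppose $-i\alpha(u)>0$ but $-i\alpha(v)<0$, and consider the ray $v_t:=v+tu$ for $t\ge 0$. Being a positive linear combination of two elements of the convex cone $\CF$ (Proposition~\ref{u-cono}), every $v_t$ lies in $\CF$, and $v_t\in\liet$ by the reduction. The affine function $t\mapsto -i\alpha(v_t)=-i\alpha(v)+t\,(-i\alpha(u))$ has strictly positive slope and is negative at $t=0$, so it has a unique zero $t^*>0$. Then $v_{t^*}\in\CF\cap\liet$ with $\alpha(v_{t^*})=0$, hence Lemma~\ref{knappone} gives $Z_\alpha\subset\liez_\liek(v_{t^*})$, and the preliminary inclusion yields $Z_\alpha\subset\liez_\liek(v)$. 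A final use of Lemma~\ref{knappone} produces $\alpha(v)=0$, contradicting $-i\alpha(v)<0$.

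The only step of genuine content is the preliminary inclusion $Z_K(u')\subset H_F$ for $u'\in\CF$, which is a short extraction from Proposition~\ref{u-cono} combined with the characterization of $H_F$ in Theorem~\ref{tutte-esposte}(c). Once that is in place, both parts amount to a straightforward continuity argument on the convex cone $\CF$ together with the bracket identities of Lemma~\ref{knappone}. I do not foresee a serious obstacle beyond keeping track of the mild reduction $u\in\liet$.
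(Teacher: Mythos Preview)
Your argument is essentially the paper's own: both rest on the inclusion $Z_K(u')\subset H_F=Z_K(v)$ for every $u'\in\CF$, deduce (a) via $Z_\alpha\subset\liez_\liek(u)\subset\liez_\liek(v)$, and obtain (b) from an intermediate-value step along a path in the convex cone $\CF$ combined with (a). Your use of the ray $v+tu$ in place of the paper's segment $(1-t)u+tv$, and your explicit reduction to $u\in\liet$ (which the paper takes for granted), are inessential variations.
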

\begin{proof}
  (a) $Z_K(u) \subset H_F$, since $F = F_u(\c)$, and $H_F =Z_K(v)$ by
  Theorem \ref{tutte-esposte}. If $\alfa(u) =0$, then $Z_\alfa \subset
  \liez_\liek(u) \subset \lieh_F = \liez_\liek(v)$, hence
  $\alfa(v)=0$. (b) Assume by contradiction that $-i\alfa(v) <0$. Set
  $u_t = (1-t) u + t v$. By Proposition \ref{u-cono} $\CF$ is convex,
  so $u_t \in \CF $ for any $t\in [0,1]$.  Since $-i\alfa(u_0) >0$ and
  $-i\alfa(u_1) <0$, there is some $s\in (0,1)$ such that
  $\alfa(u_s)=0$. Since $u_s \in \CF$ and $\alfa(v) \neq 0$, this
  would contradict (a).
\end{proof}
Denote by $\chamber$ the positive Weyl chamber associated to
$\simple$.  The following is immediate and well-known.
\begin{lemma}
  \label{parabolico-2}
  If $v\in \cchamber$, then $\liez_\liek(v) = \lieh_E$ with
  $E=\{\alfa\in \simple: \alfa(v) = 0 \}$.
\end{lemma}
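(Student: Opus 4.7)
The plan is to unwind the definitions using the root space decomposition and then use the Weyl chamber condition to identify the roots that vanish on $v$.

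First I would write $\liek = \liet \oplus \bigoplus_{\alpha \in \root_+} Z_\alpha$ and compute $\liez_\liek(v)$ directly. Since $v \in \liet$ and $\liet$ is abelian, $\liet \subset \liez_\liek(v)$. From the bracket formulas in Lemma~\ref{knappone}, namely $[v, u_\alpha] = -i\alpha(v) v_\alpha$ and $[v, v_\alpha] = i\alpha(v) u_\alpha$, the plane $Z_\alpha$ lies in $\liez_\liek(v)$ if and only if $\alpha(v) = 0$. Hence
\begin{gather*}
\liez_\liek(v) = \liet \oplus \bigoplus_{\substack{\alpha \in \root_+ \\ \alpha(v) = 0}} Z_\alpha.
\end{gather*}
Comparing with the definition of $\lieh_E$, the statement reduces to the set-theoretic identity $\{\alpha \in \root_+ : \alpha(v) = 0\} = \root_{E,+}$.

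The inclusion $\root_{E,+} \subset \{\alpha \in \root_+ : \alpha(v) = 0\}$ is immediate, since any $\alpha \in \root_{E,+}$ is a real linear combination of simple roots from $E$, each of which vanishes on $v$ by the definition of $E$. The harder inclusion is the converse, and this is where the Weyl chamber hypothesis enters. Given $\alpha \in \root_+$ with $\alpha(v) = 0$, I would expand $\alpha = \sum_{\beta \in \simple} n_\beta \beta$ with nonnegative integer coefficients $n_\beta$. Since $v \in \cchamber$, the defining inequalities of the chamber give $-i\beta(v) \geq 0$ for every $\beta \in \simple$. The relation
\begin{gather*}
0 = -i\alpha(v) = \sum_{\beta \in \simple} n_\beta \,(-i\beta(v))
\end{gather*}
is then a sum of nonnegative terms equal to zero, so each term vanishes. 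Consequently $n_\beta > 0$ forces $\beta(v) = 0$, i.e.\ $\beta \in E$, which shows $\alpha \in \spam_\R(E) \cap \root = \root_E$, and hence $\alpha \in \root_{E,+}$.

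The main (and only) obstacle is the second inclusion, and it is handled cleanly by the standard ``sum of nonnegatives vanishing'' argument, which is exactly the role played by the hypothesis $v \in \cchamber$. No deeper machinery is required; everything else is bookkeeping with the root space decomposition.
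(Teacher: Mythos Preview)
Your argument is correct and is precisely the standard verification the authors have in mind; in the paper the lemma is stated as ``immediate and well-known'' with no proof given, so there is nothing further to compare.
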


\begin{teo}
  \label{satakone}
  Let $\OO$ be a full coadjoint orbit and let $x $ be the unique point
  in $\OO \cap \cchamber$.
  \begin{enumerate}
  \item If $I \subset \simple$ is $x$-connected and $J$ is its
    $x$-saturation, then
    \begin{gather*}
      F:= \conv ( H_J \cd x)
    \end{gather*}
    is a face of $\c$. If $u \in \liet_J$ and $-i\alfa(u) >0 $ for any
    $\alfa \in \simple \setminus J$, then $F=F_u (\c)$. Moreover
    \begin{gather}
      \label{HFHJ}
      H_F = H_J \qquad Z_F = T_J \qquad K_F =K_I \qquad K'_F = K_{I'}.
    \end{gather}
  \item Given an arbitrary subset $E \subset \simple$, denote by $I$
    the largest $x$-connected subset contained in $E$ and by $J$ the
    $x$-saturation of $I$.  Then $H_E\cd x = H_I \cd x = H_J \cd x$.
  \item Any face of $\c$ is conjugate to one of the faces constructed
    in (a). More precisely, given a face $F$ and a maximal torus $T
    \subset H_F$ there are a base $\simple \subset \root (\liek^\C,
    \liet^\C)$ and a subset $I\subset \simple$ with the following
    properties: (i) if $\chamber$ is the positive Weyl chamber
    corresponding to $\simple$, then $\cchamber\cap \ext F \neq
    \vacuo$; (ii) if $x$ is the unique point in $\cchamber \cap \ext
    F$, then $I$ is $x$-connected and $F= \conv (H_J\cd x)$, where $J$
    is the $x$-saturation of $I$.
  \end{enumerate}
\end{teo}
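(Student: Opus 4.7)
My plan for this theorem is to handle the three parts in order, since they build on each other: (a) constructs faces from root data, (b) shows that different subsets of simple roots can produce the same face, and (c) establishes that every face arises as in (a).

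For part (a), I will first fix any $u \in \liet_J$ with $-i\alfa(u) > 0$ for all $\alfa \in \simple \setminus J$; such a $u$ exists because the conditions define a nonempty open cone in $\liet_J$. Then $u \in \cchamber$ and $\{\alfa \in \simple : \alfa(u) = 0\} = J$, so Lemma \ref{parabolico-2} will give $Z_\liek(u) = \lieh_J$. Since $u, x \in \cchamber$, Lemma \ref{hesso-2} shows $x$ is a maximum of $\mom_u$; then Lemma \ref{hesso} yields $\ml(\mom_u) = Z_K(u) \cdot x = H_J \cdot x$, so $F_u(\c) = \conv(H_J \cdot x) = F$, proving $F$ is a face equal to $F_u(\c)$. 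For the identities \eqref{HFHJ}, I will observe that the center of $\lieh_J$ is $\liet_J$, giving $Z_F = T_J$, and that the orthogonal decomposition $J = I \sqcup I'$ splits the semisimple part of $\lieh_J$ as $\liek_I \oplus \liek_{I'}$. By $x$-connectedness every simple ideal of $\liek_I$ contains a root nonvanishing on $x$, while all roots in $\root_{I'}$ vanish on $x$, so $K_{I'}$ pointwise fixes $x$ and, being normal in $H_J$, acts trivially on the orbit $H_J \cdot x$. Lemma \ref{inclusioni} applied with $H = H_J$ will then force $K_F = K_I$, whence $K'_F = K_{I'}$ and $H_F = H_J$.

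Part (b) follows from the same observations: both $E \setminus I$ and $I'$ are unions of connected components (of $E$ and of $J$ respectively) whose simple roots all vanish on $x$, so every root in $\root_{E \setminus I}$ and in $\root_{I'}$ vanishes on $x$. Hence $K_{E \setminus I}$ and $K_{I'}$ pointwise fix $x$, and since they are normal in $H_E$ and $H_J$ respectively, they act trivially on $H_E \cdot x$ and $H_J \cdot x$, giving $H_E \cdot x = H_I \cdot x = H_J \cdot x$.

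For part (c), I will start from Theorem \ref{tutte-esposte}, which provides a vector $u \in \CF \cap \liez_F \subset \liet$. Next I will choose $x \in \ext F \cap \liet$, which is nonempty because $\ext F$ is a compact symplectic submanifold of $\OO$ (Corollary \ref{F-simplettica}) on which $T$ acts in a Hamiltonian way. Since $x \in \ext F = \ml(\mom_u)$ is a maximum of $\mom_u$, Lemma \ref{hesso-2} provides a Weyl chamber whose closure contains both $x$ and $u$, and I will take $\simple$ to be the corresponding base. Setting $E := \{\alfa \in \simple : \alfa(u) = 0\}$, Lemma \ref{parabolico-2} will give $H_F = Z_K(u) = H_E$. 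Letting $I$ be the largest $x$-connected subset of $E$ and $J$ its $x$-saturation, part (b) will yield $\ext F = H_F \cdot x = H_E \cdot x = H_J \cdot x$, hence $F = \conv(H_J \cdot x)$. The uniqueness of $x$ in $\cchamber \cap \ext F$ comes from $\cchamber$ being a fundamental domain for $W$. The main obstacle is precisely this compatibility step: one must pick a single base $\simple$ putting both the central vector $u$ and the point $x$ in the closed positive chamber simultaneously, and Lemma \ref{hesso-2} is exactly the tool that guarantees this is possible.
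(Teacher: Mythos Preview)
Your arguments for (b) and (c) are correct and essentially coincide with the paper's (your proof of (b) is in fact slightly more direct, since the paper derives (b) from the identities \eqref{HFHJ} in (a), while you argue directly that the normal subgroups $K_{E\setminus I}$ and $K_{I'}$ fix $x$ pointwise).

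There is, however, a genuine gap in your proof of (a), at the sentence ``Lemma~\ref{inclusioni} applied with $H=H_J$ will then force $K_F=K_I$, whence $K'_F=K_{I'}$ and $H_F=H_J$.'' Lemma~\ref{inclusioni} gives only $K_F\subset H_J\subset H_F$ together with $K_F=L$, and your analysis of the simple factors of $H_J$ correctly identifies $L=K_I$; but from $K_F=K_I$ and $H_J\subset H_F$ alone one cannot conclude $H_F=H_J$. (Your earlier remark ``the center of $\lieh_J$ is $\liet_J$, giving $Z_F=T_J$'' already presupposes $H_F=H_J$, so it cannot be used here.) A priori $H_F$ could be a strictly larger centralizer whose extra simple factors all act trivially on $\ext F$ and commute with $K_I$; nothing you have said rules this out.

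The paper closes this gap with Lemma~\ref{parabolico-1}, which is tailor-made for the purpose. One picks $v\in C_F\cap\liez_F$ (Theorem~\ref{tutte-esposte}); since the chosen $u$ lies in $C_F$, Lemma~\ref{parabolico-1} forces $-i\alpha(v)\geq 0$ for every $\alpha\in\root_+$, so $v\in\cchamber$. Then Lemma~\ref{parabolico-2} gives $\lieh_F=\liez_\liek(v)=\lieh_E$ for the subset $E=\{\alpha\in\simple:\alpha(v)=0\}\supset J$. Now any $\alpha\in E\setminus I$ is simple, has $Z_\alpha\perp\liek_I=\liek_F$, hence $Z_\alpha\subset\liek'_F$; this forces $\alpha(x)=0$ and $\alpha\perp I$, i.e.\ $\alpha\in I'$. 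Thus $E=J$ and $H_F=H_J$, from which the remaining identities $Z_F=T_J$, $K'_F=K_{I'}$ follow immediately. The key point you are missing is that one first needs to know $H_F=H_E$ for some subset $E$ of \emph{simple} roots before the orthogonality argument with $K_F=K_I$ can pin $E$ down; and it is Lemma~\ref{parabolico-1} that puts the defining vector for $H_F$ into $\cchamber$ so that Lemma~\ref{parabolico-2} applies.
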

\begin{proof}
  (a) Since the set $\{\alfa \restr{\liet_J} : \alfa \in \simple
  \setminus J \}$ is a basis of $\liet_J^*$, we can pick $u\in
  \liet_J$ such that $\alfa (u) >0$ for any $\alfa \in \simple
  \setminus J$. Then $Z_K(u) =H_J$.  Set $F:=F_u (\c)$. We claim that
  $x\in F$. Indeed $x$ and $u$ belong to $\cchamber$, so by Lemma
  \ref{hesso-2} $x$ is a maximum point of $\mom_u$, i.e. $x\in \ext
  F$.  By Lemma \ref{hesso} $\ext F = Z_K(u) \cd x$, so $F = \conv
  (H_J\cd x)$.  This proves that $\conv (H_J\cd x)$ is indeed a face
  of $\c$.  By Lemma \ref{inclusioni} $K_F \subset H_J=Z_K(u) \subset
  H_F$ and $K_F= K_I$.  Pick $v\in \CF \cap \liez_F$ (this exists by
  Theorem \ref{tutte-esposte}). By Lemma \ref{parabolico-1}
  $-i\alfa(v) \geq 0$ for every $\alfa \in \roots_+$, i.e.  $v\in
  \cchamber$. By Theorem \ref{tutte-esposte} (c) and Lemma
  \ref{parabolico-2} $ \lieh_F = \liez_\liek(v) = \lieh_E $, where
  $E=\{\alfa \in \simple: \alfa(v) =0 \}$. We claim that $E=J$. Indeed
  $\lieh_J \subset \lieh_F = \lieh_E$, so $J \subset E$. If we write
  $E = I \sqcup E'$, then $I' \subset E'$. Conversely, if $\alfa \in
  E'$, then $Z_\alfa \perp \liek_I = \liek_F$ (simply because the root
  space decomposition is orthogonal), so $Z_\alfa \subset
  \liek'_F$. This entails on the one hand that $[Z_\alfa,\liek_I]=0$,
  i.e. $\alfa \perp I$; on the other hand that $Z_\alfa$ fixes $x$,
  i.e. $\alfa(x)=0$. This means in fact that
  $\alfa \in I'$.  Hence $E=J$ as claimed and \eqref{HFHJ} follow. \\
  (b) Split $E$ in connected components: $E = E_1 \sqcup \cds \sqcup
  E_r$. We can assume that $E_j$ is $x$-connected iff $j \leq q$ for
  some $q$ between $1$ and $ r$. Then $I =E_1 \sqcup \cds \sqcup E_q$.
  Set $E' := E \setminus I = \sqcup_{j > q} E_J$. Then clearly $E'
  \subset I'$. So $E \subset J$.  Let $F=\conv( H_j\cd x)$ be the face
  constructed from $J$ as in (a). Then $H_F=H_J$ and $K_F =
  K_I$. Since $ I\subset E \subset J$, $K_I \subset H_E \subset
  H_J$. But $K_I \cd x = K_F
  \cd x = H_F\cd x =H_J\cd x$, so $H_E\cd x = H_J \cd x$ as desired.\\
  (c) If $F=\c$, then $F=\conv(H_J)$ with $I=J=\simple$.  Otherwise $F
  $ is a proper face. Fix a point $x\in \ext F \cap \liet$.  By
  Theorem \ref{tutte-esposte} (b) there is a vector $u\in \liez_F$
  such that $F=F_u(\c)$. Then $\ext F = \ml(\mom_u)$, so there is a
  Weyl chamber $\chamber$ such that $x, u \in \cchamber$. Let $\simple
  $ be the base corresponding to $\chamber$.  By Theorem
  \ref{tutte-esposte} (c) $H_F =Z_K(u)$. Since $u\in \cchamber$, Lemma
  \ref {parabolico-2} says that $H_F =H_E$ with $E=\{\alfa \in
  \simple: \alfa(u) =0\}$. Let $I$ and $J$ be as in (b). Then $I$ is
  $x$-connected and using (b) we get $\ext F = H_F\cd x =H_E\cd x =
  H_J\cd x $. Thus $F=\conv (H_J\cd x)$ as desired.
\end{proof}

\begin{remark}
  In the proof of (c) we have in fact that $E=J$. Indeed
  from (a) $H_F=H_J$, so $H_E=H_J$ i.e. $E=J$.
\end{remark}
\begin{Example}
  Let $K=\SU (n+1)$, $n\geq 4$, and let $x \in \mathfrak{su}(n+1)$ be
  the diagonal matrix $ x = \diag (i (n-1),i(n-1),-2i,\ldots,-2i)$.
  The coadjoint orbit through $x$ is the momentum image of the
  Grassmannian $\grass(2, n+1)$.  Let $\liet$ be the set of the
  diagonal matrices and denote by $\Pi=\{ \alpha_1,\ldots, \alpha_n
  \}$ the standard set of simple roots, i.e.  $\alpha_i (\diag (x_1,
  \lds, x_{n+1}))=x_i - x_{i+1}$.  The vector $x$ lies in the closure
  of the positive Weyl chamber containg $x$ and $\alpha_i( x) \neq 0$
  if and only if $i =2$.  Therefore the $x$-connected subsets of $\Pi$
  are the following:
  \begin{enumerate}
  \item $I^1_k = \{ \alpha_1, \alpha_2, \ldots, \alpha_k \}$, $2\leq k
    \leq n$;
  \item $I^2_k =\{ \alpha_2, \ldots, \alpha_k \}$, $2\leq k \leq n$;
  \item \changed{the empty set}.
  \end{enumerate}

  \changed{For $I=\vacuo$, $\Delta_I = \vacuo$, $H_I = T$ and the
    $x$-saturation $J$ of $I$ consists of the simple roots that are
    orthogonal to $ix$. Therefore $H_J = Z_K(x)$ and $H_J\cd x =
    \{x\}$. The corresponding face is the vertex $F=\{x\}$.}

  For $i=1,2$ let $J^i_k$ be the $x$-saturation of $I^i_k$ and set
  $F^1_k = \mathrm{conv}(H_{J^1_k } \cdot x)$.  It is easy to check
  that $J^1_k= I^1_k \bigcup \{ \alpha_{k+2} , \ldots , \alpha_n \}$.
  $K_{I^1_k} $ is the image of the embedding
  \[
  \SU(k+1) \hookrightarrow \SU(n+1) \qquad A \mapsto
  \left(\begin{array}{cc} A & 0 \\ 0 & \mathrm{Id}
      \\ \end{array}\right)
  \]
  and $H_{J^1_k }= \mathrm{S}(\mathrm{U}(k+1)\times \mathrm{U}(n-k))$.
  Hence \[ \ext F^1_k=K_{I^1_k} \cdot x= \SU(k+1)/
  \mathrm{S}(\mathrm{U}(2) \times \mathrm{U}(k-1)),
  \]
  is the complex Grassmannian $\grass(2,k+1)$.  The stratum
  corresponding to $F^1_k$ is a fibre bundle over $\SU(n+1)/
  \mathrm{S}(\mathrm{U}(k+1)\times \mathrm{U}(n-k)) = \grass(k+1,
  n+1)$.

  The $x$-saturation of $I^2_k$ is $J^2_k= I^2_k \bigcup \{
  \alpha_{k+2} , \ldots , \alpha_n \}$.  $K_{I^2_k}$ is the image of
  the embedding
  \begin{gather*}
    \SU(k) \hookrightarrow \SU(n+1) \qquad A \mapsto
    \left(\begin{array}{ccc} 1 & 0 & 0 \\ 0 & A & 0 \\ 0 & 0 &
        \mathrm{Id} \\ \end{array}\right).
  \end{gather*}
  $H_{J^2_k }= \mathrm{S}(\mathrm{U}(1) \times \mathrm{U}(k)\times
  \mathrm{U}(n-k))$ and
  \[
  \ext F^2_k=K_{I^2_k} \cdot x= \SU(k)/ \mathrm{S}(\mathrm{U}(1)
  \times \mathrm{U}(k-1))
  \]
  is a complex projective space $\PP^{k-1}(\C)$. The strata
  corresponding to $F^2_k$ is a fibre bundle over the flag manifold
  $\SU(n+1)/ \mathrm{S}(\mathrm{U}(1) \times \mathrm{U}(k)\times
  \mathrm{U}(n-k))$.
\end{Example}
\section{Complex geometry of the faces }
\label{complex-structure}

In the previous sections we have described the faces of $\c$ in terms
of their extreme sets $\ext F$ and have caracterized the submanifolds
$\ext F \subset \OO$ in various ways. Here we wish to prove Theorem
\ref{main-2}, which amounts to the equivalence between (a) and (b) in
Theorem \ref{parabolique} below. This will add another
characterization in terms of the complex structure of $\OO$.

\begin{teo}
  \label{parabolique}
  Let $\OO ' \subset \OO$ be a submanifold.  The following conditions
  are equivalent.
  \begin{enumerate}
  \item $\OO'$ is a compact orbit of a parabolic subgroup of $G$.
  \item There is a face $F$ of $\c$ such that $\OO' = \ext F$.
  \item $\OO'$ is compact and the subgroup
    \begin{gather}
      \label {def-P}
      P:=\{g\in G: g \cd \OO' = \OO'\}
    \end{gather}
    is a parabolic subgroup of $G$ that acts transitively on $\OO'$;
  \item There are a maximal torus $T\subset K$, a Weyl chamber
    $\chamber \subset \liet$ and a subset $E$ of the corresponding set
    of simple roots $\Pi$ such that $\OO'\cap \cchamber \neq \vacuo$
    and $\OO'$ is an orbit of $H_E$.
  \end{enumerate}
\end{teo}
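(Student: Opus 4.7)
The plan is to establish equivalence by the cyclic chain (b) $\Rightarrow$ (d) $\Rightarrow$ (a) $\Rightarrow$ (c) $\Rightarrow$ (b). The implication (b) $\Rightarrow$ (d) is essentially Theorem~\ref{satakone}(c): given a face $F$ with $\ext F = \OO'$, that theorem furnishes a maximal torus $T \subset H_F$, a base $\simple$, a point $x \in \cchamber \cap \ext F$, and an $x$-connected subset $I \subset \simple$ whose $x$-saturation $J$ satisfies $\ext F = H_J\cdot x$; taking $E := J$ delivers (d). The implication (a) $\Rightarrow$ (c) is also formal: if $\OO' = Q \cdot y$ for some parabolic $Q$, then $Q$ is contained in the subgroup $P$ of~\eqref{def-P}, and since every closed subgroup of $G$ containing a parabolic subgroup is itself parabolic, $P$ is parabolic; transitivity is inherited from $Q$.

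The heart of the argument is (d) $\Rightarrow$ (a). I would let $P_E \subset G$ denote the standard parabolic subgroup containing the negative Borel $B_-$ and having Levi factor $H_E^\C$. By Lemma~\ref{Borel}, $B_- \subset G_x$, so $P_E\cdot x = H_E^\C \cdot x$. The crucial point is the identification $H_E^\C\cdot x = H_E\cdot x$. To prove this, observe that $T_x(H_E\cdot x)$ is a sum of root spaces $Z_\alfa$, each of which is $J$-invariant by the description of the complex structure in \S\ref{pre-orbits}; hence $H_E\cdot x$ is a complex submanifold. Since $(iv)_\OO = J v_\OO$ for $v \in \lieh_E$, we obtain $T_x(H_E^\C\cdot x) = T_x(H_E\cdot x) + J T_x(H_E\cdot x) = T_x(H_E\cdot x)$, so the compact orbit $H_E\cdot x$ is both open and closed in the connected orbit $H_E^\C\cdot x$, forcing equality. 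Thus $\OO' = P_E\cdot x$ is a compact orbit of the parabolic $P_E$.

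For (c) $\Rightarrow$ (b), pick a Borel subgroup $B \subset P$; since $\OO'$ is a compact complex submanifold on which $B$ acts holomorphically, the Borel fixed point theorem yields some $x \in \OO'$ with $B \cdot x = x$. After conjugating by an element of $K$ we may assume $B = B_-$, so by Lemma~\ref{Borel} $x \in \cchamber \cap \OO$ and $P$ becomes a standard parabolic $P_E$; repeating the argument of (d) $\Rightarrow$ (a) then gives $\OO' = P_E\cdot x = H_E \cdot x$, and Theorem~\ref{satakone}(a)--(b) produces a face $F = \conv(H_J\cdot x)$ with $\ext F = \OO'$, where $J$ is the $x$-saturation of the largest $x$-connected subset of $E$. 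The principal obstacle is precisely the identification $H_E\cdot x = H_E^\C\cdot x$ used in (d) $\Rightarrow$ (a) and reused in (c) $\Rightarrow$ (b): this is the step where the \keler\ geometry of $\OO$ enters essentially, whereas the remaining implications are of a more formal or combinatorial character.
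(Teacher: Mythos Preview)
Your proof is correct and follows essentially the same line as the paper's, which proves the equivalence via (d)\,$\Leftrightarrow$\,(b) (Theorem~\ref{satakone}), (a)\,$\Rightarrow$\,(c), (c)\,$\Rightarrow$\,(d), and (d)\,$\Rightarrow$\,(a). The only noteworthy difference is in the key step $H_E\cdot x = H_E^\C\cdot x$: you verify this by a direct tangent-space computation with root spaces, while the paper observes that $H_E\cdot x$ is a connected component of the fixed-point set of the torus $T_E$ (hence a complex submanifold preserved by $H_E^\C$), and in (c)\,$\Rightarrow$\,(d) simply invokes the general principle that a compact orbit of a complex reductive group is already an orbit of its compact real form.
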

\begin{proof}
  That (d) is equivalent to (b) is the content of Theorem
  \ref{satakone}.
  \\
  \noindent
  (a) $\Rightarrow $ (c) Since $\OO'$ is an orbit of some parabolic
  subgroup $Q$, the subgroup $P$ contains $Q$ so it is parabolic.
  \\
  \noindent
  (c) $\Rightarrow $ (d) Since $P$ is parabolic we can find a maximal
  torus $T\subset K$ and a system of simple roots in $\liet$ in such a
  way that $B_- \subset P$. So $B_-$ acts on $\OO'$ and by the Borel
  fixed point theorem $B_-$ has some fixed point $x\in \OO'$. Since
  $x$ is fixed by $T\subset B_-$, $x\in \liet$ and it follows from
  Lemma \ref{Borel} that $x\in \cchamber$.  If $E\subset \simple$ set
  \begin{gather*}
    \liu_E:= \bigoplus_{\alfa \in \root_- \setminus \root_E}
    \lieg_\alfa \qquad \liep_E:= \liet^\C \oplus\bigoplus_{\alfa \in
      \root_- \cup \setminus \root_E} \lieg_\alfa.
  \end{gather*}
  Then $ \liep_E = \lieh_E^\C \oplus \liu_E$ is a parabolic
  subalgebra.  Denote by $U_E$ and $P_E$ the corresponding connected
  subgroups of $G$.  Then $P_E$ is a parabolic subgroup, $U_E$ is its
  unipotent radical and $H_E^\C$ is a Levi factor. In particular $P_E
  = H_E^\C \cd U_E$ and $U_E \lhd P_E$.  Since $B_-\subset P$ there is
  some $E\subset \simple$ such that $P=P_E$.  Since $U_E \subset B_-
  \subset G_x$ we conclude that $\OO'=P_E\cd x = H_E^\C \cd x$. As
  $\OO'$ is compact, the compact form $H_E$ must be transitive on
  $\OO'$. This concludes the proof.
  \\
  \noindent
  (d) $\Rightarrow $ (a) First observe that $\OO'=H_E\cd x$ is a
  complex submanifold since it is a connected component of the fixed
  point set of the torus $T_E$.  Therefore $H_E^\C$ preserves $ \OO'$.
  By assumption there is $x\in \cchamber \cap \OO'$. By Lemma
  \ref{Borel} the stabilizer $G_x$ contains the negative Borel
  subgroup, so $U_E$ fixes $x$.  If $x'\in \OO'$, there is $a \in H_E$
  such that $x'=a\cd x $. If $b\in U_E$ then $a\meno ba \in U_E$, so
  $a \meno b a \cd x = x$ and $b \cd x' = b a \cd x = a\cd x =
  x'$. Hence $U_E$ fixes pointwise $\OO'$.  Therefore $P_E$ preserves
  $\OO'$ which is therefore a compact $P_E$-orbit.
\end{proof}

We notice that in condition (d) the set $E$ can be chosen to be the
$x$-saturation of the maximal $x$-connected subset $I\subset E$ as
shown in Theorem \ref {satakone} (c).

The above result establishes a one-to-one correspondence between two
rather distant classes of objects: on the one side the faces of the
orbitope $\c$, on the other side the closed orbits of parabolic
subgroups of $G$ inside $\OO$. To illustrate this correspondence
recall the following fact.

\begin{lemma}
  If $P \subset G$ is a parabolic subgroup, in $\OO$ there is only one
  orbit of $P$ which is closed.
\end{lemma}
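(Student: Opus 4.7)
The plan is to identify $\OO$ with a complex flag manifold $G/Q$ and then invoke the Borel fixed point theorem. Pick a base point $x \in \OO$ whose stabilizer $G_x =: Q$ in $G$ is a parabolic subgroup: since $\OO$ is a Kähler manifold on which $G$ acts transitively and holomorphically, and since the base point of the closed Weyl chamber has a stabilizer containing $B_-$ by Lemma \ref{Borel}, the complex isotropy is indeed parabolic. Hence $\OO \cong G/Q$ as a complex $G$-variety, and in particular $\OO$ is a complete variety.

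First I would prove \emph{uniqueness}. Fix a Borel subgroup $B \subset P$ (this exists because any parabolic contains Borels). Let $\OO' \subset \OO$ be any closed $P$-orbit; being a closed subset of the complete variety $\OO$, it is itself complete. By Borel's fixed point theorem applied to the solvable group $B$ acting on the complete variety $\OO'$, there is a $B$-fixed point $y \in \OO'$. The key observation is then that $B$ has a \emph{unique} fixed point on $G/Q$: if $gQ \in G/Q$ is $B$-fixed then $g^{-1}Bg \subset Q$, and since all Borels of $Q$ are $Q$-conjugate we can write $g^{-1}Bg = q^{-1}Bq$ for some $q\in Q$; then $gq^{-1}$ normalizes the self-normalizing Borel $B$, hence $gq^{-1}\in B\subset Q$, i.e.\ $gQ = eQ$. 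Thus every closed $P$-orbit must pass through this unique point $y_0 = eQ$, so $\OO' = P\cdot y_0$ is determined, proving that at most one closed $P$-orbit exists.

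For \emph{existence}, I would argue that there are only finitely many $P$-orbits on $G/Q$: both $P$ and $Q$ contain a common maximal torus (after conjugation), and the double coset description $P\backslash G/Q$ is finite by the Bruhat decomposition. Alternatively, one can start with any $P$-orbit $\OO''\subset \OO$, take its closure (a closed $P$-invariant subset of the compact $\OO$), and observe that the boundary $\overline{\OO''}\setminus \OO''$ is a closed $P$-invariant subset consisting of $P$-orbits of strictly smaller dimension; iterating, one arrives at a closed $P$-orbit. (By the uniqueness already proved, this orbit is $P\cdot y_0$.)

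The only mildly delicate step is verifying that the unique $B$-fixed point on $G/Q$ is indeed unique; but once one has the standard fact that all Borels of $Q$ are $Q$-conjugate and that $B$ is self-normalizing in $G$, the argument above dispatches it in a few lines. Everything else is a routine application of Borel's fixed point theorem plus the compactness of $\OO$.
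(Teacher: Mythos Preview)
Your proof is correct and follows essentially the same route as the paper: both reduce uniqueness to the fact that a Borel subgroup $B\subset P$ has a unique fixed point on $\OO\cong G/Q$, the paper invoking the Schubert cell decomposition while you invoke $Q$-conjugacy of Borels in $Q$ together with self-normalization of $B$. One small imprecision: in your uniqueness step you write $g^{-1}Bg = q^{-1}Bq$ with $q\in Q$, which tacitly assumes $B\subset Q$, whereas you only arranged $B\subset P$; this is harmless since you may simply choose the base point $x$ to be the (already existing) $B$-fixed point, so that $B\subset G_x=Q$, or alternatively compare two $B$-fixed points $g_1Q,g_2Q$ directly.
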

\begin{proof}
  Since the action is algebraic and $\OO$ is a compact manifold, there
  is at least one orbit which is closed. Let $\OO'\subset \OO$ be a
  closed $P$-orbit and let $B\subset P$ be a Borel subgroup. Then
  $\OO'$ is $B$-invariant, so it contains a closed $B$-orbit. But the
  $B$-orbits in $\OO$ are just the Schubert cells and the only one
  which is closed is the fixed point of $B$. Hence any closed
  $P$-orbit contains this fixed point and this implies that the closed
  $P$-orbit is unique.
\end{proof}

The above uniqueness statement can also be considered from the point
of view of the orbitope, as can be seen from the proof of the
implication (c) $\Rightarrow $ (d) in the previous theorem. Indeed, if
$P$ is a parabolic subgroup, we write it as $P=P_E$ for some $E\subset
\simple$.  Then there is a unique orbit of $H_E$ that is of the form
$\ext F$, namely the orbit $H_E\cd x$ for $x\in \OO \cap \cchamber$.
Alternatively this orbit can be described as follows: choose $u\in
\liet_E = \liez(\lieh_E)$ such that $-i\alfa(u) > 0 $ for $\alfa \in
E$. Then the closed $P$-orbit is $ \ml(\mom_u)$. In a sense to fix a
parabolic subgroup $P_E$ is equivalent to fixing $H_E$ and the vector
$u$. So once $P_E$ is fixed we know both $H_E$ and which component of
$\OO\cap \lieh_E$ corresponds to the maximum of $\mom_u$.

To conclude we wish to interpret geometrically condition (c) of
Theorem \ref{parabolique}. Let $\OO'$ be a complex submanifold of
$\OO$. Let $\hilbo$ denote the Hilbert scheme of the projective
manifold $\OO$. If $Y \subset \OO$ is a subscheme, let $[Y]$ be its
Hilbert point. (See e.g.  \cite[Chapter
IX]{arbarello-cornalba-griffiths-2}.)  The group $G$ acts on $\hilbo$
by sending the Hilbert point $[Y]$ of a subscheme $Y\subset \OO$ to
$[g\cd Y]$.

\begin{prop}
\label{cicli}
  Let $\OO' \subset \OO$ be a complex submanifold which is an orbit of
  some subgroup of $K$.  Let $f : G \ra \hilbo$ be the map $f(g) : =
  [g\cd \OO']$. Then the following conditions are all equivalent to
  condition (c) of Theorem \ref{parabolique}:
  \begin{enumerate}
  \item [i)] $f(G)$ is compact;
  \item [ii)] $f(K)$ is a subscheme of $\hilbo$;
  \item [iii)] $f(G) = f(K)$.
  \end{enumerate}
\end{prop}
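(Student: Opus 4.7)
The plan is to identify the map $f$ with the orbit map $g \mapsto g\cdot [\OO']$ into $\hilbo$ and to reduce each of (i), (ii), (iii) to condition (c) via the quotient description of $f(G)$. Under the standing hypothesis that $\OO'$ is a $K$-invariant complex submanifold which is an orbit of some subgroup $H' \subset K$, both the compactness of $\OO'$ and the transitivity of $P$ on $\OO'$ are automatic (the latter because $H'\subset K\cap P$), so condition (c) effectively reduces to the assertion that $P$ is parabolic in $G$.

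First I would observe that $f$ descends to a $G$-equivariant bijection $\bar{f}: G/P \to G\cdot [\OO']$, where $P$ is the stabilizer of $[\OO']$ defined in \eqref{def-P}. Since $[\OO']$ is a point of the projective scheme $\hilbo$ and the $G$-action is algebraic, $P$ is automatically Zariski closed, and $f(G) = G\cdot [\OO']$ carries the natural structure of a locally closed subscheme of $\hilbo$ isomorphic to $G/P$. This identification is the backbone of the argument.

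Next I would prove the cycle (c) $\Rightarrow$ (iii) $\Rightarrow$ (ii) $\Rightarrow$ (i) $\Rightarrow$ (c). For (c) $\Rightarrow$ (iii): when $P$ is parabolic, $K$ acts transitively on the flag variety $G/P$, equivalently $G = KP$, so $f(G) = KP\cdot [\OO'] = K\cdot [\OO'] = f(K)$. For (iii) $\Rightarrow$ (ii): the set $f(K)$ is compact as the continuous image of $K$, and under (iii) it coincides with the locally closed subscheme $f(G)$, which must therefore be closed in $\hilbo$, hence itself a subscheme. For (i) $\Rightarrow$ (c): if $f(G) = G \cdot [\OO']$ is compact, then it is closed in $\hilbo$, hence a projective subvariety; consequently $G/P$ is projective, which forces $P$ to be parabolic.

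The main obstacle is the implication (ii) $\Rightarrow$ (iii), which requires a complexification argument. Here I would proceed as follows: assuming $f(K)$ is a closed (reduced) subscheme of $\hilbo$, its setwise $G$-stabilizer $\{g \in G : g\cdot f(K) = f(K)\}$ is an algebraic subgroup of $G$, and by construction it contains $K$; since $K$ is Zariski-dense in its complexification $G$, this stabilizer must be all of $G$. Hence $f(K)$ is $G$-invariant, and since $[\OO'] \in f(K)$ we obtain $f(G) \subset f(K)$; the reverse inclusion is tautological, giving $f(K) = f(G)$ and closing the cycle.
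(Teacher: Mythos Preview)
Your proof is correct and mirrors the paper's approach: both identify $f(G)$ with the $G$-orbit $G\cdot[\OO']\cong G/P$, reduce condition (c) to parabolicity of $P$, and obtain transitivity of $P$ on $\OO'$ from the assumed subgroup $L\subset K$ with $L\subset P$. The paper simply asserts that (i)--(iii) are ``immediately'' equivalent to $P$ being parabolic without further justification, so your Zariski-density argument for (ii)$\Rightarrow$(iii) supplies a step the paper leaves implicit; together with the trivial (iii)$\Rightarrow$(i) this closes your stated cycle at the missing link (ii)$\Rightarrow$(i).
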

\begin{proof}
  $f(G)$ is just the orbit of $G$ through the point $p=[\OO'] \in
  \hilbo$, while $f(K)$ is the orbit of $K$ through $p$.  The subgroup
  $P$ defined in \eqref{def-P} is just the stabilizer $G_p$. Therefore
  $f(G) \cong G /P$. It follows immediately that the three conditions
  are equivalent to $P$ being parabolic, so they are implied by
  (c). Conversely, if they are satisfied, $P$ is parabolic. By
  assumption $\OO'$ is an orbit of some subgroup $L\subset K$. Then
  $L\subset P$ and $\OO'$ is a $P$-orbit, thus (c) holds.
\end{proof}

\begin{Example}
  Consider the orbitope of $\PP^2(\C)$ as described in Example
  \ref{exe1}.  The complex lines satisfy the conditions in the
  proposition and in fact they do generate faces of $\c$: if
  $\OO'\subset \PP^2(\C)$ is a line the set $\conv (\OO')$ is a face
  of $\c$. Also plane conics are complex submanifolds of $\PP^2(\C)$
  that are homogenous for a subgroup of $\Sl(3, \C)$, namely
  $\operatorname{SO}(3,\C)$. Nevertheless the orbit of $\Sl(3,\C)$
  through a conic is not compact since smooth conics degenerate to
  singular ones. So conics do no satisfy the conditions above and in
  fact conics do not generate faces of $\c$.
\end{Example}

\begin{Example}
  Let $L \subset K$ be the centralizer of a torus and let $\OO'
  \subset \OO$ be an orbit of $L$.  As we have shown in general the
  set $F = \conv (\OO')$ is not a face of $\c$. One condition is that
  $\OO'\subset \liel$. In fact if $L=Z_K(u)$, and $F=F_u(\c)$, then
  $\OO'= \ext F = \ml(\mom_u) \subset \Crit(\mom_u) = \OO\cap \liel$.
  This condition is not enough either.  In fact $\Crit(\mom_u)$ will
  contain at least two orbits, one for the maximum and one for the
  minimum. These are ''good'' orbits, in the sense that they
  correspond to faces, namely to $F_u(\c)$ and $F_{-u}(\c)$
  respectively.  The orbits in between in general do not generate
  faces.  Consider the following example. Let $\OO \subset \su(3) $ be
  the momentum image of the flag manifold of pairs $(L_1, L_2 ) $
  where $L_1 \subset L_2 \subset \C^3$ and $\dim L_i = i$.  Let
  $u=i\operatorname{diag} (1, 1 , -2)$.  Set $V=\C^2 \times \{0\}$.
  Then $ \Crit(\mom_u) $ has the following three connected components:
  \begin{gather*}
    C_1 = \{(L_1, L_2) \in \OO : L_1 \in \PP(V) , L_2 = L_1 \oplus \C e_3\}\\
    C_2 = \{(L_1, L_2) \in \OO : L_1 \subset  L_2 \subset V\}\\
    C_3 = \{(L_1, L_2) \in \OO : L_1 = \C e_3 \}.
  \end{gather*}
  Each component is an orbit of $Z_K(u) = S \left(U(2) \times U(1)
  \right )$. Let $P_i$ denote the stabilizer of $C_i$ for the action
  of $G=\Sl(3,\C)$.  Then $P_2 = \{ g \in \Sl(3, \C): g (V)=V\}$ and
  $P_3=\{ g : ge_3 = e_3\}$. These two subgroups are parabolic. So
  $C_2$ and $C_3$ correspond to faces, by Prop. \ref{parabolique}.  On
  the other hand we claim that $P_1$ is the subgroup of $\Sl(3, \C)$
  of matrices of the form
  \begin{gather*}
    g = \begin{pmatrix}
      A & 0 \\
      0& \la
    \end{pmatrix} \qquad A\in \Sl(2,\C), \la \in \C^*.
  \end{gather*}
  It is clear that matrices of this form lie in $P_1$. Conversely
  assume $g\in P_1$.  Then $g(V) = V$.  Write $ge_3= \la e_3 + w$ with
  $w\in V$. For any $v\in V \setminus \{0\}$ the plane $g \spam (v,
  e_3) = \spam (gv, ge_3) $ contains $e_3$. Hence $w \in \spam(gv,
  e_3)$. Since  $v \in V\setminus \{0\}$ is arbitrary it follows that $w=0$.
 The claim is proved, hence $P_1$ is not parabolic and $\conv (C_1)$ is not a face of $\c$.
\end{Example}

\section{The case of an integral orbit}
\label{sezione-caso-intero}

%
%
%

A coadjoint orbit $\OO \subset \liek$ is \emph{integral} if
$[\om]/2\pi$ lies in the image of the natural morphism $H^2(\OO,
\Zeta) \ra H^2(\OO, \R)$. (Here $\om$ is the Kostant-Kirillov-Souriau
form.)  If $\OO$ is integral there is a complex line bundle $L \ra
\OO$ such that $[\om]=2\pi \chern_1(L)$. This line bundle can be made
$K$-equivariant and holomorphic with respect to the structure $J$ on
$\OO$ and it supports a unique $K$-invariant Hermitian bundle metric
$h$ such that $\om=iR(h)$.  With this holomorphic structure the line
bundle $L$ turns out to be very ample. Set $V := (H^0(\OO, L)
)^*$. Then $V$ inherits from $\om$ and $h$ an $L^2$-scalar
product. Moreover $V$ is an irreducible representation of $K$ and
there is a unique orbit $M \subset \PP(V)$ which is a complex
submanifold of $\PP(V)$.  This orbit is simply connected.  Fix on $M$
the restriction of the Fubini-Study form gotten from the $L ^2$-scalar
product on $V$.  Since $K$ is semisimple there is a unique momentum
map $\mom : M \ra \liek$ and $\OO = \mom(M)$. Conversely, if there is
an irreducible $K$-representation $V$ such that $\OO=\mom (M)$ for the
unique complex orbit $M\subset \PP(V)$, then $\OO$ is integral.  This
follows from the fact that the momentum map $\mom: M \ra \OO$ is a
symplectomorphism.

Another way to express integrality of $\OO$ is the following.  Fix a
maximal torus $T\subset K$ and choose a point $x\in \OO \cap \liet$.
Recall that a linear functional $\lambda \in (i\liet)^*$ is an
\emph{algebraically integral weight} if
\begin{gather*}
  \frac{\sx \lambda, \alfa\xs }{|\alfa|^2} =
  \frac{\lambda(H_\alfa)}{|H_\alfa|^2} \in \Zeta
\end{gather*}
for any root $\alfa \in \roots(\liek^\C, \liet^\C)$, see e.g.
\cite[p. 265]{knapp-beyond}. 
Then $\OO$ is integral if and only if $\la= \sx ix, \cdot \xs $ is an
algebraically integral weight. (For all this see \cite[Chapter
1]{kirillov-lectures} or \cite{kostant-quantization}.)


\begin{teo}
  Let $\OO \subset \liek $ be an integral coadjoint orbit and let $F$
  be a face of $\c$. Write $\ext F = x_0 + K_F \cd x_1$ as in
  \eqref{extfx0}.  Denote by $\sx \, , \ \xs_F$ the scalar product on
  $\liek_F$ induced by the Killing form of $\liek_F$.  Define $x'_1
  \in \liek_F$ by the following rule:
  \begin{gather}
    \label{defxprimo}
    \sx x_1 ', y \xs_F = \sx x_1, y \xs
  \end{gather}
  for all $y\in \liek_F$.  Then $K_F \cd x_1'$ is an integral
  coadjoint orbit in $\liek_F$.
\end{teo}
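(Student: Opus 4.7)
The plan is to construct an explicit $K_F$-equivariant symplectomorphism between $\ext F \subset \OO$, equipped with the restricted symplectic form, and the coadjoint orbit $K_F \cd x_1' \subset \liek_F$, equipped with its KKS form; integrality of $\OO$ will then transfer to integrality of $K_F \cd x_1'$ by a topological argument.

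First I would fix a maximal torus $T \subset H_F$ so that $\ext F = x_0 + K_F \cd x_1$, with $x_0 \in \liez_F$ and $x_1 \in \liek_F$, as in Theorem \ref{enunciatone}. Since both $\sx\ ,\ \xs\restr{\liek_F \times \liek_F}$ and $\sx\ ,\ \xs_F$ are $K_F$-invariant nondegenerate symmetric bilinear forms on $\liek_F$, the map $\phi : \liek_F \ra \liek_F$, $y \mapsto y'$, determined by \eqref{defxprimo}, is a $K_F$-equivariant linear isomorphism. Composing with the translation $y \mapsto y + x_0$, I would then obtain a $K_F$-equivariant diffeomorphism $\Psi : \ext F \ra K_F \cd x_1'$ sending $x_0 + g \cd x_1$ to $g \cd x_1'$.

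The heart of the argument is the verification that $\Psi$ is a symplectomorphism. At a point $y = x_0 + g \cd x_1 \in \ext F$, every tangent vector has the form $[u, g \cd x_1]$ with $u \in \liek_F$, because $x_0$ is central in $\lieh_F$. For $u, v \in \liek_F$ one has $[u,v] \in \liek_F$, and the orthogonality $\liez_F \perp \liek_F$ in $(\liek, \sx\ ,\ \xs)$ --- which holds because $\liek_F = [\liek_F, \liek_F]$ is a semisimple ideal of the reductive algebra $\lieh_F$ --- kills the contribution of $x_0$ in the KKS formula for $\om$. Using \eqref{defxprimo} together with $K_F$-equivariance of $\phi$, one obtains
\begin{gather*}
\om_y ([u, g\cd x_1], [v, g\cd x_1]) = \sx g\cd x_1, [u,v] \xs = \sx g\cd x_1', [u,v] \xs_F,
\end{gather*}
and the right hand side is exactly the KKS form $\om'_{g\cd x_1'}$ on $K_F \cd x_1'$ evaluated on $d\Psi([u, g\cd x_1])$ and $d\Psi([v, g\cd x_1])$.

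Finally, $\OO$ being integral means $[\om]/2\pi$ lies in the image of $H^2(\OO, \Zeta) \ra H^2(\OO, \R)$; pulling back along $\ext F \hookrightarrow \OO$ shows that $[\om\restr{\ext F}]/2\pi$ is integral on $\ext F$, and transporting by the diffeomorphism $\Psi$ yields integrality of $[\om']/2\pi$ on $K_F \cd x_1'$. The main technical obstacle is the computation displayed above, which reduces entirely to the orthogonality $\liez_F \perp \liek_F$; everything else is functoriality of cohomology and the routine fact that a Hermitian line bundle representing an integer class on $\OO$ restricts to one on $\ext F$ which can then be pushed forward along $\Psi^{-1}$.
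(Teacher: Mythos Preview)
Your proof is correct and follows essentially the same route as the paper: both construct a $K_F$-equivariant diffeomorphism between $\ext F$ and $K_F\cd x_1'$ and verify, via the identity $\sx x_0,[u,v]\xs=0$ together with the defining relation \eqref{defxprimo}, that it matches the restricted KKS form on $\ext F$ with the KKS form on $K_F\cd x_1'$. The only cosmetic difference is that the paper writes the map in the opposite direction, as an embedding $j:K_F\cd x_1'\to\OO$, $g\cd x_1'\mapsto g\cd x$, and checks $j^*\om=\om_F$ at the single point $x_1'$ by equivariance; your introduction of the equivariant linear isomorphism $\phi$ makes well-definedness automatic, whereas the paper argues it separately via a comparison of stabilisers.
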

\begin{proof}
  This fact can be proved in a variety of ways using the various
  characterizations of integrality.  One simple way is using the
  definition, i.e. the condition on the integrality of the
  Kostant-Kirillov-Souriau form.  Let $\om_F$ be the KSS form of
  $K_F\cd x'_1 \subset \liek_F$.  Let $\mu \in \liek_F^*$ be the
  functional $\mu(y) = \sx x_1, y\xs = \sx x_1 ', y \xs_F $. The
  stabilizers (for the adjoint action) of $x_1 $ and $x_1' $ are the
  same, because both coincide with the stabilizer of $\mu$ (for the
  coadjoint action).  Moreover the stabilizers in $K_F$ of $x$ and of
  $x_1$ coincide since $x=x_0 + x_1$ and $x_0$ is fixed by
  $K_F$. Summing up we get that the stabilizers in $K_F$ of $x_1'$ and
  $x$ coincide.  Hence the map
  \begin{gather*}
    j: K_F\cd x'_1 \hookrightarrow \liek \qquad g \cd x'_1 \mapsto
    j(g\cd x'_1) := g\cd x
  \end{gather*}
  is an embedding of $K_F \cd x'_1$ onto $ \ext F = K_F \cd x \subset
  \liek$.  We claim that $j^*\om = \om_F$.  By equivariance it is
  enough to check that $j^*\om = \om_F$ at $x'_1$. Take $X, Y \in
  \liek_F$ and set $u=[X, x'_1], v= [Y, x'_1]$. Then
  \begin{gather*}
    dj_{x'_1}(u) = \desudtzero j \left (\Ad \left ( \exp tX \right )
      x'_1 \right ) = \desudtzero \left (\Ad \left ( \exp tX \right )
      x \right ) = [X, x]
  \end{gather*}
  and similarly $dj_{x'_1}(v) = [Y,x]$. Hence $ j^*\om (u,v) = \om (
  [X, x], [Y,x] ) = \sx x , [X,Y] \xs $. Since $[X,Y] \in \liek_F$ and
  $x_0 \in \liez_F$, $x_0 \perp [X,Y]$. Therefore $\sx x , [X,Y] \xs =
  \sx x_1, [X,Y] \xs = \sx x'_1, [X,Y] \xs_F = \om_F(u,v)$. This
  proves that indeed $\om_F= j^*\om$ and thus $[\om_F] /2\pi$ is
  integral if $[\om]/2\pi$ is.

\end{proof}

\begin{remark}
  Since the various definitions of integrality are equivalent, this
  theorem ensures that if $\sx i x, \cdot \xs $ is an integral weight,
  then $\sx i x_1 , \cdot \xs_F$ is integral as well. Since integral
  weights give rise to representations, to each face $F$ of an
  integral coadjoint orbitope is attached an irreducible
  representation of $K_F$. If one fixes root data and $F$ is the face
  corresponding to an $x$-connected subset $I \subset \Pi$ as in \S
  \ref{satake-section}, then the representation corresponding to $F$
  is the representaion $V_I$ originally described by Satake
  \cite[p. 89] {satake-compactifications} (see also
  \cite[p. 67]{borel-ji-libro}.
\end{remark}

\begin{remark}
  If $\OO$ is an integral orbit, then $\OO$ is the momentum image of a
  flag manifold $M$ provided with an invariant Hodge metric lying in a
  polarization $L \ra M$. The space $H^0(M, L)$ is an irreducible
  representation $\tau$ of $K$. Out of these data one can construct a
  Satake-Furstenberg compactification $\overline{X_\tau}^S$ of the
  symmetric space $K^\C /K$ and it is possible to define a
  homeomorphism (named after Bourguignon-Li-Yau) between this
  compactification and the orbitope $\c$. This was accomplished in
  \cite{biliotti-ghigi-2}. Since this homeomorphism respects the
  boundary structure, some properties of the faces of $\c$ can be
  deduced in this way. The arguments in the present paper apply also
  to the non-integral case, give much more information and are more
  direct and geometric, since no use is made of the Bourguignon-Li-Yau
  map.
\end{remark}

\def\cprime{$'$}

\end{document}